\newtheorem{teorema}{Theorem}[section]
\newtheorem*{theorem*}{Main Theorem}
\newtheorem{lemma}[teorema]{Lemma}
\theoremstyle{definition}
\newtheorem{rem}{Remark}[section]
\newtheorem{defin}[teorema]{Definition}
\def\R{{\mathbb R}}
\def\N{{\mathbb N}}
\def\C{{\mathbb C}}
\def\e{p_}
\def\p{\e}
\def\a{\alpha_}
\def\b{\beta}
\title[Toeplitz Operators and Skew Carleson measures for weighted Bergman spaces]
 {Toeplitz Operators and Skew Carleson measures for weighted Bergman spaces on strongly pseudoconvex domains} 
\author[Marco Abate]{Marco Abate$\dagger$}
\address{Marco Abate\\ Dipartimento di Matematica\\ Universit\`a di Pisa\\ Largo Pontecorvo 5, I-56127 Pisa\\ Italy.} \email{marco.abate@unipi.it}
\author[Samuele Mongodi]{Samuele Mongodi}
\address{Samuele Mongodi\\ Politecnico di Milano\\ Dipartimento di Matematica\\ Via Bonardi, 9 I-20133 Milano\\ Italy}
\email{samuele.mongodi@polimi.it}
\author[Jasmin Raissy]{Jasmin Raissy*}
\address{Jasmin Raissy\\ Institut de Math\'ematiques de Toulouse, UMR5219\\ Universit\'e de Toulouse, CNRS\\ UPS, F-31062 Toulouse Cedex 9\\ France}
\email{jraissy@math.univ-toulouse.fr}
\thanks{2010 Mathematics Subject Classification: 32A36 (primary), 32A25, 32Q45, 32T15, 46E22, 46E15, 47B35 (secondary).}
\thanks{\textit{Keywords:} Carleson measure; Toeplitz operator; strongly pseudoconvex domain; weighted Bergman space}
\thanks{$\dagger$ Partially supported by PRA grant ``Sistemi dinamici in analisi, geometria, logica e meccanica celeste", University of Pisa.}
\thanks{$^{*}$Partially supported by the FIRB2012 grant ``Differential Geometry and Geometric Function Theory'', RBFR12W1AQ 002.}
\begin{document}

\begin{abstract}
In this paper we study mapping properties of Toeplitz-like operators on weighted Berg\-man spaces of bounded strongly pseudconvex domains in $\C^n$. In particular we prove that a Toeplitz operator built using as kernel a weighted Bergman kernel of weight $\beta$ and integrating against a measure $\mu$ maps continuously (when $\beta$ is large enough) a weighted Bergman space $A^{p_1}_{\alpha_1}(D)$ into a weighted Bergman space $A^{p_2}_{\alpha_2}(D)$ if and only if $\mu$ is a $(\lambda,\gamma)$-skew Carleson measure, where $\lambda=1+\frac{1}{p_1}-\frac{1}{p_2}$ and $\gamma=\frac{1}{\lambda}\left(\beta+\frac{\alpha_1}{p_1}-\frac{\alpha_2}{p_2}\right)$. This theorem generalizes results obtained by Pau and Zhao on the unit ball, and extends and makes more precise results obtained by Abate, Raissy and Saracco on a smaller class of Toeplitz operators on bounded strongly pseudoconvex domains.
\end{abstract}

\maketitle

\section{Introduction}

Carleson measures are a powerful tool and an interesting object to study, introduced by Carleson \cite{C} in his celebrated solution of the corona problem. Let $A$ be a (usually) Banach space of holomorphic functions on a domain $D\subset\C^n$; given $p >0$, a finite positive Borel measure $\mu$ on~$D$ is a \emph{Carleson measure} for~$A$ and~$p$ if there is a continuous inclusion $A\hookrightarrow L^p(\mu)$, that is, if there exists a constant $C>0$ such that
\[
\forall f\in A \qquad \int_D |f|^p\,d\mu\le C\|f\|_A^p\;.
\]
We shall also say that $\mu$ is a \emph{vanishing Carleson measure} for~$A$ and~$p$ if the inclusion $A\hookrightarrow L^p(\mu)$ is compact.

In this paper we are interested in Carleson measures for weighted Bergman spaces~$A^p_\beta(D)$, that is spaces of holomorphic functions on a domain $D\Subset\mathbb{C}^n$ which are $p$-integrable with respect to the measure~$\delta^\beta\nu$, where $\nu$ is the Lebesgue measure, $\delta$ is the Euclidean distance from the boundary of $D$ and $\beta\in\mathbb{R}$; we shall denote by $A^p(D)$ the (unweighted) Bergman space $A^p_0(D)$.

Carleson measures for (possibly weighted) Bergman spaces have been studied by several authors, including Hastings \cite{H}, Oleinik and Pavlov \cite{OP}, Oleinik \cite{O} and Luecking~\cite{Luecking} for the unit disk $\Delta\subset\mathbb{C}$; Cima and Wogen \cite{CW}, Duren and Weir \cite{DW}, Zhu~\cite{Zh} and Kaptano\u glu~\cite{Ka} for the unit ball $B^n\subset\C^n$; Zhu \cite{Zh1} for bounded symmetric domains; Cima and Mercer \cite{CM}, Abate and Saracco \cite{AbaSar}, Abate, Raissy and Saracco \cite{AbaRaiSar}, Hu, Lv and Zhu \cite{HuLvZhu} and Abate and Raissy~\cite{AbaRai} for strongly pseudoconvex domains.

One of the reasons of the interest for Carleson measures is that they can be characterized in several different ways, even without any reference to function spaces. A particularly important characterization relies on the intrinsic Kobayashi geometry of the domain $D\Subset\mathbb{C}^n$. Given $z_0\in D$ and $r\in(0,1)$, let $B_D(z,r)$ denote the Kobayashi ball of $D$ with center $z_0$ and radius $\frac{1}{2}\log\frac{1+r}{1-r}$. If $\mu$ is a finite positive Borel measure on~$D$, for any $r\in(0,1)$ and $\theta\in\mathbb{R}$ we can compare the $\mu$-measure and the Lebesgue measure of the Kobayashi balls by using the functions
\[
\hat\mu_{r,\theta}(z)=\frac{\mu\bigl(B_D(z,r)\bigr)}{\nu\bigl(B_D(z,r)\bigr)^\theta}\;.
\]
It turns out that the behavior of $\hat\mu_{r,\theta}$ can be used to decide whether $\mu$ is Carleson for a given weighted Bergman space. Indeed we have the following statement:

\begin{teorema}[Abate-Raissy-Saracco~\cite{AbaRaiSar}, Hu-Lv-Zhu~\cite{HuLvZhu}]
\label{th:1.1}
Let $D\Subset\C^n$ be a bounded strongly pseudoconvex smooth domain and $\mu$ a finite positive Borel measure on $D$. Choose $0< p$, $q<+\infty$ and $\alpha>-1$, and denote by $\delta\colon D\to\mathbb{R}^+$ the Euclidean distance from the boundary of $D$. Then:
\begin{itemize}
\item[\textrm{(i)}] if $p\le q$, then $\mu$ is a Carleson measure for $A^p_\alpha(D)$ and $q$ if and only if $\hat\mu_{r,q/p}\delta^{-\alpha q/p}\in L^\infty(D)$ for some (and hence any) $r\in(0,1)$;
\item[\textrm{(ii)}] if $p>q$, then $\mu$ is a Carleson measure for $A^p_\alpha(D)$ and $q$ if and only if $\hat\mu_{r,1}\delta^{-\alpha q/p}\in L^{\frac{p}{p-q}}(D)$ for some (and hence any) $r\in(0,1)$.
\end{itemize}
\end{teorema}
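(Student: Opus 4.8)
The plan is to reduce both statements to purely geometric estimates on Kobayashi balls, exploiting three facts about a bounded strongly pseudoconvex smooth domain $D$: that $\nu(B_D(z,r))\asymp\delta(z)^{n+1}$ with $\delta$ comparable to $\delta(z)$ throughout $B_D(z,r)$; that $D$ admits, for every $r\in(0,1)$, a countable covering by Kobayashi balls $\{B_D(z_k,r)\}$ of uniformly bounded overlap; and that functions in $\Hol(D)$ satisfy a weighted sub-mean value inequality, namely for $R>r$, $z\in D$ and $w\in B_D(z,r)$,
\[
|f(w)|^p\lesssim\frac{1}{\nu(B_D(z,R))\,\delta(z)^\alpha}\int_{B_D(z,R)}|f|^p\,\delta^\alpha\,d\nu .
\]
These facts also reduce the $r$-dependence to comparing different coverings, so that the characterization for one $r$ and for any other become equivalent.

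For the sufficiency direction I would cover $D$ by finite-overlap balls, bound $\int_{B_D(z_k,r)}|f|^q\,d\mu\le\mu(B_D(z_k,r))\sup_{B_D(z_k,r)}|f|^q$, and insert the sub-mean value estimate to obtain
\[
\int_D|f|^q\,d\mu\lesssim\sum_k\mu(B_D(z_k,r))\left(\frac{1}{\nu(B_D(z_k,R))\delta(z_k)^\alpha}\int_{B_D(z_k,R)}|f|^p\delta^\alpha\,d\nu\right)^{q/p}.
\]
Writing $b_k=\int_{B_D(z_k,R)}|f|^p\delta^\alpha\,d\nu$, bounded overlap gives $\sum_kb_k\lesssim\|f\|_{A^p_\alpha}^p$. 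In case (i), where $q/p\ge1$, the hypothesis $\hat\mu_{r,q/p}\delta^{-\alpha q/p}\in L^\infty$ bounds each coefficient of $b_k^{q/p}$ by a constant and the embedding $\ell^1\hookrightarrow\ell^{q/p}$ finishes the estimate. In case (ii), where $q/p<1$, I would instead apply Hölder with conjugate exponents $p/q$ and $p/(p-q)$: the factor $(\sum_kb_k)^{q/p}$ produces $\|f\|_{A^p_\alpha}^q$, while the remaining factor, after using $\nu(B_D(z_k,\cdot))\asymp\delta(z_k)^{n+1}$ to convert the weighted sum over the covering into an integral, equals a constant multiple of $\int_D(\hat\mu_{r,1}\delta^{-\alpha q/p})^{p/(p-q)}\,d\nu$, finite by hypothesis.

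For necessity I would test the Carleson inequality against normalized reproducing-kernel functions $f_z=K_\alpha(\cdot,z)/\|K_\alpha(\cdot,z)\|_{A^p_\alpha}$ (or suitable powers), using the estimates $K_\alpha(z,z)\asymp\delta(z)^{-(n+1+\alpha)}$ and $|K_\alpha(w,z)|\asymp K_\alpha(z,z)$ for $w\in B_D(z,r)$; these give $\|f_z\|_{A^p_\alpha}\asymp1$ and a lower bound $|f_z|^q\gtrsim(\nu(B_D(z,r))\delta(z)^\alpha)^{-q/p}$ on $B_D(z,r)$. In case (i) a single such test function at each $z$ turns the Carleson bound directly into $\mu(B_D(z,r))\lesssim\nu(B_D(z,r))^{q/p}\delta(z)^{\alpha q/p}$, i.e. the $L^\infty$ condition.

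Case (ii) necessity is where the real work lies and will be the main obstacle. Following Luecking's technique, I would fix a separated lattice $\{z_k\}$ and apply the Carleson inequality to the randomized sums $f_t=\sum_k c_k\,r_k(t)\,f_{z_k}$, where the $r_k$ are Rademacher functions; averaging in $t$ and invoking Khinchine's inequality on both sides replaces the randomized sums by the square functions $(\sum_k|c_k|^2|f_{z_k}|^2)^{1/2}$, so that (after Hölder to pass from the $p$-average to the $q$-average on the Bergman side and localization of each $f_{z_k}$ to $B_D(z_k,r)$ on the measure side) the inequality becomes
\[
\sum_k|c_k|^q\,\mu(B_D(z_k,r))\bigl(\nu(B_D(z_k,r))\delta(z_k)^\alpha\bigr)^{-q/p}\lesssim\Bigl(\sum_k|c_k|^p\Bigr)^{q/p},
\]
valid for every sequence $(c_k)$. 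By the duality between $\ell^{p/q}$ and $\ell^{p/(p-q)}$ this is precisely the statement that $\mu(B_D(z_k,r))\nu(B_D(z_k,r))^{-q/p}\delta(z_k)^{-\alpha q/p}$ lies in $\ell^{p/(p-q)}$, which, converting the weighted sum back into an integral, is the condition $\hat\mu_{r,1}\delta^{-\alpha q/p}\in L^{p/(p-q)}$. The steps I expect to require the most care are the justification of the two-sided Khinchine passage uniformly in the weight $\delta^\alpha$, and the localization lemma bounding $\int_D(\sum_k|c_k|^2|f_{z_k}|^2)^{p/2}\delta^\alpha\,d\nu$ by $\sum_k|c_k|^p$ through the bounded-overlap geometry.
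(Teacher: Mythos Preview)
The paper does not give its own proof of this statement: Theorem~1.1 is quoted from \cite{AbaRaiSar} and \cite{HuLvZhu} in the introduction, and the same content is restated in Section~2 as Theorems~2.8 and~2.9 (labeled \texttt{carthetaCarluno} and \texttt{carthetaCarldue}), again with citations and no proof. So there is nothing in the present paper to compare your argument against directly.

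That said, your outline is essentially the argument carried out in the cited references, and the same machinery (sub-mean-value on Kobayashi balls, $r$-lattices with bounded overlap, kernel test functions, and the Rademacher--Khinchine trick) is deployed in this paper in the proof of Theorem~3.1. One point deserves correction: in case~(ii) necessity you describe ``invoking Khinchine's inequality on both sides''. In the standard argument Khinchine is applied only on the $L^q(\mu)$ side, after integrating the Carleson inequality in the Rademacher variable~$t$. On the Bergman side one needs instead the uniform bound $\|f_t\|_{p,\alpha}\preceq\|\mathbf{c}\|_p$ for \emph{every} $t$, which is exactly the atomic-decomposition estimate you flag at the end (in this paper it is Lemma~2.7). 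So the ``localization lemma'' you identify as a concern is not a Khinchine-type step at all but the key a~priori input on the source space; once you have it, the rest of your duality argument goes through. The passage from the $\ell^{p/(p-q)}$ condition on the lattice back to the integral condition $\hat\mu_{r,1}\delta^{-\alpha q/p}\in L^{p/(p-q)}(D)$ also needs a short geometric argument (as in \cite{HuLvZhu}*{Lemma~2.5}), which you gesture at but should make explicit.
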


In view of this theorem it is natural to say that a measure $\mu$ is a \emph{$(\lambda,\alpha)$-skew Carleson measure} if $\lambda\ge 1$ and $\hat\mu_{r,\lambda}\delta^{-\alpha\lambda}\in L^\infty(D)$, or if $\lambda<1$ and $\hat\mu_{r,1}\delta^{-\alpha\lambda}\in L^{\frac{1}{1-\lambda}}(D)$. When $\lambda=1$ (i.e., $p=q$) we shall say that $\mu$ is a $\alpha$-Carleson measure.

Other characterizations can be given in terms of $r$-lattices and of the Berezin transform of the measure $\mu$ (see Section~2 of this paper for details); but here we are interested in a different kind of characterization, an application of Carleson measures to mapping properties of Toeplitz operators.

Roughly speaking, a Toeplitz operator is the composition of a projection and a multiplication. More precisely, if $X$ is a Banach algebra, $Y\subset X$ a Banach subspace, $P\colon X\to Y$ a linear projection and $f\in X$, then the \emph{Toeplitz operator}~$T_f$ of \emph{symbol}~$f$ is given by $T_f(g)=P(fg)$.

In complex analysis, the most important projection is the \emph{Bergman projection}~$B$, which is the orthogonal projection of the space $L^2(D)$ onto the (unweighted) Bergman space $A^2(D)$, where $D\Subset\mathbb{C}^n$ is a bounded domain. The Bergman projection is an integral operator of the form
\[
Bf(z)=\int_D K(z,w)f(w)\,d\nu(w)\;,
\]
where $K\colon D\times D\to\C$ is the \emph{Bergman kernel} of~$D$. It turns out that the Bergman projection can be extended to $L^p(D)$ for all $p>0$ and maps $L^p(D)$ into $A^p(D)$. \v Cu\v ckovi\'c and McNeal \cite{CMc} suggested to study the mapping properties of Toeplitz operators, associated to the Bergman projection, of the form
\[
T_{\delta^\beta}f(z)=\int_D K(z,w)f(w)\delta(w)^\beta\,d\nu(w)\;;
\]
in particular they were interested in determining for which values of $\beta\in\R$ the operator $T_{\delta^\beta}$ would map a Bergman space $A^p(D)$ into a Bergman space $A^q(D)$. In the paper \cite{AbaRaiSar} we realized that to properly address \v Cu\v ckovi\'c and McNeal's questions it is useful to consider the larger class of Toeplitz operators associated to measures. If $\mu$ is a finite positive Borel measure on $D$ then the \emph{Toeplitz operator} of \emph{symbol} $\mu$ is given by
\[
T_\mu f(z)=\int_D K(z,w)f(w)\,d\mu(w)\;;
\]
clearly, the Toeplitz operator $T_{\delta^\beta}$ considered by \v Cu\v ckovi\'c and McNeal is the Topelitz operator of symbol the measure $\delta^\beta\nu$. Toeplitz operators with a measure as symbol have been studied, for instance, by Kaptano\u glu~\cite{Ka} on the unit ball of $\C^n$, by Li~\cite{Li} and Li and Lueckling~\cite{LiLu} in strongly pseudoconvex domains, and by Schuster and Varolin~\cite{ScVa} in the setting of weighted Bargmann-Fock spaces on $\C^n$; they already noticed relationships between their mapping properties and Carleson properties of $\mu$.

In \cite{AbaRaiSar} we performed a detailed study of how Carleson properties of $\mu$ were related to mapping properties of $T_\mu$, proving results like the following:

\begin{teorema}[Abate-Raissy-Saracco \cite{AbaRaiSar}]
\label{th:1.2}
Let $D\Subset\C^n$ be a bounded strongly pseudoconvex smooth domain, $\mu$ a finite positive Borel measure on $D$ and take $1< p<q<+\infty$. Then the following assertions are equivalent:
\begin{itemize}
\item[\textrm{(i)}] $T_\mu\colon A^p(D)\to A^q(D)$ continuously;
\item[\textrm{(ii)}] $\mu$ is a $\left(1+\frac{1}{p}-\frac{1}{q},0\right)$-skew Carleson measure.
\end{itemize}
\end{teorema}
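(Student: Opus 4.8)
The plan is to reduce the continuity of $T_\mu$ to a bilinear estimate and then to read that estimate off the Carleson embeddings provided by Theorem~\ref{th:1.1}. Put $q'=q/(q-1)$; since $p<q$ the exponent $\lambda=1+\frac1p-\frac1q$ equals $\frac1p+\frac1{q'}$ and satisfies $\lambda>1$, so being a $(\lambda,0)$-skew Carleson measure means exactly $\hat\mu_{r,\lambda}\in L^\infty(D)$. For $f\in A^p(D)$ and $g\in A^{q'}(D)$, writing out $T_\mu f$, applying Fubini and using $\overline{K(z,w)}=K(w,z)$ together with the reproducing property $\int_D K(w,z)g(z)\,d\nu(z)=g(w)$ yield the basic identity
\[
\langle T_\mu f,g\rangle=\int_D f\,\overline g\,d\mu .
\]
Since $\bigl(A^q(D)\bigr)^*\cong A^{q'}(D)$ under the $A^2$-pairing for $1<q<\infty$, the continuity in (i) is equivalent to the bilinear bound $\bigl|\int_D f\,\overline g\,d\mu\bigr|\le C\|f\|_{A^p}\|g\|_{A^{q'}}$ for all such $f,g$.

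For (ii)$\Rightarrow$(i) I would split the integral by H\"older with exponents $s_1=p\lambda$ and $s_2=q'\lambda$; these are conjugate because $\frac1{s_1}+\frac1{s_2}=\frac1\lambda\bigl(\frac1p+\frac1{q'}\bigr)=1$, and $\lambda>1$ gives $s_1>p$, $s_2>q'$. Hence Theorem~\ref{th:1.1}(i) with $\alpha=0$ shows that the single assumption $\hat\mu_{r,\lambda}\in L^\infty(D)$ delivers both Carleson embeddings $A^p(D)\hookrightarrow L^{s_1}(\mu)$ and $A^{q'}(D)\hookrightarrow L^{s_2}(\mu)$, since in each case the relevant ratio of exponents equals $\lambda$. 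H\"older's inequality then gives
\[
\int_D|f||g|\,d\mu\le\|f\|_{L^{s_1}(\mu)}\|g\|_{L^{s_2}(\mu)}\le C\|f\|_{A^p}\|g\|_{A^{q'}},
\]
which is the sought bilinear bound.

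For (i)$\Rightarrow$(ii) I would feed the Bergman kernel into the bilinear bound. Fixing $w\in D$ and taking $f=g=K(\cdot,w)$, which lie in $A^p(D)\cap A^{q'}(D)$, I would use the strongly pseudoconvex size estimates $|K(z,w)|\approx\delta(w)^{-(n+1)}$ for $z\in B_D(w,r)$ and $\|K(\cdot,w)\|_{A^t}\approx\delta(w)^{(n+1)(1/t-1)}$. The left-hand side $\int_D|K(z,w)|^2\,d\mu$ is then bounded below by $c\,\delta(w)^{-2(n+1)}\mu\bigl(B_D(w,r)\bigr)$, while the right-hand side is at most $C\,\delta(w)^{(n+1)(1/p+1/q'-2)}$. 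Rearranging and using $\nu\bigl(B_D(w,r)\bigr)\approx\delta(w)^{n+1}$ gives
\[
\mu\bigl(B_D(w,r)\bigr)\le C\,\delta(w)^{(n+1)\lambda}\approx C\,\nu\bigl(B_D(w,r)\bigr)^{\lambda}
\]
uniformly in $w$, that is $\hat\mu_{r,\lambda}\in L^\infty(D)$.

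The step I expect to be most delicate is the lower bound $|K(z,w)|\gtrsim\delta(w)^{-(n+1)}$ on the Kobayashi ball $B_D(w,r)$: the Bergman kernel may oscillate, so it cannot be used naively, and one must instead invoke the comparability $|K(z,w)|\approx K(w,w)^{1/2}K(z,z)^{1/2}$ valid for $z$ Kobayashi-close to $w$ on a strongly pseudoconvex domain, combined with $\delta(z)\approx\delta(w)$ on $B_D(w,r)$. The remaining points, namely the use of Fubini in the basic identity and the passage through the $A^q$--$A^{q'}$ duality, are routine once integrability is secured by the Carleson embeddings, and can be handled first on a dense subclass of bounded holomorphic functions.
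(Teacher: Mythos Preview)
Your argument is correct and follows the same overall architecture as the paper's (which obtains Theorem~\ref{th:1.2} as the special case $\alpha_1=\alpha_2=\beta=0$, $p_1=p$, $p_2=q$ of Theorem~\ref{th:1.3}): duality reduces the continuity of $T_\mu$ to the bilinear bound $\bigl|\int_D f\overline g\,d\mu\bigr|\preceq\|f\|_{p}\|g\|_{q'}$, and testing with $f=g=K(\cdot,w)$ together with the lower bound for $|K(z,w)|$ on Kobayashi balls (this is precisely Lemma~\ref{piu}, valid for $w$ close enough to $\partial D$; the remaining $w$ are handled trivially since $\mu$ is finite) recovers the geometric condition $\mu\bigl(B_D(w,r)\bigr)\preceq\delta(w)^{(n+1)\lambda}$.

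The one genuine difference is in (ii)$\Rightarrow$(i). The paper's proof of Theorem~\ref{th:1.3} appeals to the product Carleson theorem (Theorem~\ref{thm:1.1 AbaRai}) as a black box to get the bilinear estimate. Your H\"older split with conjugate exponents $s_1=p\lambda$ and $s_2=q'\lambda$ instead reduces the bilinear bound to two \emph{single} Carleson embeddings, each with ratio $\lambda$, so that Theorem~\ref{th:1.1}(i) alone suffices. This is more elementary and self-contained in the unweighted setting, and in effect re-proves the $k=2$, $q_1=q_2=1$ case of Theorem~\ref{thm:1.1 AbaRai} by hand. For (i)$\Rightarrow$(ii) the paper bounds $T_\mu K(\cdot,a)(a)$ from above via the plurisubharmonic submean inequality (Lemma~\ref{due}) rather than via the duality pairing you use, but the two routes give the same inequality.
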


In proving this theorem we realized that the natural setting to study the mapping properties of Toeplitz operators of this kind is given by weighted Bergman spaces, and we obtained several results showing that if $T_\mu$ maps a weighted Bergman space into another weighted Bergman space then $\mu$ is $(\lambda,\alpha)$-skew Carleson for suitable $\lambda$ and $\alpha$, and conversely that if $\mu$ is $(\lambda,\alpha)$-skew Carleson then $T_\mu$ maps a suitable weighted Bergman space into another suitable weighted Bergman space. Unfortunately, we got only a few clean ``if and only if" statements; moreover, we were mainly interested in mapping spaces $A^p_\alpha(D)$ in spaces $A^q_\beta(D)$ with $q\ge p$, and we did not discuss the case $p>q$.

\smallskip 

This paper is devoted to prove instead a neat and general ``if and only if" statement, following ideas introduced by Pau and Zhao \cite{PZ} in the unit ball.  To do so we proceed by further enlarging the class of Toeplitz operators we are considering. Given $\beta>-1$, the orthogonal projection $P_\beta\colon L^2(\delta^\beta\nu)\to A^2_\beta(D)$ is still represented by an integral operator of the form
\[
P_\beta f(z)=\int_D K_\beta(z,w)f(w)\delta(w)^\beta\,d\nu(w)\;,
\]
where the \emph{weighted Bergman kernel} $K_\beta\colon D\times D\to\C$ has properties similar to those of the usual Bergman kernel (see Section 2). The \emph{Toeplitz operator}~$T_\mu^\beta$ of \emph{symbol}~$\mu$ and \emph{exponent}~$\beta$ is given by
\[
T^\beta_\mu f(z)=\int_D K_\beta(z,w)f(w)\,d\mu(w)\;.
\]
Then the main result of this paper is the following:

\begin{teorema}
\label{th:1.3}
Let $D\Subset\C^n$ be a bounded strongly pseudoconvex smooth domain.
Let $0<p_1$,~$p_2<+\infty$ and $-1<\alpha_1$,~$\alpha_2<+\infty$. Suppose that $\beta\in\R$ satisfies
\[
n+1+\beta>n\max\left\{1,\dfrac{1}{p_j}\right\}+\frac{1+\alpha_j}{p_j}
\]
for $j=1$, $2$.
Put
$$
\lambda=1+\frac{1}{p_1}-\frac{1}{p_2}
$$
and, if $\lambda\ne 0$, put
$$
\gamma=\frac{1}{\lambda}\left(\beta+\frac{\alpha_1}{p_1}-\frac{\alpha_2}{p_2}\right)\;.
$$
Then, for any finite positive Borel measure $\mu$ on $D$, the following statements are equivalent:
\begin{itemize}
\item[\textrm{(i)}] $T_\mu^\beta\colon A^{p_1}_{\alpha_1}(D)\to A^{p_2}_{\alpha_2}(D)$ continuously;
\item[\textrm{(ii)}] the measure $\mu$ is a $(\lambda,\gamma)$-skew Carleson measure.
\end{itemize}
\end{teorema}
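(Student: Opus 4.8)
The plan is to reduce the operator bound to a bilinear ``Carleson embedding'' estimate and then to read off the skew Carleson condition from a discretization over a Kobayashi $r$-lattice. The identity behind the whole proof is that, by Fubini's theorem, the Hermitian symmetry $K_\beta(z,w)=\overline{K_\beta(w,z)}$, and the reproducing property $P_\beta g=g$, one has
\[
\langle T^\beta_\mu f,g\rangle_\beta=\int_D f\,\overline{g}\,d\mu
\]
for $f,g$ in suitable dense classes, where $\langle\cdot,\cdot\rangle_\beta$ is the pairing $\int_D f\overline{g}\,\delta^\beta\,d\nu$. When $p_2>1$ the dual of $A^{p_2}_{\alpha_2}(D)$ under this pairing is $A^{p_2'}_{\alpha_2'}(D)$ with $1/p_2'=1-1/p_2$ and $\alpha_2'/p_2'=\beta-\alpha_2/p_2$; a direct check shows that the standing hypothesis on $\beta$ forces $\alpha_2'>-1$, so this really is a weighted Bergman space. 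Thus for $p_2>1$ statement (i) is equivalent to
\[
\Bigl|\int_D f\,\overline{g}\,d\mu\Bigr|\lesssim \|f\|_{A^{p_1}_{\alpha_1}}\,\|g\|_{A^{p_2'}_{\alpha_2'}}\;.
\]
For $p_2\le1$ duality is unavailable and I would instead estimate $\|T^\beta_\mu f\|_{A^{p_2}_{\alpha_2}}$ directly through its discretized norm; the extra strength of the hypothesis $n+1+\beta>n/p_2+(1+\alpha_2)/p_2$ is exactly what keeps the relevant kernel integrals finite in that regime.

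The geometric engine is a Kobayashi $r$-lattice $\{a_k\}$ together with the facts (recalled in Section~2) that $\nu(B_D(z,r))\approx\delta(z)^{n+1}$, that $\delta$ and $K_\beta$ are comparable to constants on each ball, and the submean estimate $|h(a_k)|^p\lesssim \nu(B_D(a_k,r))^{-1}\int_{B_D(a_k,2r)}|h|^p\,d\nu$ for holomorphic $h$. For (ii)$\Rightarrow$(i) I would discretize the $\mu$-integral and replace $|f|,|g|$ by their values at the centres, reducing the bilinear estimate to a bound on $\sum_k |f(a_k)|\,|g(a_k)|\,\mu(B_D(a_k,r))$. Setting $A_k=|f(a_k)|\delta(a_k)^{(n+1+\alpha_1)/p_1}$ and $B_k=|g(a_k)|\delta(a_k)^{(n+1+\alpha_2')/p_2'}$, the discretized norms are $\|A\|_{\ell^{p_1}}\approx\|f\|$ and $\|B\|_{\ell^{p_2'}}\approx\|g\|$, and the crucial identity
\[
\frac{n+1+\alpha_1}{p_1}+\frac{n+1+\alpha_2'}{p_2'}=(n+1)\lambda+\gamma\lambda
\]
(which is precisely where the definitions of $\lambda$ and $\gamma$ are used) makes all powers of $\delta(a_k)$ collapse. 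When $\lambda\ge1$ they cancel completely and the estimate follows from Hölder together with the embedding $\ell^{p_2'}\hookrightarrow\ell^{p_1'}$ (valid since $p_1\le p_2$), using $\hat\mu_{r,\lambda}\delta^{-\gamma\lambda}\in L^\infty$. When $\lambda<1$ a residual factor $\delta(a_k)^{(n+1)(1-\lambda)}$ survives, and I would apply the three-exponent Hölder inequality with exponents $p_1$, $p_2'$ and $1/(1-\lambda)$ — note that $1/p_1+1/p_2'+(1-\lambda)=1$ — the last factor producing exactly $\bigl(\sum_k(\hat\mu_{r,1}(a_k)\delta(a_k)^{-\gamma\lambda})^{1/(1-\lambda)}\nu(B_D(a_k,r))\bigr)^{1-\lambda}$, finite by the $L^{1/(1-\lambda)}$ half of the skew Carleson condition.

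For the converse (i)$\Rightarrow$(ii) I would test. When $\lambda\ge1$, plug the reproducing kernels $f=K_\beta(\cdot,a)\delta(a)^{c_1}$ and $g=K_\beta(\cdot,a)\delta(a)^{c_2}$, normalized in $A^{p_1}_{\alpha_1}$ and $A^{p_2'}_{\alpha_2'}$ respectively, into the bilinear estimate; bounding $\int_D|K_\beta(\cdot,a)|^2\,d\mu$ from below by its integral over $B_D(a,r)$, where $|K_\beta(z,a)|\gtrsim\delta(a)^{-(n+1+\beta)}$, and using the same identity yields $\mu(B_D(a,r))\lesssim\delta(a)^{\gamma\lambda}\nu(B_D(a,r))^{\lambda}$, i.e. $\hat\mu_{r,\lambda}\delta^{-\gamma\lambda}\in L^\infty$. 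When $\lambda<1$ one cannot detect the $L^{1/(1-\lambda)}$ integrability from a single kernel, so I would test on random lattice sums $f_t=\sum_k a_k\varepsilon_k(t)u_k$ and $g_t=\sum_k b_k\varepsilon_k(t)v_k$, with $u_k,v_k$ normalized kernels and $\varepsilon_k(t)$ Rademacher signs. Averaging $|\int_D f_t\overline{g_t}\,d\mu|\lesssim\|f_t\|\,\|g_t\|$ over $t$ and using Khinchine's inequality collapses the double sum to the diagonal $\sum_k a_k\overline{b_k}\int_D u_k\overline{v_k}\,d\mu$ and controls the randomized norms by $\|(a_k)\|_{\ell^{p_1}}$ and $\|(b_k)\|_{\ell^{p_2'}}$, the off-diagonal kernel interactions being absorbed by the decay of $K_\beta$ away from the diagonal. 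Since $\int_D u_k\overline{v_k}\,d\mu\gtrsim \delta(a_k)^{(n+1)(1-\lambda)}\hat\mu_{r,1}(a_k)\delta(a_k)^{-\gamma\lambda}$, the boundedness of this diagonal form on $\ell^{p_1}\times\ell^{p_2'}$, via the factorization $\ell^{p_1}\cdot\ell^{p_2'}=\ell^{1/\lambda}$ and $(\ell^{1/\lambda})^*=\ell^{1/(1-\lambda)}$, is equivalent to $(\hat\mu_{r,1}(a_k)\delta(a_k)^{-\gamma\lambda})_k\in\ell^{1/(1-\lambda)}$, which is the $L^{1/(1-\lambda)}$ half of the skew Carleson condition.

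The main obstacle is the case $p_1>p_2$ (that is $\lambda<1$), which was not treated in \cite{AbaRaiSar}: extracting the sharp $L^{1/(1-\lambda)}$ summability requires the randomization argument above, a careful estimate of the off-diagonal terms $\int_D K_\beta(z,a_k)\overline{K_\beta(z,a_l)}\,d\mu(z)$ through a Schur-type bound on the lattice (for which the kernel estimates of Section~2 must be sharp enough), and the comparison of the randomized Bergman norms with the $\ell^{p_j}$ norms of the coefficients across the full range of exponents. A secondary difficulty is the non-locally-convex range $p_j\le1$, where the clean duality of the first step breaks down; there I would rely on the atomic decomposition of $A^{p_j}_{\alpha_j}(D)$, reducing the operator bound to an estimate on the coefficient sequences and again invoking the strengthened hypothesis on $\beta$ to guarantee convergence of the kernel integrals.
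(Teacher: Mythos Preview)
Your overall architecture is right and close to the paper's: reduce (i) to the bilinear identity $\langle T^\beta_\mu f,g\rangle_\beta=\int_D f\bar g\,d\mu$ via the duality of $A^{p_2}_{\alpha_2}$ with $A^{p_2'}_{\alpha_2'}$ when $p_2>1$, test with weighted Bergman kernels for $\lambda\ge1$, and use a Rademacher/Khinchine argument for $\lambda<1$. Two points of divergence deserve comment.

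For (ii)$\Rightarrow$(i) with $p_2>1$ the paper does not rediscretize the bilinear form as you do: it simply invokes the product-Carleson theorem of \cite{AbaRai} (Theorem~\ref{thm:1.1 AbaRai} here), which says directly that $\mu$ is $(\lambda,\gamma)$-skew Carleson iff $\int_D|f_1||f_2|\,d\mu\lesssim\|f_1\|_{p_1,\alpha_1}\|f_2\|_{p_2',\alpha_2'}$. Your lattice argument is essentially a reproof of that special case, though note that ``replace $|f|,|g|$ by their values at the centres'' is not literally correct---you must take suprema over the balls and then bound those by local averages via Lemma~\ref{due}. For $0<p_2\le1$ the paper does not use an atomic decomposition of $f$; it discretizes the \emph{kernel} variable instead, writing $|T^\beta_\mu f(z)|\le\sum_k\int_{B_k}|K_\beta(z,w)||f(w)|\,d\mu(w)$, bounding each piece by submean estimates over $\tilde B_k$, exploiting the subadditivity $(\sum x_k)^{p_2}\le\sum x_k^{p_2}$, and only then integrating in $z$ via Theorem~\ref{teo_stima} (this is exactly where \eqref{eq_brutta} for $j=2$ enters).

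The genuine gap is in (i)$\Rightarrow$(ii) for $\lambda<1$. Your two-random-sum bilinear test presupposes the pairing $\langle T^\beta_\mu f,g\rangle_\beta=\int f\bar g\,d\mu$, hence $p_2>1$; when $0<p_2\le1$ there is no dual Bergman space to put $g_t$ in, and your closing paragraph on atomic decomposition addresses only the sufficiency direction. The paper sidesteps this by using a \emph{single} random sum $f_t=\sum_k c_k r_k(t)f_{a_k}\in A^{p_1}_{\alpha_1}$ and estimating $\|T^\beta_\mu f_t\|_{p_2,\alpha_2}^{p_2}$ directly: averaging in $t$ and Khinchine give $\int_D\bigl(\sum_k|c_k|^2|T^\beta_\mu f_{a_k}|^2\bigr)^{p_2/2}d\nu_{\alpha_2}\lesssim\|\mathbf c\|_{p_1}^{p_2}$, which after localizing to $B_k$ and using $T^\beta_\mu f_{a_k}(a_k)\gtrsim\delta(a_k)^{-(n+1+\beta)-\frac{n+1+\alpha_1}{p_1}}\mu(B_k)$ yields $\sum_k|c_k|^{p_2}d_k^{p_2}\lesssim\|\mathbf c\|_{p_1}^{p_2}$ with $d_k=\mu(B_k)\delta(a_k)^{-(n+1+\gamma)\lambda}$. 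Duality is then applied only on sequence spaces ($\ell^{p_1/p_2}$ versus $\ell^{p_1/(p_1-p_2)}$), which is available for every $p_2<p_1$. Incidentally, in either approach no ``off-diagonal kernel interactions'' survive---the expectation of $\varepsilon_j\varepsilon_k$ kills them exactly---so the Schur-type estimate you worry about is not needed.
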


In particular, Theorem~\ref{th:1.2} is now obtained as a consequence of Theorem~\ref{th:1.3} by taking $\alpha_1=\alpha_2=\beta=0$ and $1<p_1<p_2<+\infty$.

\smallskip
The paper is structured as follows. In Section~2 we collect a number of preliminary results, on the Kobayashi geometry of strongly pseudoconvex domains, on the weighted Bergman kernels, and on the known characterizations of skew Carleson measures. Section~3 is devoted to the proof of Theorem~\ref{th:1.3}, while in Section~4 we prove a version of Theorem~\ref{th:1.3} for vanishing skew Carleson measures, showing that (under the same hypotheses on the parameters) $T_\mu^\beta\colon A^{p_1}_{\alpha_1}(D)\to A^{p_2}_{\alpha_2}(D)$ is compact if and only if the measure $\mu$ is a vanishing $(\lambda,\gamma)$-skew Carleson measure.

\section{Preliminary results}

In this section we collect definitions and preliminary results that we shall use in the rest of the paper.

From now on, $D\Subset\C^n$ will be a bounded strongly pseudoconvex domain in $\C^n$ with smooth $C^\infty$ boundary. Furthermore, we shall
use the following notations:
\begin{itemize}
\item $\delta\colon D\to\R^+$ will denote the Euclidean distance from the boundary of $D$,
that is $\delta(z)=d(z,\partial  D)$;
\item given two non-negative functions $f$,~$g\colon D\to\R^+$ we shall write $f\preceq g$
to say that there is $C>0$ such that $f(z)\le C g(z)$ for all $z\in D$ (the constant $C$ is
independent of~$z\in D$, but it might depend on other parameters, such as $r$, $\theta$, etc.);
\item given two strictly positive functions $f$,~$g\colon D\to\R^+$ we shall write $f\approx g$
if $f\preceq g$ and $g\preceq f$, that is
if there is $C>0$ such that $C^{-1} g(z)\le f(z)\le C g(z)$ for all $z\in D$;
\item $\nu$ will be the Lebesgue measure;
\item $\mathcal{O}(D)$ will denote the space of holomorphic functions on~$D$, endowed with the topology of uniform convergence on compact subsets;
\item given $0< p< +\infty$, the \emph{Bergman space} $A^p(D)$ is the (Banach if $p\ge 1$) space
$L^p(D)\cap\mathcal{O}(D)$, endowed with the $L^p$-norm;
\item more generally, if $\mu$ is a positive finite Borel measure on~$D$ and $0<p<+\infty$ we shall denote by $L^p(\mu)$ the set of complex-valued $\mu$-measurable functions $f\colon D\to\C$ such that
\[
\|f\|_{p,\mu}:=\left[\int_D |f(z)|^p\,d\mu(z)\right]^{1/p}<+\infty\;;
\]
\item if $\alpha>-1$ we shall write $\nu_\alpha=\delta^\alpha\nu$, we shall denote by $A^p_\alpha(D)$
the \emph{weighted Bergman space}
\[
A^p_\alpha(D)=L^p(\delta^\alpha \nu)\cap\mathcal{O}(D)\;,
\]
and we shall write $\|\cdot\|_{p,\alpha}$ instead of $\|\cdot\|_{p,\delta^\alpha\nu}$;
\item $K\colon D\times D\to\C$ will be the Bergman kernel of~$D$, and 
for each $z_0\in D$ we shall denote by $k_{z_0}\colon D\to\C$ the \emph{normalized
Bergman kernel} defined by
\[
k_{z_0}(z)=\frac{K(z,z_0)}{\sqrt{K(z_0,z_0)}}=\frac{K(z,z_0)}{\|K(\cdot,z_0)\|_2}\;;
\]
\item given $r\in(0,1)$ and $z_0\in D$, we shall denote by $B_D(z_0,r)$ the Kobayashi ball
of center~$z_0$ and radius $\frac{1}{2}\log\frac{1+r}{1-r}$.
\end{itemize}

\noindent We refer to, e.g., \cites{A,A1,JP,K}, for definitions, basic properties and applications to geometric function theory of the Kobayashi distance; and to \cites{Ho,Ho1,Kr,R} for definitions and basic properties of the Bergman kernel.

\smallskip
Let us now recall a few results we shall need on the Kobayashi geometry of strongly pseudoconvex domains.

\begin{lemma}[\textbf{\cite{AbaSar}*{Lemma 2.2}}]
\label{sette}
Let $D\Subset\C^n$ be a bounded  strongly pseudoconvex domain. Then there is $C>0$ such that
\[
 \frac{1-r}{C}\delta(z_0)\le  \delta(z) \le \frac{C}{1-r}\delta(z_0)
\]
for all $r\in(0,1)$, $z_0\in D$ and $z\in B_D(z_0,r)$.
\end{lemma}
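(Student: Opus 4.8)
The plan is to deduce the lemma from a sharp lower bound for the Kobayashi distance of a strongly pseudoconvex domain in terms of the boundary distance. Concretely, I would first establish the estimate
\[
k_D(z_0,z)\ge \frac12\left|\log\frac{\delta(z_0)}{\delta(z)}\right|-C_0,
\]
valid for all $z_0,z\in D$ with a constant $C_0>0$ depending only on $D$. The crucial feature is the coefficient $\tfrac12$: it is exactly the coefficient that reproduces the factor $\frac{1+r}{1-r}$ in the radius of the Kobayashi ball, so a soft estimate with a larger constant in front of the logarithm would only yield a power $(1-r)^{-c}$ with $c>1$ and would \emph{not} give the linear dependence on $\frac1{1-r}$ required by the statement. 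This sharp bound reflects the fact that, near a strongly pseudoconvex boundary point, the Kobayashi metric in the complex normal direction behaves asymptotically like $\frac{|v_N|}{2\delta(z)}$, exactly as in the unit ball. One may extract it from the precise asymptotics of Balogh--Bonk, in which $k_D(z_0,z)$ equals $\frac12\log\frac1{\delta(z_0)}+\frac12\log\frac1{\delta(z)}$ plus the logarithm of a Kor\'anyi-type cross term plus $O(1)$; bounding that cross term below by a constant times $\max\{\delta(z_0),\delta(z)\}$ produces precisely the displayed inequality.

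Granting the estimate, the rest is elementary. By definition $z\in B_D(z_0,r)$ means $k_D(z_0,z)\le \frac12\log\frac{1+r}{1-r}$, and feeding this into the lower bound gives
\[
\left|\log\frac{\delta(z_0)}{\delta(z)}\right|\le \log\frac{1+r}{1-r}+2C_0 .
\]
Exponentiating yields
\[
e^{-2C_0}\,\frac{1-r}{1+r}\le \frac{\delta(z)}{\delta(z_0)}\le e^{2C_0}\,\frac{1+r}{1-r},
\]
and since $\frac{1-r}{1+r}\ge\frac{1-r}{2}$ and $\frac{1+r}{1-r}\le\frac{2}{1-r}$ for $r\in(0,1)$, the choice $C=2e^{2C_0}$ gives both inequalities
\[
\frac{1-r}{C}\,\delta(z_0)\le \delta(z)\le \frac{C}{1-r}\,\delta(z_0)
\]
simultaneously. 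Note that because $k_D$ is symmetric, $z\in B_D(z_0,r)\iff z_0\in B_D(z,r)$, so the two inequalities are in any case equivalent under the interchange of $z_0$ and $z$; the absolute value in the Kobayashi estimate already delivers both at once.

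The genuinely hard part is the sharp lower bound on $k_D$, everything after it being bookkeeping. I expect the main technical work to be the localization: one fixes the boundary point $p$ realizing $\delta$ along a near-minimizing curve from $z_0$ to $z$, uses strong pseudoconvexity to convexify $D$ near $p$ and trap it between an inner and an outer ball of controlled radii, and then bounds the Kobayashi length of any competing curve from below by projecting onto the complex normal line, where the one-variable Poincar\'e metric of a half-plane produces exactly the $\tfrac12\,|\mathrm{d}\log\delta|$ integrand. The additive constant $C_0$ absorbs the contribution of the portion of the curve staying in a fixed interior compact set, where $\delta$ is bounded below and the metric comparison is trivial. Since this lower estimate is essentially the content of the cited \cite{AbaSar}*{Lemma 2.2} (and also follows from the Balogh--Bonk asymptotics), in practice I would invoke it rather than reprove it from scratch.
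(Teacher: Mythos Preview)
The paper does not prove this lemma; it simply quotes it from \cite{AbaSar}*{Lemma 2.2}. Your argument is correct and is in fact the standard one behind that reference: the sharp lower bound $k_D(z_0,z)\ge\frac12\bigl|\log\frac{\delta(z_0)}{\delta(z)}\bigr|-C_0$ (with the precise coefficient $\tfrac12$, which you rightly identify as essential for the linear factor $\frac{1}{1-r}$) is exactly the input used there, and your derivation from it is the expected one. So your proposal matches the approach of the cited source.
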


\begin{lemma}
\label{sei}
Let $D\Subset\C^n$ be a bounded strongly pseudoconvex domain, $\beta\in\mathbb{R}$ and $r\in(0,1)$. Then
\[
\nu_\beta\bigl(B_D(\cdot,r)\bigr)\approx \delta^{n+1+\beta}\;,
\]
where the constant depends on~$r$.
\end{lemma}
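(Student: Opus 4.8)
The plan is to reduce the weighted estimate to the unweighted one by means of Lemma~\ref{sette}, and then to invoke the standard comparison between the Euclidean volume of a Kobayashi ball and a power of the boundary distance. First I would write, for $z_0\in D$,
\[
\nu_\beta\bigl(B_D(z_0,r)\bigr)=\int_{B_D(z_0,r)}\delta(z)^\beta\,d\nu(z)\;.
\]
By Lemma~\ref{sette} there is $C>0$, depending on $r$, with $\frac{1-r}{C}\delta(z_0)\le\delta(z)\le\frac{C}{1-r}\delta(z_0)$ for every $z\in B_D(z_0,r)$; hence $\delta(z)^\beta\approx\delta(z_0)^\beta$ uniformly on $B_D(z_0,r)$ (the comparison constant depends on $\beta$ and $r$, and the two inequalities simply exchange roles according to the sign of $\beta$). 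Pulling this two-sided bound out of the integral gives
\[
\nu_\beta\bigl(B_D(z_0,r)\bigr)\approx\delta(z_0)^\beta\,\nu\bigl(B_D(z_0,r)\bigr)\;.
\]

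It then remains to treat the unweighted case $\nu\bigl(B_D(\cdot,r)\bigr)\approx\delta^{n+1}$, which carries the whole geometric content of the statement. Here I would use the non-isotropic description of Kobayashi balls near $\partial D$: for $\delta(z_0)$ small, $B_D(z_0,r)$ is comparable to a box extending like $\delta(z_0)$ in the complex normal direction and like $\delta(z_0)^{1/2}$ in the $n-1$ complex tangential directions. The complex normal factor contributes a planar area of order $\delta(z_0)^2$ and the tangential factors contribute $\bigl(\delta(z_0)^{1/2}\bigr)^{2(n-1)}=\delta(z_0)^{n-1}$, so that $\nu\bigl(B_D(z_0,r)\bigr)\approx\delta(z_0)^{n+1}$; for $z_0$ ranging in a fixed compact subset of $D$ the comparison is immediate, since both sides are then bounded above and below by positive constants. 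Combining this with the previous display yields $\nu_\beta\bigl(B_D(z_0,r)\bigr)\approx\delta(z_0)^{n+1+\beta}$, as claimed.

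The main obstacle is precisely the unweighted volume estimate: making the heuristic non-isotropic picture rigorous requires the known boundary asymptotics of the Kobayashi distance on a bounded strongly pseudoconvex domain, together with a boundary-localization argument comparing $B_D(z_0,r)$ with suitable Koranyi-type polydiscs. Since this estimate is already available in the literature underlying Lemma~\ref{sette} (Abate--Saracco~\cite{AbaSar}), I would quote it rather than reprove it, so that the genuinely self-contained part of the argument is the elementary reduction via Lemma~\ref{sette}.
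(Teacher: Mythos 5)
Your proposal is correct and takes essentially the same approach as the paper: the paper likewise reduces the case $\beta\ne 0$ to $\beta=0$ by using Lemma~\ref{sette} to pull $\delta(z_0)^\beta$ out of the integral, and it too quotes the unweighted estimate $\nu\bigl(B_D(\cdot,r)\bigr)\approx\delta^{n+1}$ from the literature (\cite{Li}*{Corollary 7} and \cite{AbaSar}*{Lemma 2.1}) rather than reproving it. Your heuristic non-isotropic-box sketch is extra material the paper omits, but since you ultimately defer to the cited result, the two arguments coincide.
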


\begin{proof}
For $\beta=0$ the result can be found in \cite{Li}*{Corollary 7} and \cite{AbaSar}*{Lemma 2.1}. If $\beta\ne 0$ Lemma~\ref{sette} yields
\[
\nu_\beta\bigl(B_D(z_0,r)\bigr)=\int_{B_D(z_0,r)}\delta(z)^\beta\,d\nu(z)\approx \delta(z_0)^\beta \nu\bigl(B_D(z_0,r)\bigr)
\]
and we are done.
\end{proof}

We shall also need the existence of suitable coverings by Kobayashi balls. Recall that for a bounded domain $D\Subset\C^n$, given $r>0$, a \emph{$r$-lattice} in~$D$ is a sequence $\{a_k\}\subset D$ such that $D=\bigcup_{k} B_D(a_k,r)$ and  there exists $m>0$ such that any point in~$D$ belongs to at most $m$ balls of the form~$B_D(a_k,R)$, where $R=\frac{1}{2}(1+r)$.

The existence of $r$-lattices in bounded strongly pseudoconvex domains is ensured by
the following result:

\begin{lemma}[\textbf{\cite{AbaSar}*{Lemma 2.5}}]
\label{uno}
Let $D\Subset\C^n$ be a bounded strongly pseudoconvex domain. Then for every $r\in(0,1)$ there exists an $r$-lattice in~$D$.
\end{lemma}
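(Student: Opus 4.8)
The plan is to construct an $r$-lattice by a Vitali-type maximal packing argument, using the Kobayashi distance $k_D$ of $D$ together with the two geometric estimates recorded in Lemmas~\ref{sette} and~\ref{sei}. Recall that on a bounded strongly pseudoconvex domain the Kobayashi distance $k_D$ is a complete distance inducing the Euclidean topology; write $\omega(t)=\frac12\log\frac{1+t}{1-t}$, so that $B_D(z_0,t)$ is exactly the $k_D$-ball of center~$z_0$ and radius~$\omega(t)$. Fix $r\in(0,1)$ and set $\rho=\omega(r)$. Using Zorn's lemma, I would choose a set $\{a_k\}\subset D$ that is maximal with respect to the property that $k_D(a_i,a_j)\ge\rho$ for all $i\ne j$. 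Since $D$ is second countable and the $k_D$-balls of radius $\rho/2$ centered at the $a_k$ are pairwise disjoint nonempty open sets, the family $\{a_k\}$ is at most countable, so it may be indexed by $k\in\N$.

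First I would verify the covering property $D=\bigcup_k B_D(a_k,r)$. Take any $z\in D$. If $z=a_k$ for some $k$ there is nothing to prove; otherwise the set $\{a_k\}\cup\{z\}$ strictly enlarges $\{a_k\}$, so by maximality it fails the $\rho$-separation condition, and since $\{a_k\}$ itself satisfies it the failing pair must involve~$z$. Hence $k_D(z,a_k)<\rho$ for some~$k$, that is $z\in B_D(a_k,r)$.

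The heart of the matter is the bounded-multiplicity condition, and this is the step I expect to be the main obstacle. Set $R=\frac12(1+r)$ and $\sigma=\omega(R)$, and fix $z_0\in D$; I must bound the number of indices $k$ with $z_0\in B_D(a_k,R)$ by a constant $m$ depending only on~$r$. Let $S$ be the set of such indices, and let $r_0\in(0,1)$ and $R'\in(0,1)$ be determined by $\omega(r_0)=\rho/2$ and $\omega(R')=\sigma+\rho/2$ (using that $\omega$ is an increasing bijection of $(0,1)$ onto $(0,+\infty)$). For each $k\in S$ we have $k_D(z_0,a_k)<\sigma$, so by the triangle inequality the balls $B_D(a_k,r_0)$, which are pairwise disjoint by $\rho$-separation, are all contained in the single ball $B_D(z_0,R')$. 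Comparing Lebesgue measures therefore gives
\[
\sum_{k\in S}\nu\bigl(B_D(a_k,r_0)\bigr)\le\nu\bigl(B_D(z_0,R')\bigr)\;.
\]
Now every $a_k$ with $k\in S$ lies in $B_D(z_0,R)$, so Lemma~\ref{sette} yields $\delta(a_k)\approx\delta(z_0)$ with constants depending only on~$r$; combined with Lemma~\ref{sei} for $\beta=0$, which gives $\nu\bigl(B_D(z,t)\bigr)\approx\delta(z)^{n+1}$ at each fixed radius, I obtain $\nu\bigl(B_D(a_k,r_0)\bigr)\approx\delta(z_0)^{n+1}$ and $\nu\bigl(B_D(z_0,R')\bigr)\approx\delta(z_0)^{n+1}$, all with constants depending only on~$r$. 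Substituting into the displayed inequality, the factors $\delta(z_0)^{n+1}$ cancel and leave $|S|\le m$ for a constant $m=m(r)$, which is exactly the finite-overlap condition. This completes the construction of the $r$-lattice.
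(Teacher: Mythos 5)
Your argument is correct: the maximal $\rho$-separated set gives the covering by maximality, and the finite overlap follows from disjointness of the half-radius balls inside $B_D(z_0,R')$ together with the uniform volume estimate $\nu\bigl(B_D(\cdot,t)\bigr)\approx\delta^{n+1}$ of Lemma~\ref{sei} and the comparison $\delta(a_k)\approx\delta(z_0)$ of Lemma~\ref{sette}. This is essentially the same Vitali-type packing argument as in the proof the paper relies on (it cites \cite{AbaSar}*{Lemma 2.5} without reproving it), so there is nothing to add.
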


We shall use a submean estimate for nonnegative plurisubharmonic functions on Kobayashi balls:

\begin{lemma}[\textbf{\cite{AbaSar}*{Corollaries 2.7 and  2.8}}]
 \label{due}
 Let $D\Subset\C^n$ be a bounded strongly pseudoconvex domain. Given $r\in(0,1)$, set $R={\frac12}(1+r)\in(0,1)$. Then there exists a constant $K_r>0$ depending on~$r$ such that
 \[
 \forall{z_0\in D}\ \ \ \ \qquad \chi(z_0)\le
{\frac{K_r}{\nu\left(B_D(z_0,r)\right)}}\int_{B_D(z_0,r)}\chi\,d\nu
 \]
 and
\[
\forall{z_0\in D\ \ \ \forall z\in B_D(z_0,r)}\ \ \ \ \chi(z)\le
{\frac{K_r}{\nu\left(B_D(z_0,r)\right)}}\int_{B_D(z_0,R)}\chi\,d\nu
\]
for every nonnegative plurisubharmonic function $\chi\colon D\to\R^+$.
\end{lemma}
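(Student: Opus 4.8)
The plan is to transplant the classical sub-mean value inequality for plurisubharmonic functions from polydiscs to Kobayashi balls, exploiting the fact that on a strongly pseudoconvex domain a Kobayashi ball is squeezed between non-isotropic polydiscs of comparable volume. I would first establish the pointwise inequality at the center (the first display), and then deduce the second, uniform-on-the-ball inequality from it by a triangle-inequality argument together with Lemmas~\ref{sette} and~\ref{sei}.

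The starting point is that iterating the one-variable sub-mean value property of subharmonic functions in each coordinate gives, for any plurisubharmonic $\chi\ge 0$ and any polydisc $P$ (a product of disks in an arbitrary unitary frame centered at $z_0$) relatively compact in $D$,
\[
\chi(z_0)\le\frac{1}{\nu(P)}\int_P\chi\,d\nu\;;
\]
this is legitimate because a unitary change of coordinates preserves both plurisubharmonicity and Lebesgue measure. The geometric input, and the crux of the whole argument, is that for every $r\in(0,1)$ there is a constant $c_r>0$ such that for each $z_0\in D$ one may choose coordinates adapted to the boundary point nearest to $z_0$ in which a polydisc $P(z_0)$ with polyradii comparable to $\bigl(\delta(z_0),\delta(z_0)^{1/2},\dots,\delta(z_0)^{1/2}\bigr)$ is contained in $B_D(z_0,r)$. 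Such a polydisc has $\nu(P(z_0))\approx\delta(z_0)^{n+1}$, so by Lemma~\ref{sei} (with $\beta=0$) we get $\nu(P(z_0))\approx\nu(B_D(z_0,r))$ with constants depending only on $r$. Granting this, using $\chi\ge 0$ and $P(z_0)\subseteq B_D(z_0,r)$,
\[
\chi(z_0)\le\frac{1}{\nu(P(z_0))}\int_{P(z_0)}\chi\,d\nu\le\frac{1}{\nu(P(z_0))}\int_{B_D(z_0,r)}\chi\,d\nu\le\frac{K_r}{\nu(B_D(z_0,r))}\int_{B_D(z_0,r)}\chi\,d\nu,
\]
where $K_r$ absorbs the comparison $\nu(B_D(z_0,r))\le K_r\,\nu(P(z_0))$, giving the first inequality.

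For the second inequality fix $z\in B_D(z_0,r)$. Writing $\rho(t)=\tfrac12\log\frac{1+t}{1-t}$ for the Kobayashi radius and noting $\rho(R)>\rho(r)$, choose $s=s(r)\in(0,1)$ with $\rho(r)+\rho(s)\le\rho(R)$; by the triangle inequality for the Kobayashi distance $B_D(z,s)\subseteq B_D(z_0,R)$. Applying the first inequality at $z$ with radius $s$ and then enlarging the domain of integration gives
\[
\chi(z)\le\frac{K_s}{\nu(B_D(z,s))}\int_{B_D(z,s)}\chi\,d\nu\le\frac{K_s}{\nu(B_D(z,s))}\int_{B_D(z_0,R)}\chi\,d\nu.
\]
Finally Lemma~\ref{sei} gives $\nu(B_D(z,s))\approx\delta(z)^{n+1}$ and $\nu(B_D(z_0,r))\approx\delta(z_0)^{n+1}$, while Lemma~\ref{sette} gives $\delta(z)\approx\delta(z_0)$ on $B_D(z_0,r)$; hence $\nu(B_D(z,s))\approx\nu(B_D(z_0,r))$ with constant depending only on $r$, and absorbing this into the constant yields the claim with a suitable $K_r$.

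The main obstacle is exactly the geometric squeezing used above: producing the adapted non-isotropic polydisc inside the Kobayashi ball with the correct anisotropic polyradii. This is where strong pseudoconvexity is essential, and it rests on the precise boundary estimates for the Kobayashi distance (comparison with the non-isotropic Carnot--Carath\'eodory geometry). Note that the naive route of inscribing a single Euclidean ball of radius $\approx\delta(z_0)$ fails for $n\ge 2$, since its volume $\approx\delta(z_0)^{2n}$ is far smaller than $\nu(B_D(z_0,r))\approx\delta(z_0)^{n+1}$, so the two denominators would not be comparable; it is precisely the anisotropy of the inscribed polydisc that restores the comparability of volumes.
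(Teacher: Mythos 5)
Your argument is correct, but it is worth pointing out that the paper contains no proof of this lemma --- it is imported verbatim from \cite{AbaSar}*{Corollaries~2.7 and~2.8} --- and the proof in that source takes a genuinely different route for the first inequality. There the submean estimate at the center is obtained by holomorphic invariance rather than by an inscribed anisotropic polydisc: since $\partial D$ is smooth, $D$ satisfies an interior ball condition, so at the boundary point nearest to $z_0$ one inscribes in $D$ a Euclidean ball $B$ of \emph{fixed} radius $\epsilon$ with $d(z_0,\partial B)=\delta(z_0)$; the decreasing property of the Kobayashi distance gives $B_{B}(z_0,r)\subseteq B_D(z_0,r)$, and the Kobayashi balls of $B$ are the images of $r\mathbb{B}^n$ under M\"obius automorphisms whose real Jacobian on $r\mathbb{B}^n$ is comparable to $\delta(z_0)^{n+1}$, so pulling $\chi$ back, using the ordinary submean property on $r\mathbb{B}^n$ and changing variables yields $\chi(z_0)\preceq\delta(z_0)^{-(n+1)}\int_{B_D(z_0,r)}\chi\,d\nu$, after which Lemma~\ref{sei} finishes; your deduction of the second inequality (choosing $s=s(r)$ with $\rho(r)+\rho(s)\le\rho(R)$ for $\rho(t)=\frac12\log\frac{1+t}{1-t}$, the triangle inequality, then Lemmas~\ref{sette} and~\ref{sei}) coincides with the one used there. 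The trade-off between the two first steps is this: your polydisc route is measure-theoretically elementary, but its crux --- that $B_D(z_0,r)$ contains a polydisc with polyradii comparable to $\bigl(\delta(z_0),\delta(z_0)^{1/2},\dots,\delta(z_0)^{1/2}\bigr)$ --- rests on the two-sided boundary estimates for the Kobayashi metric on strongly pseudoconvex domains (Graham's estimates, the comparison with the non-isotropic boundary geometry you allude to), a substantially deeper input which you invoke but do not prove; the tangent-ball argument needs nothing beyond the distance-decreasing property and the explicit automorphisms of the unit ball, with all constants elementary. Relatedly, your closing remark is only half right: a single isotropic Euclidean ball \emph{does} suffice --- not a ball inscribed in the Kobayashi ball, where your volume count correctly rules out the naive approach, but the $\epsilon$-ball inscribed in $D$ and exploited through the M\"obius pullback, which is exactly what recovers the exponent $n+1$; the anisotropy you rightly insist on is produced automatically by the automorphisms. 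Two routine points you should still make explicit: for $z_0$ in a fixed compact subset of $D$ the boundary-adapted frame is neither unique nor needed and one should use the trivial interior estimate (a uniform Euclidean ball inside $B_D(z_0,r)$ suffices there); and the solid-polydisc submean inequality follows from the one-variable one by Tonelli, legitimate since $\chi\ge 0$ is upper semicontinuous, hence Borel.
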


Now we collect a few results on the weighted Bergman kernels. Given $\beta>-1$, the \emph{weighted Bergman projection} is the orthogonal projection $P_\beta\colon L^2(\nu_\beta)\to A^2_\beta(D)$, where $\nu_\beta=\delta^\beta\nu$. It is known (see, e.g., \cite{Eng}), that there exists a function $K_\beta\colon D\times D\to\C$ such that
$$
P_\beta f(z)
=
\int_D K_\beta(z,w)f(w)\delta(w)^\beta d\nu(w)
$$
for all $f\in L^2(\nu_\beta)$. Moreover, $K_\beta(z,w)$ is holomorphic in $z$, we have $K_\beta(w,z)=\overline{K_\beta(z,w)}$ and
$$
f(z)=\int_D K_\beta(z,w)f(w)\delta(w)^\beta d\nu(w)
$$
for all $f\in A^2_\beta(D)$. The function $K_\beta$ is called the \emph{weighted Bergman kernel} of $D$. For $a\in D$, the \emph{normalized weighted Bergman kernel} of $D$ is
$$
k_{\beta,a}(z)=\dfrac{K_\beta(z,a)}{\sqrt{K_\beta(a,a)}}.
$$
When $\beta=0$ we recover the usual Bergman kernel, and we shall write $K$, respectively $k_a$, instead of~$K_0$, respectively $k_{0,a}$.

We shall need a few estimates on the behaviour of the weighted Bergman kernel. They are analogous to the classical estimates for the Bergman kernel and follow from the results obtained by Engli{\v{s}}~\cite{Eng} on the asymptotic behaviour of the weighted Bergman kernel. The first one is the following.

\begin{lemma}
\label{BKbasic}
Let $D\Subset\C^n$ be a bounded strongly
pseudoconvex domain and let $\beta>-1$. Then
\[
\|K_\beta(\cdot,z_0)\|_{2,\beta}=\sqrt{K_\beta(z_0,z_0)}\approx \delta(z_0)^{-(n+1+\beta)/2}\qquad
\hbox{and}\qquad \|k_{\beta,z_0}\|_{2,\beta}\equiv 1
\]
for all $z_0\in D$.
\end{lemma}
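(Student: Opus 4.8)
The first displayed equality is nothing but the reproducing property rewritten, so I would dispose of it immediately. The reproducing formula $f(z_0)=\int_D K_\beta(z_0,w)f(w)\delta(w)^\beta\,d\nu(w)$, valid for every $f\in A^2_\beta(D)$, says exactly that $f(z_0)=\langle f,K_\beta(\cdot,z_0)\rangle_\beta$, where $\langle\cdot,\cdot\rangle_\beta$ denotes the inner product of $L^2(\nu_\beta)$; here one uses the Hermitian symmetry $\overline{K_\beta(w,z_0)}=K_\beta(z_0,w)$. Since $K_\beta(\cdot,z_0)$ is holomorphic and square-integrable against $\nu_\beta$ for each fixed $z_0\in D$, it lies in $A^2_\beta(D)$, so I may take $f=K_\beta(\cdot,z_0)$ and read off $\|K_\beta(\cdot,z_0)\|_{2,\beta}^2=K_\beta(z_0,z_0)$. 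This gives the square-root identity, and $\|k_{\beta,z_0}\|_{2,\beta}\equiv 1$ then follows in one line from the definition of the normalized kernel. None of this uses strong pseudoconvexity.

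The substance is the two-sided estimate $K_\beta(z_0,z_0)\approx\delta(z_0)^{-(n+1+\beta)}$, and for this the plan is to invoke Engli\v{s}'s boundary asymptotics for the weighted Bergman kernel \cite{Eng}. On a smoothly bounded strongly pseudoconvex domain his analysis yields an asymptotic expansion of $K_\beta(z,z)$ as $z\to\partial D$ whose leading term is a strictly positive multiple of $\delta(z)^{-(n+1+\beta)}$, the multiplicative factor being governed by the Levi form along the boundary. From this I would extract that the function $z\mapsto K_\beta(z,z)\,\delta(z)^{n+1+\beta}$ extends continuously and with strictly positive value up to $\partial D$; in particular there is a collar $U=\{\delta<\epsilon_0\}$ of the boundary on which it is bounded above and below by positive constants, which is precisely $K_\beta\approx\delta^{-(n+1+\beta)}$ on $U$.

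To globalize, I would handle the compact remainder $\{\delta\ge\epsilon_0\}$ directly: there $\delta$ is pinched between two positive constants, and $K_\beta(z,z)$ is continuous and strictly positive. Positivity holds because $K_\beta(z_0,z_0)=\|K_\beta(\cdot,z_0)\|_{2,\beta}^2\ge 0$ cannot vanish, as otherwise $K_\beta(\cdot,z_0)\equiv 0$ would force $f(z_0)=0$ for all $f\in A^2_\beta(D)$, contradicting that the constants (which lie in $A^2_\beta(D)$ since $\beta>-1$) are reproduced. Hence both sides are comparable on the compact set as well, and gluing the two regions gives the claimed $\approx$ on all of $D$, with implied constant depending only on $\beta$ and $D$.

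The one delicate point is the passage from Engli\v{s}'s asymptotics to a \emph{uniform} comparison on the collar $U$: one must be sure the remainder in the expansion is of genuinely lower order uniformly as the boundary point varies, so that $K_\beta(z,z)\delta(z)^{n+1+\beta}$ stays away from both $0$ and $+\infty$ uniformly near $\partial D$, rather than only along each fixed normal approach. Provided Engli\v{s}'s result is quoted in the uniform form in which it is proved, this is automatic and the rest of the argument is routine bookkeeping.
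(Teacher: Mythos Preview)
Your argument is correct. The reproducing-kernel identities are handled exactly as the paper does (the paper simply calls them ``well-known''), and the diagonal estimate $K_\beta(z_0,z_0)\approx\delta(z_0)^{-(n+1+\beta)}$ is in both cases extracted from Engli\v{s}'s asymptotics together with a separate treatment of a compact interior piece.

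The only organizational difference is this: you invoke Engli\v{s}'s \emph{on-diagonal} expansion directly and close the interior case by the clean positivity argument for $K_\beta(z,z)$. The paper instead sets up local coordinates and an almost-sesquianalytic extension of the defining function to obtain the full \emph{off-diagonal} local estimate
\[
|K_\beta(z,w)|\approx\Bigl(r(z)+r(w)+|z_n-w_n|+\textstyle\sum_{j=1}^{n-1}|z_j-w_j|^2\Bigr)^{-(n+1+\beta)},
\]
and then specializes. Your route is more economical for this lemma in isolation; the paper's route does more work here but that two-point estimate is precisely what is reused in the proofs of Lemma~\ref{piu} and Theorem~\ref{teo_stima}, so from the paper's point of view it is not wasted effort.
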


\begin{proof}
The first equality, and hence the result for $k_{\beta,z_0}$, is well-known, as well as the whole statement for $\beta=0$ (see, e.g., \cite{Ho1}).

If $\beta\ne 0$, then thanks to the results in \cite{Eng}, the weighted Bergman kernel is smooth outside the boundary diagonal; so, in particular, if $z_0\in D'\Subset D$ the norm   $\|K_\beta(\cdot,z_0)\|_{p,\beta}$ is bounded by a constant depending only on $D'$, $p$ and $\beta$.

Therefore, we only have to estimate the boundary behaviour. Let $q\in \partial D$ and let $U$ be a neighbourhood of $q$ with coordinates $(z',z_n)=(z_1,\ldots, z_n)$ centered in~$q$ such that
$$
D\cap U=\{(z',z_n)\in U\ :\ \mathsf{Re}(z_n)>\psi(z')\}
$$
where $-\psi$ is strongly plurisubharmonic with $\nabla \psi\neq 0$. Set $r(z)=\mathsf{Re}(z_n)-\psi(z')$. We consider an almost-sesquianalitic extension of $r(z)$ on $U\times U$, i.e., a function, which we denote again by $r$, such that:
\begin{itemize}
\item $r(z,w)=\overline{r(w,z)}$,
\item the first derivatives of $r$ with respect to $\bar{z}$ and $w$ vanish at infinite order along $z=w$,
\item $r(z,z)=r(z)$.
\end{itemize}
It easily follows from these properties that
$$
\frac{\partial}{\partial z_j} r(O)=\frac{\partial}{\partial z_j}r(O,O)\;,
$$
and similarly for the other derivatives. Therefore we have
$$
|2r(z,w)-r(z) -r(w)|=c_1|z_n-w_n| + \sum_{j=1}^{n-1}c_2^j|z_j-w_j|^2+ O(\|z-w\|^3).
$$
Moreover $|2r(z,w)-r(z)-r(w)|$ is positive outside $z=w$, and so $c_1>0$. Therefore in a neighbourhood of $(O,O)$ we have that
$$
|r(z,w)| \approx \left(r(z)+r(w)+|z_n-w_n|+\sum_{j=1}^{n-1}|z_j-w_j|^2\right)\;.
$$

The results in \cite{Eng} imply that $K_\beta(z,w)$ is asymptotic to $c(z,w)r(z,w)^{-n-1-\beta}$ for a suitable function $c\in\mathcal{C}^\infty(\overline{D}\times\overline{D})$. Therefore on $U$ we have
$$
|K_\beta(z,w)|\approx \left(r(z)+r(w)+|z_n-w_n|+\sum_{j=1}^{n-1}|z_j-w_j|^2\right)^{-n-1-\beta}\;.$$
Thus, following the same proof as in the classical case, we obtain the assertion.
\end{proof}

A similar estimate, but with uniform constants on Kobayashi balls, is the following.

\begin{lemma}
\label{piu}
Let $D\Subset\C^n$ be a bounded strongly
pseudoconvex domain and let $\beta>-1$. Then for every $r\in(0,1)$ there exist $c_r>0$ and
$\delta_r>0$ such that
if $z_0\in D$ satisfies $\delta(z_0)<\delta_r$ then
\[
\frac{c_r}{\delta(z_0)^{n+1+\beta}}\le |K_\beta(z,z_0)|\le \frac{1}{c_r\delta(z_0)^{n+1+\beta}}
\]
and
\[
\frac{c_r}{\delta(z_0)^{n+1+\beta}}\le |k_{\beta,z_0}(z)|^2\le \frac{1}{c_r\delta(z_0)^{n+1+\beta}}
\]
for all $z\in B_D(z_0,r)$.
\end{lemma}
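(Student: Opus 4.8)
The plan is to deduce both estimates from the pointwise asymptotics for $K_\beta$ already obtained inside the proof of Lemma~\ref{BKbasic}, matched against the non-isotropic geometry of the Kobayashi balls. First I would localize. Since the assertion only concerns points $z_0$ with $\delta(z_0)<\delta_r$, and $\partial D$ is compact, I cover $\partial D$ by finitely many of the coordinate patches $U$ used in the proof of Lemma~\ref{BKbasic}. In the coordinates adapted to such a patch the Kobayashi ball $B_D(z_0,r)$ is comparable to a box of size $\delta(z_0)$ in the complex normal direction and $\delta(z_0)^{1/2}$ in the complex tangential directions, so its Euclidean diameter tends to $0$ as $\delta(z_0)\to 0$; hence I may choose $\delta_r>0$ so small that whenever $\delta(z_0)<\delta_r$ the whole ball $B_D(z_0,r)$, together with $z_0$, lies in a single patch $U$ centred at a boundary point $q$. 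There the local defining function (still denoted $r$, as in the proof of Lemma~\ref{BKbasic}) satisfies $r(z)\approx\delta(z)$, and by Lemma~\ref{sette} we have $\delta(z)\approx\delta(z_0)$ for all $z\in B_D(z_0,r)$, with constants depending only on $r$.

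The geometric input is the box description of $B_D(z_0,r)$ — the same non-isotropic estimate underlying Lemma~\ref{sei} (see \cites{A,A1}) — which gives, in the coordinates centred at $q$,
\[
|z_n-z_{0,n}|\preceq\delta(z_0),\qquad |z_j-z_{0,j}|^2\preceq\delta(z_0)\quad(1\le j\le n-1),
\]
with constants depending on $r$. Feeding this into the asymptotic estimate
\[
|K_\beta(z,w)|\approx\Bigl(r(z)+r(w)+|z_n-w_n|+\sum_{j=1}^{n-1}|z_j-w_j|^2\Bigr)^{-n-1-\beta}
\]
from the proof of Lemma~\ref{BKbasic}, taken with $w=z_0$, each of the four terms in the parenthesis is $\preceq\delta(z_0)$, while the sum $r(z)+r(z_0)\approx\delta(z)+\delta(z_0)\approx\delta(z_0)$ bounds it from below. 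Thus the whole parenthesis is $\approx\delta(z_0)$, whence
\[
|K_\beta(z,z_0)|\approx\delta(z_0)^{-(n+1+\beta)}
\]
uniformly for $z\in B_D(z_0,r)$, which is the first displayed chain (with $c_r$ the reciprocal of the implied constant). I note that the upper bound can also be had cheaply from the reproducing property alone, via $|K_\beta(z,z_0)|\le\sqrt{K_\beta(z,z)}\,\sqrt{K_\beta(z_0,z_0)}$ together with Lemma~\ref{BKbasic} and Lemma~\ref{sette}; the real content is the lower bound.

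The estimate for the normalized kernel then follows by division: since
\[
|k_{\beta,z_0}(z)|^2=\frac{|K_\beta(z,z_0)|^2}{K_\beta(z_0,z_0)}
\]
and $K_\beta(z_0,z_0)\approx\delta(z_0)^{-(n+1+\beta)}$ by Lemma~\ref{BKbasic}, the first chain yields $|k_{\beta,z_0}(z)|^2\approx\delta(z_0)^{-(n+1+\beta)}$, as claimed. The main obstacle is precisely the lower bound for $|K_\beta(\cdot,z_0)|$: one must control the oscillation of the kernel across the ball, and the only leverage for this is the anisotropic box description of $B_D(z_0,r)$ played against the correspondingly anisotropic denominator in the asymptotics of $K_\beta$ (normal distances scaling like $\delta$, tangential ones like $\delta^{1/2}$). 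Once the coordinate bounds above are in hand, everything else is a direct substitution.
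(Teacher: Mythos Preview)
Your argument is correct and follows essentially the same route as the paper: both use the Engli\v{s} asymptotic for $K_\beta$ in boundary coordinates (recorded as \eqref{eq:Eng} in the paper) together with the non-isotropic box geometry of Kobayashi balls to see that the parenthesis is comparable to $\delta(z_0)$, and then divide by $K_\beta(z_0,z_0)$ for the normalized kernel. The paper is terser, deferring the geometric part to \cite{Li}*{Theorem~12} and \cite{AbaSar}*{Lemma~3.2, Corollary~3.3} and invoking completeness of the Kobayashi distance for the localization, whereas you spell out the anisotropic tangential/normal scalings explicitly; substantively the proofs coincide.
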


\begin{proof}
If $\beta=0$ then this is proven in \cite{Li}*{Theorem~12} and \cite{AbaSar}*{Lemma~3.2 and Corollary 3.3}. If $\beta\ne 0$, then thanks to the results in \cite{Eng}, we have that
\begin{equation}
K_\beta(z,z_0)\approx c(z,z_0)\left(r(z)+r(z_0)+|z_n-z_{0,n}|+\sum_{j=1}^{n-1}|z_j-z_{0,j}|^2\right)^{-(n+1+\beta)}
\label{eq:Eng}
\end{equation}
in suitable local coordinates around a point of the boundary diagonal, i.e., if $d(z_0, \partial D)$, $d(z,\partial D)$ and $\|z-z_0\|$ are small enough. By the completeness of the Kobayashi metric, there exists $\delta_r>0$ such that every $z\in B_D(z_0,r)$ satisfies such condition if $\delta(z_0)<\delta_r$. The assertion then follows by arguing as in \cite{Li}*{Theorem~12} or as in \cite{AbaSar}*{Lemma~3.2 and Corollary 3.3}.
\end{proof}

\begin{rem}
Note that in the previous lemma the estimates from above hold even when $\delta(z_0)\ge\delta_r$, possibly with a different constant $c_r$. Indeed, when $\delta(z_0)\ge\delta_r$ and $z\in B_D(z_0,r)$ by Lemma~\ref{sette} there is $\tilde\delta_r>0$ such that $\delta(z)\ge \tilde\delta_r$; as a consequence we can find $M_r>0$ such that $|K_\beta(z,z_0)|\le M_r$ as soon as $\delta(z_0)\ge\delta_r$ and $z\in B_D(z_0,r)$, and the assertion follows from the fact that $D$ is a bounded domain.
\end{rem}

A very useful integral estimate generalizing the analogous ones  for the unweighted Bergman kernel (see \cite{Li}*{Corollary~11, Theorem~13} and \cite{AbaRaiSar}*{Theorem~2.7})
is the following:

\begin{teorema}\label{teo_stima}
Let $D\Subset\C^n$ be a bounded strongly pseudoconvex domain, $z_0\in D$ and $\alpha$,~$\beta>-1$. Then
for $0<p<+\infty$ and $\alpha-\beta<(n+\beta+1)(p-1)$ we have
$$
\int_D|K_\beta(\zeta, z_0)|^p\delta(\zeta)^\alpha d\nu(\zeta)
\preceq
\delta(z_0)^{\alpha-\beta-(n+\beta+1)(p-1)}\;.
$$
In particular,
\[
\|K(\cdot,z_0)\|_{p,\alpha} \preceq \delta(z_0)^{\frac{n+1+\alpha}{p}-(n+1+\beta)}\;.
\]
\end{teorema}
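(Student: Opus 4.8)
\textit{Proof plan.} The plan is to localise the integral near $\partial D$, reduce it to an explicit model integral in boundary coordinates, and evaluate that by iterated integration. First I would dispose of the bounded region. If $z_0$ ranges over a fixed compact $K\Subset D$, then $\delta(z_0)$ is bounded above and below, and since by Engli\v{s}'s asymptotics (as used in the proof of Lemma~\ref{BKbasic}) $K_\beta$ is continuous off the boundary diagonal, the integral $\int_D|K_\beta(\zeta,z_0)|^p\delta(\zeta)^\alpha\,d\nu(\zeta)$ is finite and uniformly bounded, so both sides are comparable to a constant and there is nothing to prove. Hence I may assume $\delta(z_0)<\delta_0$ for a small $\delta_0>0$. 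Covering a neighbourhood of $\partial D$ by finitely many boundary charts of the type in Lemma~\ref{BKbasic} and using compactness of $\partial D$ to make all constants uniform, I split $\int_D$ into the integral over a chart $U$ around the boundary point $q$ closest to $z_0$ and the integral over $\{\zeta:\ |\zeta-q|\ge\varepsilon_0\}$. On the latter region $(\zeta,z_0)$ stays at positive distance from the boundary diagonal, so $|K_\beta(\zeta,z_0)|$ is uniformly bounded and that contribution is $\preceq\int_D\delta^\alpha\,d\nu<+\infty$. Since the hypothesis $\alpha-\beta<(n+\beta+1)(p-1)$ makes the target exponent $\alpha-\beta-(n+\beta+1)(p-1)$ strictly negative, one has $\delta(z_0)^{\alpha-\beta-(n+\beta+1)(p-1)}\to+\infty$ as $\delta(z_0)\to0$, so this bounded term is absorbed and everything reduces to the local integral over $U$.

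On $U$ I would use the pointwise comparability \eqref{eq:Eng} (equivalently, the displayed estimate in the proof of Lemma~\ref{BKbasic})
\[
|K_\beta(\zeta,z_0)|\approx\Bigl(r(\zeta)+r(z_0)+|\zeta_n-z_{0,n}|+\textstyle\sum_{j=1}^{n-1}|\zeta_j-z_{0,j}|^2\Bigr)^{-(n+1+\beta)},
\]
where $r$ is the defining function and $\delta\approx r$ near $\partial D$. The key normalisation is to centre the chart at $q$ and choose affine coordinates so that $\psi(0)=0$ and $\nabla\psi(0)=0$; since $q$ is the boundary point closest to $z_0$, this forces $z_0'=0$, whence $\psi(\zeta')=O(|\zeta'|^2)$. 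Passing to the real variables $(\rho,\tau,w')$ adapted to $z_0$, with $\rho=r(\zeta)\approx\delta(\zeta)$, $\tau=\mathsf{Im}(\zeta_n-z_{0,n})$ and $w'=\zeta'\in\C^{n-1}$ (the Jacobian is bounded above and below), one computes $\mathsf{Re}(\zeta_n-z_{0,n})=\rho-t+\psi(w')=\rho-t+O(|w'|^2)$ with $t=r(z_0)\approx\delta(z_0)$. The vanishing of the linear-in-$w'$ term is exactly what the choice $z_0'=0$ buys, and it gives $|\zeta_n-z_{0,n}|\approx|\rho-t+O(|w'|^2)|+|\tau|$, so that on $\{\rho>0\}$ the quantity in parentheses is comparable to $\rho+t+|\tau|+|w'|^2$. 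Thus, with $s=p(n+1+\beta)$, the local integral is comparable to
\[
\int_0^c\int_{|\tau|<c}\int_{|w'|<c}\bigl(\rho+t+|\tau|+|w'|^2\bigr)^{-s}\rho^\alpha\,dV(w')\,d\tau\,d\rho.
\]

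I would evaluate this by integrating in the three groups of variables in turn. Integrating first in the single tangential variable $\tau$ gives (dominant-term behaviour, valid since $s>1$) a factor $\approx(\rho+t+|w'|^2)^{1-s}$; integrating next in $w'\in\C^{n-1}$, after passing to $R=|w'|$ and substituting $u=R^2$, gives $\approx(\rho+t)^{n-s}$ (valid since $s>n$; for $n=1$ this step is vacuous); integrating finally in $\rho$ against the weight $\rho^\alpha$ gives $\approx t^{\,n+1+\alpha-s}$ (valid since $s>n+1+\alpha$ and $\alpha>-1$). Since $t\approx\delta(z_0)$ and
\[
n+1+\alpha-s=n+1+\alpha-p(n+1+\beta)=\alpha-\beta-(n+\beta+1)(p-1),
\]
this is exactly the asserted bound. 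The decisive point is that the single hypothesis $\alpha-\beta<(n+\beta+1)(p-1)$ is equivalent to $s>n+1+\alpha$, and since $\alpha>-1$ forces $n+1+\alpha>n>1$, it automatically secures the convergence conditions $s>n$ and $s>1$ of the two earlier steps. The ``in particular'' statement follows by taking $p$-th roots, because $\tfrac1p\bigl(\alpha-\beta-(n+\beta+1)(p-1)\bigr)=\tfrac{n+1+\alpha}{p}-(n+1+\beta)$.

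The technical heart --- and the step I expect to be the main obstacle --- is the reduction to the model integral: verifying that in the normalised coordinates the denominator is genuinely comparable to $\rho+t+|\tau|+|w'|^2$ over the whole integration region (which is what the choice of $q$ as the nearest boundary point, and the resulting vanishing of the linear term in $\mathsf{Re}(\zeta_n-z_{0,n})$, achieve), and checking that all comparability constants, charts and cut-offs can be chosen uniformly in $z_0$ and in $q\in\partial D$. Once the model integral is in place its evaluation is routine; this is the same scheme used for the unweighted kernel in \cite{Li} and \cite{AbaRaiSar}, now carried out with the exponent $n+1$ replaced by $n+1+\beta$.
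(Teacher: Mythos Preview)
Your proposal is correct and is precisely the approach the paper intends: the paper's proof merely says that for $\beta=0$ this is \cite{Li}*{Corollary~11, Theorem~13} and \cite{AbaRaiSar}*{Theorem~2.7}, and for $\beta\ne0$ one uses the Engli\v{s} asymptotics~\eqref{eq:Eng} and repeats the unweighted argument with $n+1$ replaced by $n+1+\beta$ --- exactly the localisation, coordinate normalisation and iterated model-integral computation you outline. One cosmetic slip: where you write ``$n+1+\alpha>n>1$'' you should have $n\ge1$ (for $n=1$ the $w'$-integration is vacuous, as you note, and $s>n+1+\alpha>1$ still handles the $\tau$-integral).
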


\begin{proof}
If $\beta=0$ then this is proven in \cite{Li}*{Corollary~11, Theorem~13} and \cite{AbaRaiSar}*{Theorem~2.7}. If $\beta\ne 0$, then it suffices to use \eqref{eq:Eng} and follow the same proof as in the unweighted case.
\end{proof}

Finally, the normalized Bergman kernel can be used to build functions belonging to suitable weighted Bergman spaces:

\begin{lemma}
\label{th:fact4}
Let $D\Subset\C^n$ be a bounded strongly pseudoconvex domain, and $\beta>-1$. Given $0<p<+\infty$ and $-1<\alpha<\min\{(n+\beta+1)p-np-1,(n+\beta+1)p-n-1\}$, set
\[
\tau=\frac{n+1+\beta}{2}-\frac{n+1+\alpha}{p}\;.
\]
For each $a\in D$ set $f_a=\delta(a)^\tau k_{\beta,a}$. Let $\{a_k\}$ be an $r$-lattice and $\mathbf{c}=\{c_k\}\in\ell^p$, and put
\[
f=\sum_{k=0}^\infty c_kf_{a_k}\;.
\]
Then $f\in A^p_\alpha(D)$ with $\|f\|_{p,\alpha}\preceq \|\mathbf{c}\|_p$.
\end{lemma}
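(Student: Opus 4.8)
The plan is to reduce everything to size estimates on the weighted Bergman kernel. By Lemma~\ref{BKbasic} we have $\sqrt{K_\beta(a,a)}\approx\delta(a)^{-(n+1+\beta)/2}$, hence $|f_a|=\delta(a)^\tau|k_{\beta,a}|\approx\delta(a)^\sigma\,|K_\beta(\cdot,a)|$ with $\sigma=\tau+\frac{n+1+\beta}{2}=(n+1+\beta)-\frac{n+1+\alpha}{p}$. It therefore suffices to bound the $\|\cdot\|_{p,\alpha}$-norm of $z\mapsto\sum_k|c_k|\,\delta(a_k)^\sigma\,|K_\beta(z,a_k)|$ by $\|\mathbf{c}\|_p$.

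When $0<p\le 1$ this is immediate: from $|\sum_k t_k|^p\le\sum_k|t_k|^p$ we get $\|f\|_{p,\alpha}^p\preceq\sum_k|c_k|^p\|f_{a_k}\|_{p,\alpha}^p$, and a single application of Theorem~\ref{teo_stima} (legitimate because $\alpha-\beta<(n+\beta+1)(p-1)$, which is exactly $\alpha<(n+\beta+1)p-n-1$) gives $\|f_{a_k}\|_{p,\alpha}^p\preceq\delta(a_k)^{\,p\sigma+\alpha-\beta-(n+\beta+1)(p-1)}$; a direct computation shows the exponent is identically $0$, so $\|f_{a_k}\|_{p,\alpha}\preceq 1$ uniformly and we are done.

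For $p>1$ the heart of the matter is the auxiliary estimate
\[
\sum_k|K_\beta(z,a_k)|\,\delta(a_k)^{t}\preceq\delta(z)^{\,t-(n+1+\beta)}\qquad(n<t<n+1+\beta).
\]
I would prove it by observing that $w\mapsto|K_\beta(z,w)|$ is plurisubharmonic (since $K_\beta(z,\cdot)$ is anti-holomorphic), applying the submean estimate of Lemma~\ref{due} on each $B_D(a_k,r)$, replacing $\delta(a_k)$ by $\delta(w)$ and $\nu\bigl(B_D(a_k,r)\bigr)$ by $\delta(a_k)^{n+1}$ on the ball (Lemmas~\ref{sette} and~\ref{sei}), using the finite-overlap property of the $r$-lattice to pass from $\sum_k\int_{B_D(a_k,r)}$ to $\int_D$, and finally invoking Theorem~\ref{teo_stima} with exponent $1$ and weight $t-(n+1)$ (whence the restriction $n<t<n+1+\beta$). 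With this in hand I would split $\delta(a_k)^\sigma=\delta(a_k)^{s_1}\delta(a_k)^{s_2}$ and $|K_\beta(z,a_k)|=|K_\beta(z,a_k)|^{1/p}|K_\beta(z,a_k)|^{1/p'}$, apply Hölder in $k$, bound the $p'$-factor by the auxiliary estimate with $t=s_2p'$, then integrate in $z$, exchange sum and integral, and apply Theorem~\ref{teo_stima} once more (exponent $1$) to the resulting $z$-integral. Since $s_1+s_2=\sigma$, the final power of $\delta(a_k)$ works out to be exactly $0$, giving $\|f\|_{p,\alpha}^p\preceq\sum_k|c_k|^p=\|\mathbf{c}\|_p^p$.

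The main obstacle is purely the bookkeeping of exponents: one must choose the splitting $s_1+s_2=\sigma$ so that all applications of the kernel estimates are legitimate, namely $n<s_2p'<n+1+\beta$ together with the admissibility inequalities $-1<p s_2-(p-1)(n+1+\beta)+\alpha<\beta$ demanded by Theorem~\ref{teo_stima} in the $z$-integral. Writing $u=s_2p'-(n+1+\beta)$, these read $\max\{-(1+\beta),\frac{-1-\alpha}{p-1}\}<u<\min\{0,\frac{\beta-\alpha}{p-1}\}$, and a check of the four corner inequalities shows that the only nonvacuous one (the other three reducing to $\beta>-1$ and $\alpha>-1$) is $-(1+\beta)(p-1)<\beta-\alpha$, which is precisely the hypothesis $\alpha<(n+\beta+1)p-np-1$. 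Thus the two bounds appearing in the hypothesis on~$\alpha$ correspond exactly to the two regimes $p\le 1$ and $p>1$.
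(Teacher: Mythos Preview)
Your argument is correct and is essentially the atomic-decomposition/Schur-test proof that the paper defers to \cite{HuLvZhu}*{Lemma~2.6}: subadditivity of $|\cdot|^p$ for $0<p\le 1$, and for $p>1$ a H\"older-in-$k$ splitting combined with a discretized Forelli--Rudin estimate (your auxiliary sum bound), both resting on the kernel integral estimates of Theorem~\ref{teo_stima}. The paper's own proof merely cites the unweighted case and observes that the weighted kernel bounds of Theorem~\ref{teo_stima} let the identical computation go through verbatim; your exponent bookkeeping is accurate, and your identification of which of the two upper bounds on~$\alpha$ is active in each regime is exactly right.
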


\begin{proof}
If $\beta=0$ then this is a consequence of {\cite{HuLvZhu}*{Lemma~2.6}}. If $\beta\ne 0$, then it suffices to use the estimates given by Theorem \ref{teo_stima} and follow the same proof as in the unweighted case.
\end{proof}

We also need to recall a few definitions and results about Carleson measures.

\begin{defin}
\label{def:Carluno}
Let $0<p$, $q<+\infty$ and $\alpha>-1$. A \emph{$(p,q;\alpha)$-skew Carleson measure} is a
finite positive Borel measure $\mu$ such that
\[
\int_D |f(z)|^q\,d\mu(z)\preceq \|f\|_{p,\alpha}^q
\]
for all $f\in A^p_\alpha(D)$. In other words, $\mu$ is $(p,q;\alpha)$-skew Carleson if $A^p_\alpha(D)\hookrightarrow L^q(\mu)$ continuously. In this case we shall denote by $\|\mu\|_{p,q;\alpha}$ the operator norm of the inclusion $A^p_\alpha(D)\hookrightarrow L^q(\mu)$.
Furthermore, a $(p,q;\alpha)$-skew Carleson measure is \emph{vanishing} if
\[
\lim_{j\to+\infty} \int_D |f_j(z)|^q\,d\mu(z)=0
\]
for any bounded sequence $\{f_j\}_{j\in\N}\subset A^p_\alpha(D)$ converging to $0$ uniformly on any compact subset of $D$. For $p\ge 1$, $\mu$ is a vanishing $(p,q;\alpha)$-skew Carleson if and only if $A^p_\alpha(D)\hookrightarrow L^q(\mu)$ compactly (see, e.g., \cite{AbaRaiSar}*{Lemma~4.5}).
\end{defin}

\begin{rem}
When $p=q$ we recover the usual (non-skew) notion of Carleson measure for $A^p_\alpha(D)$.
\end{rem}

\begin{defin}
\label{def:Carldue}
Let $\theta\in\R$, and let $\mu$ be a finite positive Borel measure on~$D$. Given $r\in(0,1)$, let
$\hat\mu_{r,\theta}\colon D\to\R$ be defined by
\[
\hat\mu_{r,\theta}(z)=\frac{\mu\bigl(B_D(z,r)\bigr)}{\nu\bigl(B_D(z,r)\bigr)^\theta}\;;
\]
we shall write $\hat\mu_r$ for $\hat\mu_{r,1}$.

We say that $\mu$ is a \emph{geometric $\theta$-Carleson measure} if $\hat\mu_{r,\theta}\in L^\infty(D)$ for all $r\in(0,1)$, that is if
for every $r>0$ we have
\[
\mu\bigl(B_D(z,r)\bigr)\preceq \nu\bigl(B_D(z,r)\bigr)^\theta
\]
for all $z\in D$, where the constant depends only on~$r$.

Furthermore, we shall say that $\mu$ is a \emph{geometric vanishing $\theta$-Carleson measure} if
\[
\lim_{z\to\partial D} \hat\mu_{r,\theta}(z)=0
\]
for all $r\in(0,1)$.
\end{defin}

Notice that Lemma~\ref{sei} yields
\begin{equation}
\hat\mu_{r,\theta}\approx \delta^{-(n+1)(\theta-1)}\hat\mu_r\;.
\label{eq:hat}
\end{equation}

In \cite{AbaRaiSar} we proved (among other things) that, if $p\ge 1$, a measure $\mu$ is $(p,p;\alpha)$-skew Carleson if and only if it is geometric $\theta$-Carleson, where $\theta=1+\frac{\alpha}{n+1}$.
Hu, Lv and Zhu in \cite{HuLvZhu} have given a similar geometric characterization of $(p,q;\alpha)$-skew Carleson measures for all values of $p$ and $q$; to recall their results we need another definition.

 \begin{defin}
 \label{def:Berez}
 Let $\mu$ be a finite positive Borel measure on~$D$, and $s>0$. The \emph{Berezin transform} of \emph{level}~$s$ of~$\mu$ is the function $B^s\mu\colon D\to\R^+\cup\{+\infty\}$ given by
 \[
 B^s\mu(z)=\int_D |k_z(w)|^s\,d\mu(w)\;.
 \]
 \end{defin}

The geometric characterization of $(p,q;\alpha)$-skew Carleson measures is different according to whether $p\le q$ or $p>q$. We first recall the characterization for the case $p\le q$.

\begin{teorema}[{\cite{HuLvZhu}*{Theorem~3.1}, \cite{AbaRai}*{Theorem~2.15}}]
\label{carthetaCarluno}
Let $D\Subset\C^n$ be a bounded strongly
pseudoconvex domain. Let $0< p\le q<+\infty$ and $\alpha>-1$; set $\theta=1+\frac{\alpha}{n+1}$.
Then the following assertions are
equivalent:
\begin{itemize}
\item[(i)] $\mu$ is a $(p,q;\alpha)$-skew Carleson measure;
\item[(ii)] $\mu$ is a geometric $\frac{q}{p}\theta$-Carleson measure;
\item[(iii)] there exists $r_0\in(0,1)$ such that $\hat\mu_{r_0,\frac{q}{p}\theta}\in L^\infty(D)$;
\item[(iv)] for some (and hence any) $r\in(0,1)$ we have $\hat\mu_{r,\frac{q}{p}}\delta^{-\alpha\frac{q}{p}}\in L^\infty(D)$;
\item[(v)] for some (and hence any) $r\in(0,1)$ and some (and hence any) $r$-lattice $\{a_k\}$ in $D$ we have
\[
\forall k\in\N\qquad \mu\bigl(B_D(a_k,r)\bigr)\preceq \nu\bigl(B_D(a_k,r)\bigr)^{\frac{q}{p}\theta}\;;
\]
\item[(vi)] for some (and hence all) $s>\theta\frac{q}{p}$ we have
\[
B^s\mu\preceq \delta^{(n+1)\left(\theta\frac{q}{p}-\frac{s}{2}\right)}\;;
\]
\end{itemize}
Moreover we have
\begin{equation}\label{norme}
\|\mu\|_{p,q;\alpha}
\approx \|\hat\mu_{r,\frac{q}{p}\theta}\|_\infty\approx\|\hat\mu_{r,\frac{q}{p}}\delta^{-\alpha\frac{q}{p}}\|_\infty
\approx \|\delta^{(n+1)\left(\frac{s}{2}-\theta\frac{q}{p}\right)}B^s\mu \|_\infty\;.
\end{equation}
\end{teorema}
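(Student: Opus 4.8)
The plan is to separate the purely geometric equivalences (ii)$\Leftrightarrow$(iii)$\Leftrightarrow$(iv)$\Leftrightarrow$(v) from the two genuinely analytic statements (i) and (vi), and to route everything through the geometric condition. The equivalences among (ii), (iii) and (iv) are immediate book-keeping: by Lemma~\ref{sei} and \eqref{eq:hat} one has $\hat\mu_{r,\frac qp\theta}\approx\delta^{-(n+1)(\frac qp\theta-1)}\hat\mu_r\approx\hat\mu_{r,\frac qp}\delta^{-\alpha q/p}$ (using $(n+1)\theta=n+1+\alpha$), so the three $L^\infty$ conditions coincide up to constants, and the passage from one radius to all radii is the standard fact that each Kobayashi ball $B_D(z,r')$ is covered by a bounded number of balls $B_D(\cdot,r)$, combined with Lemma~\ref{sette}. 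The equivalence with the lattice condition (v) is the same covering argument evaluated along an $r$-lattice (Lemma~\ref{uno}): the lattice balls cover $D$, and $\delta\approx\delta(a_k)$ on $B_D(a_k,R)$ by Lemma~\ref{sette}.

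For the necessity direction (i)$\Rightarrow$(ii) I would test the Carleson inequality on kernel functions. Fix an auxiliary weight $\beta_0$ so large that $\alpha-\beta_0<(n+\beta_0+1)(p-1)$, set $\tau_0=\frac{n+1+\beta_0}{2}-\frac{n+1+\alpha}{p}$, and for $a\in D$ put $f_a=\delta(a)^{\tau_0}k_{\beta_0,a}$. Then $\|f_a\|_{p,\alpha}\preceq1$ uniformly in $a$, by the norm bound $\|K_{\beta_0}(\cdot,a)\|_{p,\alpha}\preceq\delta(a)^{(n+1+\alpha)/p-(n+1+\beta_0)}$ of Theorem~\ref{teo_stima} together with $\sqrt{K_{\beta_0}(a,a)}\approx\delta(a)^{-(n+1+\beta_0)/2}$ from Lemma~\ref{BKbasic}. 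On the other hand, Lemma~\ref{piu} applied with exponent $\beta_0$ gives $|f_a(w)|\approx\delta(a)^{\tau_0-(n+1+\beta_0)/2}=\delta(a)^{-(n+1+\alpha)/p}$ for $w\in B_D(a,r)$, so feeding $f_a$ into the inclusion $A^{p}_{\alpha}(D)\hookrightarrow L^q(\mu)$ yields $\delta(a)^{-(n+1+\alpha)q/p}\mu\bigl(B_D(a,r)\bigr)\preceq\|f_a\|_{p,\alpha}^q\preceq1$, i.e. $\mu(B_D(a,r))\preceq\delta(a)^{(n+1)\theta q/p}\approx\nu(B_D(a,r))^{\frac qp\theta}$, which is (v).

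The sufficiency direction (v)$\Rightarrow$(i) is where $p\le q$ enters and where I expect the main difficulty. Fix an $r$-lattice $\{a_k\}$ and write $t=q/p\ge1$. Covering $D$ by the balls $B_D(a_k,r)$ and applying the submean estimate of Lemma~\ref{due} to the plurisubharmonic function $|f|^q$, together with Lemma~\ref{sette} to pass from $\nu$ to $\nu_\alpha$ on each ball, one obtains
\[
\int_{B_D(a_k,r)}|f|^q\,d\mu\preceq\mu\bigl(B_D(a_k,r)\bigr)\,\delta(a_k)^{-(n+1+\alpha)t}\Bigl(\int_{B_D(a_k,R)}|f|^p\,d\nu_\alpha\Bigr)^{t}\preceq b_k^t,
\]
where $b_k=\int_{B_D(a_k,R)}|f|^p\,d\nu_\alpha$ and the last step uses (v) in the form $\mu(B_D(a_k,r))\preceq\delta(a_k)^{(n+1+\alpha)t}$. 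Summing over $k$, the superadditivity $\sum_k b_k^t\le(\sum_k b_k)^t$ (valid precisely because $t\ge1$) and the bounded-overlap property of the $R$-balls give $\int_D|f|^q\,d\mu\preceq(\sum_k b_k)^t\preceq\|f\|_{p,\alpha}^{q}$, closing the loop (i)$\Leftrightarrow\cdots\Leftrightarrow$(v). The indispensable use of the hypothesis $p\le q$ here is exactly why the complementary range $p>q$ needs a different (duality, $L^{p/(p-q)}$) treatment.

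It remains to incorporate the Berezin condition (vi). For (ii)$\Rightarrow$(vi) I would discretize $B^s\mu(z)=\int_D|k_z|^s\,d\mu$ along the lattice as above, use Lemma~\ref{due} and Lemma~\ref{sette} to replace $\mu$ by $\delta^{(n+1)(\theta t-1)}\nu$, and then use $K(z,z)\approx\delta(z)^{-(n+1)}$ to reduce to
\[
B^s\mu(z)\preceq\delta(z)^{(n+1)s/2}\int_D|K(w,z)|^s\delta(w)^{(n+1)(\theta t-1)}\,d\nu(w).
\]
The integral estimate of Theorem~\ref{teo_stima} (with $\beta=0$, exponent $s$, weight $(n+1)(\theta t-1)$) applies exactly when $s>\theta t$, and after simplifying the exponents gives the bound $\delta(z)^{(n+1)(\theta t-s/2)}$; this is precisely why the threshold in (vi) is $s>\theta\frac qp$. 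Conversely (vi)$\Rightarrow$(ii) is immediate from the lower bound $|k_z(w)|^s\gtrsim\delta(z)^{-(n+1)s/2}$ on $B_D(z,r)$ (Lemma~\ref{piu}), which forces $\delta(z)^{-(n+1)s/2}\mu(B_D(z,r))\preceq B^s\mu(z)\preceq\delta(z)^{(n+1)(\theta t-s/2)}$. Finally, the norm comparisons~\eqref{norme} follow by retaining the implied constants throughout the above estimates.
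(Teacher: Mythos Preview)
The paper does not contain its own proof of this statement; Theorem~\ref{carthetaCarluno} is quoted from \cite{HuLvZhu}*{Theorem~3.1} and \cite{AbaRai}*{Theorem~2.15} without argument, so there is no in-paper proof to compare against. Your outline is correct and is essentially the standard route followed in those references: test functions built from (weighted) Bergman kernels for (i)$\Rightarrow$(v), the submean/lattice estimate plus $\ell^1\hookrightarrow\ell^t$ for (v)$\Rightarrow$(i), and the integral kernel estimate of Theorem~\ref{teo_stima} for the Berezin direction. Two small points worth tightening: in (v)$\Rightarrow$(i) you should apply Lemma~\ref{due} to $|f|^p$ (not to $|f|^q$) and then raise the resulting pointwise bound to the power $t=q/p$, since the submean inequality on $|f|^q$ alone does not directly produce $\bigl(\int|f|^p\bigr)^{q/p}$; and the lower bounds from Lemma~\ref{piu} used in (i)$\Rightarrow$(v) and (vi)$\Rightarrow$(ii) are only stated for $\delta(a)<\delta_r$, so you should remark that the complementary compact region is handled trivially because $\mu$ is finite and $\delta$ is bounded below there.
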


The geometric characterization of $(p,q;\alpha)$-skew Carleson measures when $p>q$ has a slightly different flavor.

\begin{teorema}[{\cite{HuLvZhu}*{Theorem~3.3}, \cite{AbaRai}*{Theorem~2.16}}]
\label{carthetaCarldue}
Let $D\Subset\C^n$ be a bounded strongly
pseudoconvex domain. Let $0< q< p<+\infty$ and $\alpha>-1$; put $\theta=1+\frac{\alpha}{n+1}$.
Then the following assertions are
equivalent:
\begin{itemize}
\item[(i)] $\mu$ is a $(p,q;\alpha)$-skew Carleson measure;
\item[(ii)] $\mu$ is a vanishing $(p,q,\alpha)$-skew Carleson measure;
\item[(iii)] $\hat\mu_r \delta^{-\alpha\frac{q}{p}}\in L^{\frac{p}{p-q}}(D)$ for some (and hence any) $r\in(0,1)$;
\item[(iv)] for some (and hence any) $r\in(0,1)$ and for some (and hence any) $r$-lattice $\{a_k\}$ in $D$ we have
$\{\hat\mu_{r,\theta\frac{q}{p}}(a_k)\}\in\ell^{\frac{p}{p-q}}$;
\item[(v)] for some (and hence all) $s>\theta\frac{q}{p}+\frac{n}{n+1}\left(1-\frac{q}{p}\right)$ we have
\[
\delta^{-(n+1)\left(\theta\frac{q}{p}-\frac{s}{2}+\frac{p-q}{p}\right)}B^s\mu\in L^{\frac{p}{p-q}}(D)\;;
\]
\end{itemize}
Moreover we have
\begin{equation}\label{norme2}
\|\mu\|_{p,q;\alpha}
\approx  \|\hat\mu_r\delta^{-\alpha\frac{q}{p}} \|_{\frac{p}{p-q}}
\approx \|\hat\mu_{r,\theta\frac{q}{p}}(a_k)\|_{\frac{p}{p-q}}
\approx \|\delta^{-(n+1)(\theta\frac{q}{p}-\frac{s}{2}+\frac{p-q}{p})}B^s\mu\|_{\frac{p}{p-q}}\;.
\end{equation}
\end{teorema}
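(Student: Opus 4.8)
The statement is quoted from \cite{HuLvZhu} and \cite{AbaRai}; I sketch the strategy one follows to prove it. The plan is to establish the purely measure-theoretic equivalence (iii)$\Leftrightarrow$(iv) by discretization, the two analytic implications (iv)$\Rightarrow$(i) and (i)$\Rightarrow$(iv) by a submean/H\"older argument and a Luecking-type randomization respectively, and finally (iii)$\Leftrightarrow$(v) through the kernel estimates. Since (ii)$\Rightarrow$(i) holds by definition, the equivalence (i)$\Leftrightarrow$(ii) --- a phenomenon special to the range $p>q$ --- will drop out of the proof, because the $\ell^{p/(p-q)}$-summability in (iv) forces the inclusion $A^{p}_{\alpha}(D)\hookrightarrow L^q(\mu)$ to be not merely bounded but compact.

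I would first dispose of the discretization (iii)$\Leftrightarrow$(iv). Fix an $r$-lattice $\{a_k\}$ (Lemma~\ref{uno}). The function $\hat\mu_r$ is slowly varying in the sense that its averages over concentric Kobayashi balls of comparable radii are comparable (Lemma~\ref{sei} and the triangle inequality for the Kobayashi distance), and the balls $B_D(a_k,r)$ cover $D$ with bounded overlap, so $\int_D G\,d\nu\approx\sum_k G(a_k)\,\nu(B_D(a_k,r))\approx\sum_k G(a_k)\,\delta(a_k)^{n+1}$ for slowly varying nonnegative $G$, using Lemma~\ref{sei}. Applied to $G=\bigl(\hat\mu_r\delta^{-\alpha q/p}\bigr)^{p/(p-q)}$, together with \eqref{eq:hat} and the algebraic identity $(1-q/p)\cdot\frac{p}{p-q}=1$, the extra factor $\delta(a_k)^{n+1}$ is absorbed exactly and one gets $\int_D\bigl(\hat\mu_r\delta^{-\alpha q/p}\bigr)^{p/(p-q)}d\nu\approx\sum_k\hat\mu_{r,\theta q/p}(a_k)^{p/(p-q)}$, which is (iii)$\Leftrightarrow$(iv) with matching norms. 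For the sufficiency (iv)$\Rightarrow$(i),(ii) I take $f\in A^{p}_\alpha(D)$ and estimate $\int_D|f|^q\,d\mu\preceq\sum_k\mu(B_D(a_k,r))\sup_{B_D(a_k,r)}|f|^q$ by bounded overlap, bound the supremum via the submean estimate of Lemma~\ref{due} applied to the plurisubharmonic $|f|^p$, and replace $\delta^\alpha$ and $\nu(B_D(a_k,R))$ by $\delta(a_k)^\alpha$ and $\delta(a_k)^{n+1}$ on each ball (Lemmas~\ref{sette},~\ref{sei}). This gives $\int_D|f|^q\,d\mu\preceq\sum_k\hat\mu_{r,\theta q/p}(a_k)\,b_k^{q/p}$ with $b_k=\int_{B_D(a_k,R)}|f|^p\delta^\alpha\,d\nu$; H\"older with conjugate exponents $\frac{p}{p-q}$ and $\frac{p}{q}$ and $\sum_k b_k\preceq\|f\|_{p,\alpha}^p$ then yield (i) with $\|\mu\|_{p,q;\alpha}\preceq\|\hat\mu_{r,\theta q/p}(a_k)\|_{\ell^{p/(p-q)}}$. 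The vanishing statement (ii) follows by cutting the series: the finite head tends to $0$ since $f_j\to0$ uniformly on $\{\delta\ge\varepsilon\}$, while the tail is controlled uniformly in $j$ by the tail of the convergent series in (iv) through the same H\"older bound.

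The main obstacle is the necessity (i)$\Rightarrow$(iv), for which I would run Luecking's randomization argument. For $\mathbf{c}=\{c_k\}\in\ell^p$ form $f_t=\sum_k \varepsilon_k(t)\,c_k\,\delta(a_k)^\tau k_{a_k}$ with $\tau=\frac{n+1}{2}-\frac{n+1+\alpha}{p}$ (the $\beta=0$ case of Lemma~\ref{th:fact4}) and Rademacher signs $\varepsilon_k(t)$. Averaging in $t$ and applying Khinchine's inequality turns both sides of the Carleson inequality $\int_D|f_t|^q\,d\mu\preceq\|f_t\|_{p,\alpha}^q$ into square-function expressions; Lemma~\ref{th:fact4} bounds the averaged right-hand side by $\|\mathbf{c}\|_p^q$, while on the left I restrict to $\bigcup_k B_D(a_k,r)$, keep only the diagonal term of the square function, and use $|k_{a_k}(z)|^2\approx\delta(a_k)^{-(n+1)}$ on $B_D(a_k,r)$ (Lemma~\ref{piu}) with bounded overlap. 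This produces $\sum_k|c_k|^q\,\delta(a_k)^{-(n+1+\alpha)q/p}\mu(B_D(a_k,r))\preceq\|\mathbf{c}\|_p^q$ for all $\mathbf{c}\in\ell^p$, that is $\sum_k t_k\,\hat\mu_{r,\theta q/p}(a_k)\preceq\|\mathbf{t}\|_{p/q}$ for all nonnegative $\mathbf{t}\in\ell^{p/q}$; since $p/q>1$, duality of $\ell^{p/q}$ forces $\{\hat\mu_{r,\theta q/p}(a_k)\}\in\ell^{(p/q)'}=\ell^{p/(p-q)}$, which is (iv). The delicate points are checking that $\alpha$ lies in the admissible range of Lemma~\ref{th:fact4} (and otherwise modifying the test functions), and tracking the Khinchine constants so that the reverse norm estimate combines with the forward one of the previous paragraph to give \eqref{norme2} sharply.

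Finally, (iii)$\Leftrightarrow$(v) is obtained by comparing $B^s\mu$ with $\hat\mu_r$. Restricting the defining integral of $B^s\mu(z)$ to $B_D(z,r)$ and using Lemma~\ref{piu} bounds $B^s\mu(z)$ below by a constant multiple of $\delta(z)^{(n+1)(1-s/2)}\hat\mu_r(z)$, while the matching upper bound follows by decomposing $D$ along an $r$-lattice and summing the off-diagonal kernel estimates of Theorem~\ref{teo_stima}, the restriction $s>\theta\frac{q}{p}+\frac{n}{n+1}\bigl(1-\frac{q}{p}\bigr)$ being exactly what makes this series converge. Taking $L^{p/(p-q)}$-norms against the weight $\delta^{-\alpha q/p}$ then converts (iii) into (v) with the stated exponent, and chaining all the $\approx$'s assembles into \eqref{norme2}.
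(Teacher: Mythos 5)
This theorem is stated in the paper without proof: it is quoted from \cite{HuLvZhu}*{Theorem~3.3} and \cite{AbaRai}*{Theorem~2.16}, so there is no internal argument to compare against, and the right benchmark is the proof in those sources. Your sketch reconstructs essentially that proof --- lattice discretization for (iii)$\Leftrightarrow$(iv) (and your exponent bookkeeping is correct: by \eqref{eq:hat} one has $\hat\mu_{r,\theta\frac{q}{p}}\approx\delta^{(n+1)(1-\frac{q}{p})}\hat\mu_r\delta^{-\alpha\frac{q}{p}}$, and $(1-\frac{q}{p})\cdot\frac{p}{p-q}=1$ absorbs the volume factor exactly), submean plus H\"older for (iv)$\Rightarrow$(i) with the vanishing statement (ii) obtained by cutting the series, Luecking--Khinchine randomization for (i)$\Rightarrow$(iv), and two-sided comparison of $B^s\mu$ with $\delta^{(n+1)(1-\frac{s}{2})}\hat\mu_r$ for (v), where the weight $\delta^{-(n+1)(\theta\frac{q}{p}-\frac{s}{2}+\frac{p-q}{p})}$ does reduce to $\delta^{-\alpha\frac{q}{p}}$ against $\hat\mu_r$ --- and these are precisely the techniques this paper itself redeploys for the Toeplitz analogues in the proofs of Theorems~\ref{teo_toeplitz} and~\ref{teo_toeplitz_vanishing}. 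The one substantive caveat is the one you flagged yourself: with the test functions $\delta(a)^\tau k_a$, the $\beta=0$ case of Lemma~\ref{th:fact4} only admits $-1<\alpha<\min\{p-1,(n+1)(p-1)\}$, so for general $\alpha>-1$ (in particular for $p\le 1$) the randomization must be run with powers $k_a^{\sigma}$ of the normalized kernel for $\sigma$ large, exactly as in \cite{HuLvZhu} and as this paper does in the proof of Theorem~\ref{teo_vanish_prod}; with that modification your outline is a faithful account of the cited proof.
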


We also have a geometric characterization of vanishing $(p,q;\alpha)$-skew Carleson measures when $p\le q$:

\begin{teorema}[{\cite{HuLvZhu}*{Theorem~3.1}, \cite{AbaRaiSar}*{Theorem~4.10}}]
\label{carthetavanCarl}
Let $D\Subset\C^n$ be a bounded strongly
pseudoconvex domain. Let $0< p\le q<+\infty$ and $\alpha>-1$; set $\theta=1+\frac{\alpha}{n+1}$.
Then the following assertions are
equivalent:
\begin{itemize}
\item[(i)] $\mu$ is a vanishing $(p,q;\alpha)$-skew Carleson measure;
\item[(ii)] $\mu$ is a geometric vanishing $\frac{q}{p}\theta$-Carleson measure;
\item[(iii)] there exists $r_0\in(0,1)$ such that $\lim\limits_{z\to\partial D}\hat\mu_{r_0,\frac{q}{p}\theta}(z)=0$;
\item[(iv)] for some (and hence any) $r\in(0,1)$ we have $\lim\limits_{z\to\partial D}\hat\mu_{r,\frac{q}{p}}(z)\delta(z)^{-\alpha\frac{q}{p}}=0$;
\item[(v)] for some (and hence any) $r\in(0,1)$ and some (and hence any) $r$-lattice $\{a_k\}$ in $D$ we have
\[
\lim_{k\to+\infty} \hat\mu_{r_0,\frac{q}{p}\theta}(a_k)=\lim_{k\to+\infty}\hat\mu_{r,\frac{q}{p}}(a_k)\delta(a_k)^{-\alpha\frac{q}{p}}=0\;;
\]
\item[(vi)] for some (and hence all) $s>\theta\frac{q}{p}$ we have
\[
\lim_{z\to\partial D} \delta(z)^{(n+1)\left(\frac{s}{2}-\theta\frac{q}{p}\right)}B^s\mu(z)=0 \;.
\]
\end{itemize}
\end{teorema}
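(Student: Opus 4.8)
The plan is to separate the genuinely analytic equivalence (i)$\Leftrightarrow$(ii) from the purely geometric equivalences among (ii)--(vi). The key observation is that conditions (ii)--(vi) are precisely the boundary-vanishing analogues of conditions (ii)--(vi) of Theorem~\ref{carthetaCarluno}: each of them replaces a bound of the form ``$\cdot\preceq 1$'' by the requirement that the same quantity tend to $0$ as $z\to\partial D$. Consequently every implication among (ii)--(vi) can be obtained by repeating the corresponding step in the proof of Theorem~\ref{carthetaCarluno}, carrying the ``$\to 0$'' through the estimates. Concretely, \eqref{eq:hat} together with Lemma~\ref{sei} gives $\hat\mu_{r,\frac{q}{p}\theta}\approx\hat\mu_{r,\frac{q}{p}}\delta^{-\alpha q/p}$, so (iii)$\Leftrightarrow$(iv) is immediate and (ii)$\Rightarrow$(iii) is trivial; the passage (iii)$\Rightarrow$(ii) from one radius to all radii, and the lattice form (v), follow from the standard covering argument (a ball $B_D(z,r)$ is covered by a bounded number of balls $B_D(w_i,r_0)$ whose centers tend to $\partial D$ together with $z$, and Lemmas~\ref{sette} and~\ref{sei} make the relevant volumes comparable); finally (vi) is equivalent to the others through a lower bound coming from Lemma~\ref{piu} and an upper bound obtained by a lattice decomposition together with the submean estimate of Lemma~\ref{due}.

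For (i)$\Rightarrow$(ii) I would test $\mu$ against normalized reproducing kernels. For $z_0\in D$ put $f_{z_0}=c_{z_0}K_\alpha(\cdot,z_0)$, where Theorem~\ref{teo_stima} (with kernel exponent $\beta=\alpha$) lets me choose $c_{z_0}\approx\delta(z_0)^{(n+1+\alpha)(1-1/p)}$ so that $\|f_{z_0}\|_{p,\alpha}\approx 1$; when $p\le 1$ one replaces $K_\alpha(\cdot,z_0)$ by a suitable power to keep both the norm normalization and the fact that $f_{z_0}\to 0$ uniformly on compacta as $z_0\to\partial D$. Lemma~\ref{piu} yields $|f_{z_0}(z)|^q\succeq\delta(z_0)^{-q(n+1+\alpha)/p}$ for $z\in B_D(z_0,r)$, so that, using $(n+1)\theta=n+1+\alpha$ and Lemma~\ref{sei},
\[
\int_D|f_{z_0}|^q\,d\mu\ge\int_{B_D(z_0,r)}|f_{z_0}|^q\,d\mu\succeq\frac{\mu\bigl(B_D(z_0,r)\bigr)}{\nu\bigl(B_D(z_0,r)\bigr)^{\frac{q}{p}\theta}}=\hat\mu_{r,\frac{q}{p}\theta}(z_0)\;.
\]
For any sequence $z_j\to\partial D$ the $f_{z_j}$ form a bounded sequence in $A^{p}_\alpha(D)$ converging to $0$ uniformly on compacta, so the vanishing hypothesis (i) forces the left-hand side to $0$; hence $\hat\mu_{r,\frac{q}{p}\theta}(z_j)\to 0$, which gives (iv) and therefore (ii).

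The converse (ii)$\Rightarrow$(i) is the step I expect to be the main obstacle. Fix an $r$-lattice $\{a_k\}$ with finite overlap $m$ (Lemma~\ref{uno}) and let $\{f_j\}\subset A^p_\alpha(D)$ be bounded and converging to $0$ uniformly on compacta. Covering $D$ by the balls $B_D(a_k,r)$ and applying the submean estimate of Lemma~\ref{due} to the plurisubharmonic function $|f_j|^p$, and then Lemmas~\ref{sette} and~\ref{sei} to insert the weight $\delta^\alpha$ and to rewrite the volumes, I obtain on each ball
\[
\int_{B_D(a_k,r)}|f_j|^q\,d\mu\preceq\hat\mu_{r,\frac{q}{p}\theta}(a_k)\,\|f_j\|_{p,\alpha,B_D(a_k,R)}^q\;,
\]
whence $\int_D|f_j|^q\,d\mu\preceq\sum_k\hat\mu_{r,\frac{q}{p}\theta}(a_k)\,b_k^{q/p}$ with $b_k=\|f_j\|_{p,\alpha,B_D(a_k,R)}^p$ and $\sum_k b_k\le m\,\|f_j\|_{p,\alpha}^p$. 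Given $\varepsilon>0$, condition (v) provides $\delta_0>0$ with $\hat\mu_{r,\frac{q}{p}\theta}(a_k)<\varepsilon$ whenever $\delta(a_k)<\delta_0$, and I split the sum accordingly: the finitely many terms with $\delta(a_k)\ge\delta_0$ tend to $0$ as $j\to\infty$ because $f_j\to 0$ uniformly on the relatively compact set $\bigcup_{\delta(a_k)\ge\delta_0}B_D(a_k,R)$, while the remaining terms are bounded, using $q/p\ge 1$ and hence $\sum b_k^{q/p}\le(\sum b_k)^{q/p}$, by $\varepsilon\,(m\|f_j\|_{p,\alpha}^p)^{q/p}\preceq\varepsilon$. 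Letting $j\to\infty$ and then $\varepsilon\to 0$ gives $\int_D|f_j|^q\,d\mu\to 0$, that is (i). The delicate points throughout are the uniform control of the overlap $m$ and of all the implied constants as the balls approach $\partial D$, and the correct behaviour of the normalized kernels in the range $p\le 1$.
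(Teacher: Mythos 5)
The paper does not actually prove this statement---it is quoted from \cite{HuLvZhu}*{Theorem~3.1} and \cite{AbaRaiSar}*{Theorem~4.10}---and your sketch correctly reproduces the standard argument of those references: normalized-kernel test functions (with the correct normalization $c_{z_0}\approx\delta(z_0)^{(n+1+\alpha)(1-1/p)}$ and the power trick for $p\le 1$) for (i)$\Rightarrow$(ii), the lattice/submean/finite-overlap splitting, using $q/p\ge 1$ and the $\varepsilon$-decomposition of the sum, for (ii)$\Rightarrow$(i), and carrying the boundary-vanishing through the geometric equivalences of Theorem~\ref{carthetaCarluno}, including the Lemma~\ref{piu} lower bound and the $s>\theta\frac{q}{p}$ threshold for the Berezin transform. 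I see no genuine gap: the points you flag as delicate (uniformity of constants near $\partial D$, behaviour of the kernels for $p\le 1$, finiteness of the inner part of the lattice sum) are exactly the ones handled by Lemmas~\ref{sette}, \ref{sei}, \ref{piu} and Theorem~\ref{teo_stima}, as in the cited proofs.
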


A consequence of these theorems is that the property of being $(p,q;\alpha)$-skew Carleson actually depends only on the quotient $q/p$ and on $\alpha$. We shall then introduce the following definition:

\begin{defin}
Take $\lambda$, $\alpha\in\R$. A finite positive Borel measure $\mu$ on $D$ is a \emph{$(\lambda,\alpha)$-skew Carleson measure} if
\begin{itemize}
\item[--] $\lambda\ge 1$ and $\hat\mu_{r,\lambda}\delta^{-\alpha\lambda}
\in L^\infty(D)$ for some (and hence any) $r\in(0,1)$, and we shall put $\|\mu\|_{\lambda,\alpha}=\|\hat\mu_{r,\lambda}\delta^{-\alpha\lambda}\|_\infty$; or,
\item[--] $\lambda<1$ and $\hat\mu_r\delta^{-\alpha\lambda}\in L^{\frac{1}{1-\lambda}}(D)$ for some (and hence any) $r\in(0,1)$, and we shall put $\|\mu\|_{\lambda,\alpha}=\|\hat\mu_{r}\delta^{-\alpha\lambda}\|_{\frac{1}{1-\lambda}}$.
\end{itemize}
Notice that by \cite{HuLvZhu}*{Lemma 2.3} different~$r$'s yield equivalent norms.

Furthermore we say that $\mu$ is \emph{vanishing $(\lambda,\alpha)$-skew Carleson measure} if
\begin{itemize}
\item[--] $\lambda\ge 1$ and $\lim\limits_{z\to\partial D}\hat\mu_{r,\lambda}(z)\delta(z)^{-\alpha\lambda}=0$
for some (and hence any) $r\in(0,1)$; or,
\item[--] $\lambda<1$ and $\mu$ is a $(\lambda,\alpha)$-skew Carleson measure.
\end{itemize}
So a measure is (vanishing) $(p,q;\alpha)$-skew Carleson if and only if it is (vanishing) $(q/p,\alpha)$-skew Carleson. Notice that
the definition of $(0,\alpha)$-skew Carleson does not depend on $\alpha$.
\end{defin}

\smallskip This definition has the following easy (but useful) consequence.

\begin{lemma}[{\cite{AbaRai}*{Lemma 2.18}}]
\label{th:last}
Let $D\Subset\C^n$ be a bounded strongly pseudoconvex domain, $\lambda>0$ and $\alpha>-1$. Let $\mu$ be a $(\lambda,\alpha)$-skew Carleson measure, and $\beta>-\lambda(\alpha+1)$. Then $\mu_\beta=\delta^{\beta}\mu$ is a $(\lambda, \alpha+\frac{\beta}{\lambda})$-skew Carleson measure with $\|\mu_\beta\|_{\lambda,\alpha+\frac{\beta}{\lambda}}\approx \|\mu\|_{\lambda,\alpha}$.
\end{lemma}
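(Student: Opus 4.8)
The plan is to reduce the whole statement to a single pointwise comparison of $\mu_\beta$ and $\mu$ on Kobayashi balls, supplied by Lemma~\ref{sette}, and then simply read off the two clauses of the definition of skew Carleson measure. The norm equivalence will be essentially one line; the only point requiring care is the finiteness of $\mu_\beta$, which is where the hypothesis on $\beta$ enters.

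First I would establish the central estimate. Since $\mu_\beta=\delta^\beta\mu$, for every $z\in D$ and $r\in(0,1)$ we have
\[
\mu_\beta\bigl(B_D(z,r)\bigr)=\int_{B_D(z,r)}\delta(w)^\beta\,d\mu(w)\;.
\]
By Lemma~\ref{sette} every $w\in B_D(z,r)$ satisfies $\delta(w)\approx\delta(z)$ with constants depending only on~$r$, hence $\delta(w)^\beta\approx\delta(z)^\beta$ uniformly on $B_D(z,r)$ (the implied constant now also depending on~$\beta$). Pulling this factor out of the integral gives $\mu_\beta\bigl(B_D(z,r)\bigr)\approx\delta(z)^\beta\mu\bigl(B_D(z,r)\bigr)$, and dividing by $\nu\bigl(B_D(z,r)\bigr)^\theta$ yields, for every $\theta\in\R$,
\[
\widehat{(\mu_\beta)}_{r,\theta}\approx\delta^\beta\,\hat\mu_{r,\theta}\;.
\]

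Next I would run the two regimes of the definition in parallel, using the identity $(\alpha+\tfrac{\beta}{\lambda})\lambda=\alpha\lambda+\beta$. If $\lambda\ge 1$, the central estimate with $\theta=\lambda$ gives
\[
\widehat{(\mu_\beta)}_{r,\lambda}\,\delta^{-(\alpha+\frac{\beta}{\lambda})\lambda}\approx\delta^\beta\hat\mu_{r,\lambda}\,\delta^{-\alpha\lambda-\beta}=\hat\mu_{r,\lambda}\,\delta^{-\alpha\lambda}\;,
\]
so the left-hand side lies in $L^\infty(D)$ exactly when the right-hand side does, with comparable sup-norms; this is the $L^\infty$ clause and it delivers $\|\mu_\beta\|_{\lambda,\alpha+\beta/\lambda}\approx\|\mu\|_{\lambda,\alpha}$. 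If $\lambda<1$, the same estimate with $\theta=1$ gives $\widehat{(\mu_\beta)}_r\,\delta^{-(\alpha+\frac{\beta}{\lambda})\lambda}\approx\hat\mu_r\,\delta^{-\alpha\lambda}$, and taking $L^{1/(1-\lambda)}(D)$ norms yields both the membership and the norm equivalence in the second clause.

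The one non-formal point, and the main obstacle, is that $\mu_\beta$ must be a \emph{finite} measure before it can be called skew Carleson; this is automatic for $\beta\ge 0$ (as $\delta$ is bounded), but not for $\beta<0$, when $\delta^\beta$ blows up at $\partial D$. Here the hypothesis $\beta>-\lambda(\alpha+1)$ is exactly the inequality $\alpha+\tfrac{\beta}{\lambda}>-1$, so it simultaneously makes the target second parameter admissible and forces finiteness. Concretely I would fix an $r$-lattice $\{a_k\}$ (Lemma~\ref{uno}) and, using the bounded overlap together with the central estimate, bound $\int_D\delta^\beta\,d\mu$ by a sum $\sum_k\delta(a_k)^\beta\mu(B_D(a_k,r))$; inserting the skew Carleson control of $\mu(B_D(a_k,r))$ coming from $\|\mu\|_{\lambda,\alpha}$ and applying Lemma~\ref{sei} converts this into a series in powers of $\delta(a_k)$ whose convergence is ensured precisely by $\alpha+\tfrac{\beta}{\lambda}>-1$. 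I expect this finiteness and admissibility bookkeeping — rather than the norm equivalence, which is immediate from Lemma~\ref{sette} — to be the only place demanding attention.
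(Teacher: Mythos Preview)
The paper does not supply its own proof of this lemma; it is quoted verbatim from \cite{AbaRai}*{Lemma~2.18}. Your argument is correct and is the natural one: the pointwise comparison $\widehat{(\mu_\beta)}_{r,\theta}\approx\delta^\beta\hat\mu_{r,\theta}$ via Lemma~\ref{sette} immediately gives the norm equivalence in both regimes, and you have correctly identified that the hypothesis $\beta>-\lambda(\alpha+1)$ is exactly $\alpha+\beta/\lambda>-1$, needed both for the new second parameter to be admissible and for finiteness of~$\mu_\beta$. One small refinement for the finiteness step when $0<\lambda<1$: you do not have a pointwise bound on $\mu(B_D(a_k,r))$ in that case, so the lattice sum $\sum_k\delta(a_k)^\beta\mu(B_k)$ should be handled by H\"older with exponents $1/\lambda$ and $1/(1-\lambda)$, pairing $\delta(a_k)^{\beta+(n+1+\alpha)\lambda}$ against $\mu(B_k)\delta(a_k)^{-(n+1+\alpha)\lambda}\in\ell^{1/(1-\lambda)}$; the first factor lies in $\ell^{1/\lambda}$ precisely when $\alpha+\beta/\lambda>-1$, as you claim.
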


We end this section by recalling the main result in \cite{AbaRai}, which gives a characterisation of $(\lambda,\gamma)$-skew Carleson measures on bounded strongly
pseudoconvex domain through products of functions in weighted Bergman spaces.

\begin{teorema}[\cite{AbaRai}*{Theorem~1.1}]
\label{thm:1.1 AbaRai}
Let $D\Subset\C^n$ be a bounded strongly
pseudoconvex domain, and let $\mu$ be a positive finite Borel measure on $D$. Fix an integer $k\geq 1$, and let $0<p_j,q_j<+\infty$ and $-1<\alpha_j<+\infty$ be given for $j=1,\ldots, k$. Set
\[
\lambda = \sum_{j=1}^k\frac{q_j}{p_j}\qquad\mathit{and}\qquad\gamma=\frac{1}{\lambda}\sum_{j=1}^k \frac{\alpha_j q_j}{p_j}\;.
\]
Then $\mu$ is a $(\lambda,\gamma)$-skew Carleson measure if and only if there exists $C>0$ such that
\begin{equation}
\int_D \prod_{j=1}^k |f_j(z)|^{q_j}\,d\mu(z)\le C \prod_{j=1}^k \|f_j\|_{p_j,\alpha_j}^{q_j}\label{eq:main}
\end{equation}
for any $f_j\in A^{p_j}_{\alpha_j}(D)$. 
\end{teorema}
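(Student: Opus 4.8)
The plan is to prove both implications by combining the $r$-lattice decomposition of Lemma~\ref{uno} with the sub-mean value estimate of Lemma~\ref{due}, exploiting throughout that $\chi:=\prod_{j=1}^k|f_j|^{q_j}=\exp\bigl(\sum_j q_j\log|f_j|\bigr)$ is a nonnegative plurisubharmonic function, that $\delta\approx\delta(a_k)$ on each Kobayashi ball (Lemma~\ref{sette}), and that $\nu\bigl(B_D(\cdot,r)\bigr)\approx\delta^{n+1}$ (Lemma~\ref{sei}). In both directions the argument splits according to whether $\lambda\ge1$ (an $L^\infty$ bound on $\hat\mu_{r,\lambda}\delta^{-\gamma\lambda}$) or $\lambda<1$ (an $L^{1/(1-\lambda)}$ bound on $\hat\mu_r\delta^{-\gamma\lambda}$), and it uses the standard discretization of these norms over the lattice.

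For the implication ``$(\lambda,\gamma)$-skew Carleson $\Rightarrow$ \eqref{eq:main}'', fix an $r$-lattice $\{a_k\}$ and apply Lemma~\ref{due} to each psh function $|f_j|^{p_j}$, so that $\sup_{B_D(a_k,r)}|f_j|^{p_j}\lesssim\nu\bigl(B_D(a_k,r)\bigr)^{-1}\int_{B_D(a_k,R)}|f_j|^{p_j}\,d\nu$. Writing $I_{k,j}=\int_{B_D(a_k,R)}|f_j|^{p_j}\delta^{\alpha_j}\,d\nu$ and extracting the weight via Lemma~\ref{sette}, this yields
\[
\sup_{B_D(a_k,r)}\chi\lesssim\nu\bigl(B_D(a_k,r)\bigr)^{-\lambda}\delta(a_k)^{-\gamma\lambda}\prod_{j=1}^k I_{k,j}^{q_j/p_j}\;,
\]
whence $\int_D\chi\,d\mu\le\sum_k\mu\bigl(B_D(a_k,r)\bigr)\sup_{B_D(a_k,r)}\chi\lesssim\sum_k\hat\mu_{r,\lambda}(a_k)\delta(a_k)^{-\gamma\lambda}\prod_j I_{k,j}^{q_j/p_j}$. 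When $\lambda\ge1$ I bound $\hat\mu_{r,\lambda}(a_k)\delta(a_k)^{-\gamma\lambda}\le\|\mu\|_{\lambda,\gamma}$, apply the discrete H\"older inequality with exponents $\{\lambda p_j/q_j\}$, and use the contraction $\ell^1\hookrightarrow\ell^\lambda$ together with the finite-overlap bound $\sum_k I_{k,j}\lesssim\|f_j\|_{p_j,\alpha_j}^{p_j}$. When $\lambda<1$ I apply instead the discrete H\"older inequality with exponents $1/(1-\lambda)$ and $\{p_j/q_j\}_j$, pairing the sequence $\{\hat\mu_{r,\lambda}(a_k)\delta(a_k)^{-\gamma\lambda}\}$, which lies in $\ell^{1/(1-\lambda)}$ with norm $\approx\|\mu\|_{\lambda,\gamma}$ by discretization of the $L^{1/(1-\lambda)}$ norm, against the sequences $\{I_{k,j}\}$; in both cases the outcome is $\lesssim\|\mu\|_{\lambda,\gamma}\prod_j\|f_j\|_{p_j,\alpha_j}^{q_j}$.

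For the converse ``\eqref{eq:main} $\Rightarrow$ $(\lambda,\gamma)$-skew Carleson'', fix $\beta_0$ large enough that Theorem~\ref{teo_stima} and Lemma~\ref{th:fact4} apply for every $j$, set $\tau_j=\frac{n+1+\beta_0}{2}-\frac{n+1+\alpha_j}{p_j}$ and $f_{a,j}=\delta(a)^{\tau_j}k_{\beta_0,a}$. When $\lambda\ge1$ I test \eqref{eq:main} with the single atoms $f_j=f_{a,j}$: these satisfy $\|f_j\|_{p_j,\alpha_j}\lesssim1$ by Theorem~\ref{teo_stima}, while Lemma~\ref{piu} gives $\prod_j|f_j|^{q_j}\approx\delta(a)^{-(n+1)\lambda-\gamma\lambda}$ on $B_D(a,r)$; hence $\mu\bigl(B_D(a,r)\bigr)\lesssim\delta(a)^{(n+1)\lambda+\gamma\lambda}\approx\nu\bigl(B_D(a,r)\bigr)^{\lambda}\delta(a)^{\gamma\lambda}$, i.e. $\hat\mu_{r,\lambda}\delta^{-\gamma\lambda}\in L^\infty(D)$. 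When $\lambda<1$ I test instead with lattice sums $f_j=\sum_k c_{k,j}f_{a_k,j}$, $\mathbf c_j=\{c_{k,j}\}\in\ell^{p_j}$, for which Lemma~\ref{th:fact4} gives $\|f_j\|_{p_j,\alpha_j}\lesssim\|\mathbf c_j\|_{p_j}$.

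The main obstacle is precisely this last case: because of cancellation in the sums one cannot bound $\int_D\prod_j|f_j|^{q_j}\,d\mu$ from below directly. I overcome this by Luecking's randomization: replace $c_{k,j}$ by $\varepsilon_k^{(j)}c_{k,j}$ with independent Rademacher sequences $\{\varepsilon^{(j)}\}_j$, take expectations, and use Khinchine's inequality together with independence to get $\mathbb{E}\prod_j|f_j(z)|^{q_j}\approx\prod_j\bigl(\sum_k|c_{k,j}|^2|f_{a_k,j}(z)|^2\bigr)^{q_j/2}$, while the right-hand side of \eqref{eq:main} stays $\lesssim\prod_j\|\mathbf c_j\|_{p_j}^{q_j}$. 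Keeping only the diagonal term on each $B_D(a_l,r)$ and invoking the lower bound of Lemma~\ref{piu} gives, with $A_l:=\delta(a_l)^{-(n+1)\lambda-\gamma\lambda}\mu\bigl(B_D(a_l,r)\bigr)$,
\[
\sum_l A_l\prod_{j=1}^k|c_{l,j}|^{q_j}\lesssim\prod_{j=1}^k\|\mathbf c_j\|_{p_j}^{q_j}\;.
\]
Specializing $c_{l,j}=x_l^{1/p_j}$ for an arbitrary nonnegative $\mathbf x=\{x_l\}$ turns this into $\sum_l A_l x_l^{\lambda}\lesssim\bigl(\sum_l x_l\bigr)^{\lambda}$, and writing $z_l=x_l^{\lambda}$ gives the linear inequality $\sum_l A_l z_l\lesssim\|\mathbf z\|_{1/\lambda}$ valid for all $\mathbf z\ge0$; by duality $\{A_l\}\in\ell^{1/(1-\lambda)}$, which after discretization is exactly $\hat\mu_r\delta^{-\gamma\lambda}\in L^{1/(1-\lambda)}(D)$. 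Tracking the $\lesssim$ constants in each step yields the claimed comparability of $\|\mu\|_{\lambda,\gamma}$ with the best constant $C$ in \eqref{eq:main}, completing the equivalence.
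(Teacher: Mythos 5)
Your proof is correct, but note first that this paper never actually proves Theorem~\ref{thm:1.1 AbaRai}: it is quoted from \cite{AbaRai}*{Theorem~1.1} and used as a black box, so the comparison can only be made with the cited source and with its traces inside this paper (the proof of Theorem~\ref{teo_vanish_prod}, which explicitly ``follows the same construction'', and the proof of Theorem~\ref{teo_toeplitz}). Your sufficiency direction --- $r$-lattice covering (Lemma~\ref{uno}), the submean estimate of Lemma~\ref{due} applied to each $|f_j|^{p_j}$, weight extraction via Lemmas~\ref{sette} and~\ref{sei}, then discrete H\"older with exponents $\{\lambda p_j/q_j\}$ when $\lambda\ge1$ and with $1/(1-\lambda)$, $\{p_j/q_j\}$ when $\lambda<1$, using Theorem~\ref{carthetaCarldue}(iii)$\Leftrightarrow$(iv) to pass between the continuous and discrete conditions --- is the standard argument and consistent with the source. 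For necessity with $\lambda<1$, your independent Rademacher sequences, Khinchine's inequality, the specialization $c_{l,j}=x_l^{1/p_j}$ and $\ell^{1/\lambda}$--$\ell^{1/(1-\lambda)}$ duality are exactly the mechanism this paper itself adapts in Case~2 of (i)$\Rightarrow$(ii) of Theorem~\ref{teo_toeplitz} (from \cite{AbaRai}*{Proposition~3.4}). Where you genuinely diverge is the necessity for $\lambda\ge1$: the construction of \cite{AbaRai}, reproduced in the proof of Theorem~\ref{teo_vanish_prod}, takes $f_{j,a}=\delta(a)^{r_j}k_a^{\sigma_j}$ --- integer powers of the \emph{unweighted} normalized kernel --- and concludes via the Berezin-transform characterization (Theorem~\ref{carthetaCarluno}(vi)); you instead introduce one large auxiliary weight $\beta_0$ and test with $\delta(a)^{\tau_j}k_{\beta_0,a}$, concluding via Lemma~\ref{piu} and the geometric ball characterization, which is the Pau--Zhao \cite{PZ} route also followed in Case~1 of (i)$\Rightarrow$(ii) of Theorem~\ref{teo_toeplitz}. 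Your variant avoids the integrality constraints on the $\sigma_j$ and the Berezin machinery, but needs the weighted-kernel estimates (Theorem~\ref{teo_stima}, Lemma~\ref{piu}), which rest on Engli\v{s}'s expansion, whereas the kernel-power construction gets by with the classical unweighted estimates. Two small points you should make explicit (the paper glosses over them in the same way): the lower bound in Lemma~\ref{piu} holds only for $\delta(a)<\delta_r$, so the region $\{\delta\ge\delta_r\}$ must be handled by the trivial bound $\mu(B_D(a,r))\le\mu(D)\preceq C$, obtained by testing \eqref{eq:main} with $f_j\equiv1$; and summing lower bounds over lattice balls costs the finite-overlap factor $m$.
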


\section{Toeplits operators and skew Carleson measures on weighted Bergman spaces}

This section is devoted to the proof of our main Theorem~\ref{th:1.3}. We shall need the following preliminary result:

\begin{lemma}
\label{lmm_dual}
Let $D\Subset\C^n$ be a bounded strongly
pseudoconvex domain. Let $1<p<+\infty$, $-1<\alpha$,~$\alpha'<+\infty$ and put
$$
\b=\frac{\alpha}{p}+\frac{\alpha'}{p'}\;,
$$
where $p'$ the conjugate exponent of $p$.
Then the functional
$$
(f,g)_\b=\int_Df(z)\overline{g(z)}d\nu_\b(z)
$$
is a duality pairing between $A^{p}_{\alpha}(D)$ and $A^{p'}_{\alpha'}(D)$, where $\nu_\beta=\delta^\beta\nu$.
\end{lemma}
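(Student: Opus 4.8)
The plan is to show that the map $g\mapsto(\cdot,g)_\beta$ is a topological isomorphism of $A^{p'}_{\alpha'}(D)$ onto the dual $\bigl(A^p_\alpha(D)\bigr)^*$; by the symmetry of the construction the companion map $f\mapsto(f,\cdot)_\beta$ will then identify $A^p_\alpha(D)$ with $\bigl(A^{p'}_{\alpha'}(D)\bigr)^*$, which is what is meant by a duality pairing.

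First I would check that the pairing is well defined and bounded. Writing $\beta=\frac{\alpha}{p}+\frac{\alpha'}{p'}$ as $\delta^\beta=\delta^{\alpha/p}\,\delta^{\alpha'/p'}$ and applying H\"older's inequality with exponents $p$ and $p'$ gives
\[
|(f,g)_\beta|\le\int_D\bigl(|f|\delta^{\alpha/p}\bigr)\bigl(|g|\delta^{\alpha'/p'}\bigr)\,d\nu\le\|f\|_{p,\alpha}\,\|g\|_{p',\alpha'}\;,
\]
so each $g\in A^{p'}_{\alpha'}(D)$ defines a bounded functional of norm $\preceq\|g\|_{p',\alpha'}$. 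I would also record the consistency fact that, since $\alpha,\alpha'>-1$ and $\frac1p+\frac1{p'}=1$, one has $\beta>-1$, so that the weighted kernel $K_\beta$ and the projection $P_\beta$ are available.

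The core of the argument is surjectivity. Given $\Phi\in\bigl(A^p_\alpha(D)\bigr)^*$, I would extend it by Hahn--Banach to $L^p(\nu_\alpha)$ and use $\bigl(L^p(\nu_\alpha)\bigr)^*=L^{p'}(\nu_\alpha)$ to obtain $h\in L^{p'}(\nu_\alpha)$ with $\Phi(f)=\int_D f\,\overline h\,\delta^\alpha\,d\nu$ for all $f\in A^p_\alpha(D)$. The candidate representative is the holomorphic function $g=P_\beta(\delta^{\alpha-\beta}h)$, i.e.\ $g(z)=\int_D K_\beta(z,w)h(w)\delta(w)^\alpha\,d\nu(w)$. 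Using $K_\beta(w,z)=\overline{K_\beta(z,w)}$, Fubini's theorem, and the reproducing identity $P_\beta f=f$ for $f\in A^p_\alpha(D)$, a direct computation gives $(f,g)_\beta=\int_D\overline{h(w)}\,\delta(w)^\alpha\,(P_\beta f)(w)\,d\nu(w)=\Phi(f)$; the kernel bounds of Theorem~\ref{teo_stima} (together with Lemma~\ref{BKbasic}) are exactly what justify both the reproducing identity on $A^p_\alpha(D)$ and the interchange of integrals.

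The main obstacle is showing that this $g$ actually lies in $A^{p'}_{\alpha'}(D)$: this amounts to the weighted mapping property
\[
h\longmapsto\int_D K_\beta(\cdot,w)h(w)\delta(w)^\alpha\,d\nu(w)\colon\quad L^{p'}(\nu_\alpha)\longrightarrow L^{p'}(\nu_{\alpha'})\;,
\]
a Forelli--Rudin type estimate which I would prove by Schur's test with power test functions $\delta^s$, the required integral bounds being provided by Theorem~\ref{teo_stima} (note that $\alpha-\beta=\frac{\alpha-\alpha'}{p'}$ and $\alpha'-\beta=\frac{\alpha'-\alpha}{p}$, so the exponents fall in the admissible range dictated by the hypotheses $\alpha,\alpha'>-1$ and $1<p<+\infty$). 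Granting this, $g\mapsto(\cdot,g)_\beta$ is a bounded surjection, and running the identical argument with the roles of $(p,\alpha)$ and $(p',\alpha')$ exchanged---legitimate because $\beta$ is symmetric in the two pairs and $(p')'=p$---shows that $f\mapsto(f,\cdot)_\beta$ maps $A^p_\alpha(D)$ onto $\bigl(A^{p'}_{\alpha'}(D)\bigr)^*$. Finally I would deduce non-degeneracy from these two surjectivities (if $g\neq0$, Hahn--Banach produces a functional on $A^{p'}_{\alpha'}(D)$ not killing $g$, which by surjectivity is some $(f,\cdot)_\beta$, whence $(f,g)_\beta\neq0$, and symmetrically for $f$); together with the open mapping theorem this upgrades the bounded surjections to the desired topological isomorphisms, completing the identification.
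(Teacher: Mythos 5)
Your argument is correct in its essentials, but it takes a genuinely different --- and much heavier --- route than the paper does. The paper's proof is a five-line reduction to a quoted result: it takes as known the diagonal duality $\bigl(A^p_\alpha(D)\bigr)^*\cong A^{p'}_\alpha(D)$ under the pairing against $\nu_\alpha$, and then simply rewrites $(f,g)_\beta=\langle f,g\delta^{\beta-\alpha}\rangle$, noting that the choice of $\beta$ gives $(\beta-\alpha)p'+\alpha=\alpha'$, so that $g\mapsto g\delta^{\beta-\alpha}$ is an isometry of $L^{p'}(\nu_{\alpha'})$ onto $L^{p'}(\nu_\alpha)$; that is the whole proof. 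You instead reprove the duality from scratch (Hahn--Banach and $L^p(\nu_\alpha)$-duality to get a representative $h\in L^{p'}(\nu_\alpha)$, the holomorphic candidate $g=P_\beta(\delta^{\alpha-\beta}h)$, and a Schur-test Forelli--Rudin estimate built on Theorem~\ref{teo_stima} to show $g\in A^{p'}_{\alpha'}(D)$), which is the standard ``hard'' proof used for the ball by Zhu and by Pau--Zhao --- in effect, the proof hiding behind the theorem the paper quotes. What your route buys: it is self-contained, and it genuinely establishes surjectivity of $g\mapsto(\cdot,g)_\beta$, a point the paper's reduction glosses over; indeed $g\delta^{\beta-\alpha}$ is not holomorphic, so the paper's phrase ``$h=g\delta^{\beta-\alpha}\in A^{p'}_\alpha(D)$'' is an abuse of notation, and extracting a holomorphic representative in $A^{p'}_{\alpha'}(D)$ from the diagonal duality would require exactly your projection-plus-Schur step. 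What the paper's route buys is brevity, by outsourcing all the analysis to a known statement. One caveat on your write-up: the reproducing identity $P_\beta f=f$ for $f\in A^p_\alpha(D)$ does not follow from the size estimates of Theorem~\ref{teo_stima} and Lemma~\ref{BKbasic} alone; those give absolute convergence of the relevant integrals (note $A^p_\alpha(D)\subset A^1_\beta(D)$ by H\"older, since $(\beta-\alpha/p)p'=\alpha'>-1$), but the identity itself requires an approximation/density argument, since no dilation trick is available on a general strongly pseudoconvex domain --- so it should be flagged as a separate known ingredient, exactly as the paper itself implicitly treats it when using the same identity in \eqref{eq:rep}. Granting that ingredient, your Fubini interchange is justified by Tonelli together with the positive-kernel version of your Schur estimate, and the Schur test itself closes because the admissible window for the test powers $\delta^{-s}$ is nonempty precisely when $\alpha,\alpha'>-1$ (whence also $\beta>-1$), which is where the hypotheses enter.
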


\begin{proof}
The continuous dual of $A^{p}_{\alpha}(D)$ is $A^{p'}_{\alpha}(D)$, with the usual pairing
$$
\langle f,h\rangle=\int_{D}f(z)\overline{h(z)}d\nu_{\alpha}(z)\;.
$$
Therefore
$$
(f,g)_\b=\int_{D}f(z)\overline{g(z)}d\nu_\beta(z)=\int_D f(z)\overline{g(z)}\delta(z)^{\beta-\alpha}d\nu_{\alpha}(z)=\langle f, g\delta^{\beta-\alpha}\rangle
$$
is a duality pairing between $A^{p}_{\alpha}(D)$ and $A^{p'}_{\alpha'}(D)$ as soon as $h=g\delta^{\beta-\alpha}\in A^{p'}_{\alpha}(D)$, i.e., as soon as
$$
\int_D |g(z)|^{p'}\delta(z)^{(\beta-\alpha)p'}d\nu_{\alpha}(z)<+\infty
$$
which is true because the choice of $\beta$ yields $(\beta-\alpha)p'+\alpha=\alpha'$.
\end{proof}

Now we can prove Theorem~\ref{th:1.3}:

\begin{teorema}\label{teo_toeplitz}
Let $D\Subset\C^n$ be a bounded strongly
pseudoconvex domain.
Let $0<p_1$,~$p_2<+\infty$ and $-1<\alpha_1$,~$\alpha_2<+\infty$. Suppose that $\beta\in\R$ satisfies
\begin{equation}\label{eq_brutta}
n+1+\b>n\max\left\{1,\dfrac{1}{p_j}\right\}+\frac{1+\alpha_j}{p_j}
\end{equation}
for $j=1$,~$2$.
Put
$$
\lambda=1+\frac{1}{p_1}-\frac{1}{p_2}
$$
and, if $\lambda\ne 0$, put
$$
\gamma=\frac{1}{\lambda}\left(\b+\frac{\alpha_1}{p_1}-\frac{\alpha_2}{p_2}\right)\;.
$$
Then for any positive Borel measure $\mu$ on $D$ the following statements are equivalent:
\begin{itemize}
\item[(i)] $T_\mu^\beta\colon A^{p_1}_{\alpha_1}(D)\to A^{p_2}_{\alpha_2}(D)$ continuously;
\item[(ii)] $\mu$ is a $(\lambda,\gamma)$-skew Carleson measure.
\end{itemize}
Moreover, one has
$$
\|T_{\mu}^\b\|_{A^{p_1}_{\alpha_1}(D)\to A^{p_2}_{\alpha_2}(D)}\approx \|\mu\|_{\lambda,\gamma}\;.
$$
\end{teorema}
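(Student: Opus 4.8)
The plan is to reduce the operator statement to the product-form characterization of skew Carleson measures in Theorem~\ref{thm:1.1 AbaRai}, exploiting the self-duality of $T_\mu^\beta$ with respect to the weighted pairing of Lemma~\ref{lmm_dual}. The linchpin is a reproducing identity: for $f\in A^{p_1}_{\alpha_1}(D)$ and $g$ holomorphic in a suitable companion space, Fubini's theorem together with the reproducing property $g(w)=\int_D K_\beta(w,z)g(z)\delta(z)^\beta\,d\nu(z)$ should give
\[
(T_\mu^\beta f,g)_\beta=\int_D f(w)\overline{g(w)}\,d\mu(w),
\]
where $(\cdot,\cdot)_\beta$ is the pairing $\int_D\cdot\,\overline{\cdot}\,\delta^\beta\,d\nu$. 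First I would justify the interchange of integrals: under either (i) or (ii) the relevant double integral converges absolutely by the kernel integral estimate of Theorem~\ref{teo_stima}, so the identity holds for all admissible $f,g$.

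Assume first $p_2>1$ (with $p_1\in(0,+\infty)$ arbitrary). I set $\alpha_2':=p_2'(\beta-\alpha_2/p_2)$, $p_2'$ being the conjugate exponent; hypothesis~\eqref{eq_brutta} for $j=2$ is precisely the condition $\alpha_2'>-1$, so $A^{p_2'}_{\alpha_2'}(D)$ is a legitimate weighted Bergman space and, by Lemma~\ref{lmm_dual}, $(\cdot,\cdot)_\beta$ realizes it as the dual of $A^{p_2}_{\alpha_2}(D)$. Hence
\[
\|T_\mu^\beta f\|_{p_2,\alpha_2}\approx\sup\Bigl\{\Bigl|\int_D f\overline{g}\,d\mu\Bigr|:\|g\|_{p_2',\alpha_2'}\le 1\Bigr\}.
\]
Taking $k=2$ and $q_1=q_2=1$ in Theorem~\ref{thm:1.1 AbaRai} for the factors $f\in A^{p_1}_{\alpha_1}$ and $g\in A^{p_2'}_{\alpha_2'}$ produces exactly $\lambda=1/p_1+1/p_2'=1+1/p_1-1/p_2$ and $\gamma=\frac1\lambda(\alpha_1/p_1+\alpha_2'/p_2')=\frac1\lambda(\beta+\alpha_1/p_1-\alpha_2/p_2)$. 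Thus (ii), via Theorem~\ref{thm:1.1 AbaRai}, yields the product bound $\int_D|f||g|\,d\mu\preceq\|f\|_{p_1,\alpha_1}\|g\|_{p_2',\alpha_2'}$, hence the bilinear bound, hence (i), with $\|T_\mu^\beta\|\preceq\|\mu\|_{\lambda,\gamma}$.

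For the converse (i)$\Rightarrow$(ii) I cannot run Theorem~\ref{thm:1.1 AbaRai} backwards, since (i) only supplies the \emph{signed} bilinear estimate $|\int_D f\overline{g}\,d\mu|\preceq\|f\|_{p_1,\alpha_1}\|g\|_{p_2',\alpha_2'}$, while the characterization needs the positive product integral. I would therefore test on functions where the two coincide: choosing $f$ and $g$ to be appropriate powers of the normalized kernel $k_{\beta,a}$ makes $f\overline{g}=\delta(a)^\tau|k_{\beta,a}|^2\ge 0$, so bilinear and product integrals agree. Using Lemma~\ref{piu} to bound $|k_{\beta,a}|^2\approx\delta(a)^{-(n+1+\beta)}$ on $B_D(a,r)$ and Lemma~\ref{th:fact4}/Theorem~\ref{teo_stima} for the norms, I extract the pointwise bound $\hat\mu_{r,\lambda}(a)\,\delta(a)^{-\gamma\lambda}\preceq 1$, i.e.\ the $L^\infty$ condition of Theorem~\ref{carthetaCarluno} for $\lambda\ge 1$. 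When $\lambda<1$ the target is the $L^{1/(1-\lambda)}$ summability of Theorem~\ref{carthetaCarldue}; there I would test against lattice sums $\sum_k c_kf_{a_k}$ with $\mathbf{c}\in\ell^{p_1}$ (Lemma~\ref{th:fact4}) and recover summability of $\{\hat\mu_{r,\lambda}(a_k)\}$ by a sequence-space duality argument.

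The main obstacle is the range $p_2\le 1$, where Lemma~\ref{lmm_dual} and the supremum representation of the norm are unavailable, so I must argue directly. For (ii)$\Rightarrow$(i) I would discretize the integral defining $T_\mu^\beta f$ over an $r$-lattice (Lemma~\ref{uno}), replace the kernel and $f$ by their (sub)mean values on each Kobayashi ball via Lemmas~\ref{due} and~\ref{piu}, and estimate the resulting discrete operator by a Schur-type test ($\lambda\ge 1$) or Hölder's inequality ($\lambda<1$), the exponents dictated by Theorem~\ref{teo_stima}; the subadditivity of $t\mapsto t^{p_2}$ for $p_2\le 1$ is what keeps these estimates tractable. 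For (i)$\Rightarrow$(ii) the test-function scheme still applies once a lower bound for $|T_\mu^\beta f_a|$ on $B_D(a,r)$ is secured, which forces one to show that the kernel does not oscillate on a Kobayashi ball; this, together with the observation that \eqref{eq_brutta} for $j=1$ is exactly what places the test functions in $A^{p_1}_{\alpha_1}(D)$, is the delicate technical core. The degenerate case $\lambda=0$ (which forces $p_2<1$), where $\gamma$ is undefined and (ii) reduces to $\hat\mu_r\in L^1(D)$, I would treat as a limiting instance of the same direct estimates.
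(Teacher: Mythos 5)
Your (ii)$\Rightarrow$(i) argument is essentially the paper's own: for $p_2>1$ it is the same dual pairing (Lemma~\ref{lmm_dual}), the same reproducing identity, and the same application of Theorem~\ref{thm:1.1 AbaRai} with $k=2$, $q_1=q_2=1$; for $p_2\le1$ your lattice discretization with submean estimates, subadditivity of $t\mapsto t^{p_2}$, and H\"older matches the paper's treatment (the paper handles $p_2=1$ separately by a direct Fubini estimate via Theorem~\ref{teo_stima} and Lemma~\ref{th:last}, a variant of the same idea). The problems are in (i)$\Rightarrow$(ii), where you have two genuine gaps.

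First, for $\lambda\ge1$ with $p_2\le1$ (i.e.\ $p_1\le p_2\le 1$, a nonempty case) you require a lower bound for $|T_\mu^\beta f_a|$ on all of $B_D(a,r)$ and flag ``non-oscillation of the kernel'' as the unresolved delicate core. That phase-control problem is strictly harder than anything Lemma~\ref{piu} supplies (it bounds only the modulus $|K_\beta|$), and it is unnecessary. The paper's device is to take $f_a=K_\beta(\cdot,a)$ and evaluate at the \emph{center}: then $T_\mu^\beta f_a(a)=\int_D|K_\beta(a,w)|^2\,d\mu(w)$ has nonnegative integrand, so Lemma~\ref{piu} alone gives $T_\mu^\beta f_a(a)\succeq\mu\bigl(B_D(a,r)\bigr)\delta(a)^{-2(n+1+\beta)}$; the matching upper bound comes not from duality but from the submean inequality of Lemma~\ref{due} applied to the plurisubharmonic function $|T_\mu^\beta f_a|^{p_2}$, namely $|T_\mu^\beta f_a(a)|\preceq\delta(a)^{-(n+1+\alpha_2)/p_2}\|T_\mu^\beta f_a\|_{p_2,\alpha_2}$, valid for \emph{every} $p_2>0$. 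This single argument covers all $p_2$ uniformly and also makes your separate bilinear-form treatment of $p_2>1$ redundant.

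Second, for $\lambda<1$ your plan to ``test against lattice sums $\sum_k c_kf_{a_k}$ and recover summability by a sequence-space duality argument'' does not go through as stated: boundedness of $T_\mu^\beta$ only controls $\bigl\|\sum_k c_kT_\mu^\beta f_{a_k}\bigr\|_{p_2,\alpha_2}$, in which the terms may cancel, so one cannot extract the diagonal quantities $|T_\mu^\beta f_{a_k}(a_k)|$ from a single deterministic sum. The paper (following Luecking) randomizes: it sets $f_t=\sum_kc_kr_k(t)f_{a_k}$ with Rademacher functions $r_k$, integrates in $t$, and applies Khinchine's inequality to replace the norm of the sum by the square function $\bigl(\sum_k|c_k|^2|T_\mu^\beta f_{a_k}|^2\bigr)^{1/2}$; then the finite overlap of the lattice balls $B_k$ localizes this to $\sum_k|c_k|^{p_2}\int_{B_k}|T_\mu^\beta f_{a_k}|^{p_2}\,d\nu_{\alpha_2}$, and only at that point does the $\ell^{p_1/p_2}$--$\ell^{p_1/(p_1-p_2)}$ duality you invoke produce $\{\hat\mu_{r,\lambda}(a_k)\}\in\ell^{1/(1-\lambda)}$. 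Without the Rademacher/Khinchine step (or some substitute), the whole $\lambda<1$ half of (i)$\Rightarrow$(ii) is missing.
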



\begin{proof}
The proof is divided into several cases.
\smallskip

\noindent\textbf{(i)$\Rightarrow$(ii)} We consider two cases: $\lambda\geq 1$ and $\lambda< 1$.

\textsl{Case 1.} Assume $\lambda\geq 1$. Let $a\in D$ and consider $f_a=K_\b(\cdot,a)$. By \eqref{eq_brutta} with $j=1$, we get $(n+1+\beta)p_1>n+1+\alpha_1$, which is equivalent to $\alpha_1-\beta<(n+\beta+1)(p_1-1)$, so, by Theorem \ref{teo_stima}, for $a\in D$ we have that
\begin{equation}
\label{eq:tuno}
\|K_\b(\cdot, a)\|_{p_1,\alpha_1}^{p_1}\preceq \delta(a)^{n+1+\alpha_1-(n+1+\beta)p_1}\;;
\end{equation}
in particular, $f_a\in A^{p_1}_{\alpha_1}(D)$. We can then apply the Toeplitz operator to $f_a$ and consider the value of the resulting function for $z=a$:
\begin{equation}
\label{eq:tdue}
\begin{aligned}
T_\mu^\b f_a(a)&=\int\limits_{D}K_\b(a,w)f_a(w)d\mu(w)
=\int_D|K_\b(a,w)|^2d\mu(w)\\
&\geq\int_{B_D(a,r)}|K_\b(a,w)|^2d\mu(w)\succeq\frac{\mu(B_D(a,r))}{\delta(a)^{2(n+\b+1)}}
\end{aligned}
\end{equation}
as soon as $a$ is close enough to $\partial D$, where, in the last inequality, we used Lemma \ref{piu}.

Moreover, by Lemma \ref{due} 
\begin{equation}
\label{eq:ttre}
\begin{aligned}
T_\mu^\b f_a(a)
&=\bigl[|T_\mu^\b f_a(a)|^{p_2}\bigr]^{1/p_2}
\preceq \frac{1}{\nu\bigl(B_D(a,r)\bigr)^{1/p_2}}\left[\int_{B_D(a,r)}|T_\mu^\b f_a(\zeta)|^{p_2}d\nu(\zeta)\right]^{1/p_2}\\
&\preceq \frac{\delta(a)^{-\alpha_2/p_2}}{\nu\bigl(B_D(a,r)\bigr)^{1/p_2}}\left[\int_{B_D(a,r)}|T_\mu^\b f_a(\zeta)|^{p_2}\delta(\zeta)^{\alpha_2}d\nu(\zeta)\right]^{1/p_2}\\
&\preceq \delta(a)^{-(n+1+\alpha_2)/p_2}\|T_\mu^\beta f_a\|_{p_2,\alpha_2}\preceq \|T^\beta_\mu\|\delta(a)^{-(n+1+\alpha_2)/p_2}\|f_a\|_{p_1,\alpha_1}\;,
\end{aligned}
\end{equation}
where we used Lemma \ref{sei} and Lemma \ref{sette}.


Combining \eqref{eq:tuno}, \eqref{eq:tdue} and \eqref{eq:ttre} we conclude that
\begin{equation}
\label{eq:tqua}
\begin{aligned}
\mu(B_D(a,r))&\preceq \|T_\mu^\b\|\delta(a)^{(n+1+\b)+(n+1+\a1)/\p1-(n+1+\a2)/\p2}=\|T_\mu^\b\|\delta(a)^{(n+1+\gamma)\lambda}\\
&\approx\|T_\mu^\b\|\nu\bigl(B_D(a,r)\bigr)^{(n+1+\gamma)\lambda/(n+1)}\;.
\end{aligned}
\end{equation}
This means that $\mu$ is a geometric $\lambda\left(1+\frac{\gamma}{n+1}\right)$-Carleson measure, which, by Theorem \ref{carthetaCarluno}, is equivalent to $\mu$ being a $(\lambda,\gamma)$-skew Carleson measure. Moreover,
$$
\|\mu\|_{\lambda,\gamma}\preceq\|T_\mu^\b\|\;.
$$

\smallbreak

\textsl{Case 2.} Assume $\lambda<1$, that is $\p2<\p1$. In this case, we can adapt the proof of \cite{AbaRai}*{Proposition 3.4} and we report here the complete proof for the sake of completeness.

Let $\{a_k\}$ be an $r$-lattice in $D$, and $\{r_k\}$ a sequence of Rademacher functions (see \cite{Duren}*{Appendix A}). 
Set
\[
\tau=\frac{n+1+\beta}{2}-\frac{n+1+\alpha_1}{p_1}\;,
\]
and, for every $a\in D$, put $f_a=\delta(a)^\tau k_{\beta, a}$. Then Lemma \ref{th:fact4} implies that
\[
f_t=\sum_{k=0}^\infty c_kr_k(t)f_{a_k}
\]
belongs to $A^{p_1}_{\alpha_1}(D)$ for all $\mathbf{c}=\{c_k\}\in\ell^{p_1}$, and $\|f_t\|_{p_1,\alpha_1}\preceq\|\mathbf{c}\|_{p_1}$.

Since, by assumption, $T^\beta_\mu$ is bounded from $A^{p_1}_{\alpha_1}$ to $A^{p_2}_{\alpha_2}$ we have
\begin{equation*}\begin{aligned}
\|T^\beta_\mu f_t\|^{p_2}_{p_2,\alpha_2}
&=\int_D\left|\sum_{k=0}^\infty c_k r_k(t) T^\beta_\mu f_{a_k}(z)\right|^{p_2}\;d\nu_{\alpha_2}(z)\\
&\le\|T^\beta_\mu\|^{p_2}\|f_t\|^{p_2}_{p_1,\alpha_1}\preceq \|T^\beta_\mu\|^{p_2}\|\mathbf{c}\|^{p_2}_{p_1}\;.
\end{aligned}\end{equation*}
Integrating both sides on $[0,1]$ with respect to $t$ and using Khinchine's inequality (see, e.g., \cite{Luecking}) we obtain
\[
\int_D\left(\sum_{k=0}^\infty |c_k|^2|T^\beta_\mu f_{a_k}(z)|^2\right)^{p_2/2}\;d\nu_{\alpha_2}(z)\preceq \|T^\beta_\mu\|^{p_2}\|\mathbf{c}\|^{p_2}_{p_1}\;.
\]
Set $B_k=B_D(a_k,r)$.
We consider two cases: $p_2\ge 2$ and $0<p_2<2$.

If $p_2\ge 2$, using the fact that $\|\mathbf{a}\|_{p_2/2}\le\|\mathbf{a}\|_1$ for every $\mathbf{a}\in\ell^1$ we get
\begin{equation*}\begin{aligned}
\sum_{k=0}^\infty |c_k|^{p_2}\int_{B_k}|&T^\beta_\mu f_{a_k}(z)|^{p_2}\;d\nu_{\alpha_2}(z)\\
&\le
\int_D\left(\sum_{k=0}^\infty |c_k|^2|T^\beta_\mu f_{a_k}(z)|^2 \chi_{B_k}(z)\right)^{p_2/2}\;d\nu_{\alpha_2}(z)\\
&\le \int_D\left(\sum_{k=0}^\infty |c_k|^2|T^\beta_\mu f_{a_k}(z)|^2\right)^{p_2/2}\;d\nu_{\alpha_2}(z)\;.
\end{aligned}\end{equation*}
If instead $0<p_2<2$, using H\"older's inequality, we obtain
\begin{equation*}
\begin{aligned}
\sum_{k=0}^\infty |c_k|^{p_2}\int_{B_k}&|T^\beta_\mu f_{a_k}(z)|^{p_2}\;d\nu_{\alpha_2}(z)\\
&\le
\int_D\left(\sum_{k=0}^\infty |c_k|^2|T^\beta_\mu f_{a_k}(z)|^2\right)^{\frac{p_2}{2}}\left(\sum_{k=0}^\infty\chi_{B_k}(z)\right)^{1-\frac{p_2}{2}}\;d\nu_{\alpha_2}(z)\\
&\preceq\int_D\left(\sum_{k=0}^\infty |c_k|^2|T^\beta_\mu f_{a_k}(z)|^2\right)^{p_2/2}\;d\nu_{\alpha_2}(z)\;,
\end{aligned}
\end{equation*}
where we used the fact that each $z\in D$ belongs to no more than $m$ of the $B_k$.

Summing up, for any $p_2>0$ we have
\[
\sum_{k=0}^\infty |c_k|^{p_2}\int_{B_k}|T^\beta_\mu f_{a_k}(z)|^{p_2}\;d\nu_{\alpha_2}(z)\preceq \|T^\beta_\mu\|^{p_2}\|\mathbf{c}\|^{p_2}_{p_1}\;.
\]
Now Lemmas \ref{sei}, \ref{sette} and \ref{due} yield
\[
|T^\beta_\mu f_{a_k}(a_k)|^{p_2}
\preceq \delta(a_k)^{-(n+1+\alpha_2)} \int_{B_k} |T^\beta_\mu f_{a_k}(z)|^{p_2}\;d\nu_{\alpha_2}(z)\;,
\]
and so we have
\[
\sum_{k=0}^\infty |c_k|^{p_2}\delta(a_k)^{n+1+\alpha_2}|T^\beta_\mu f_{a_k}(a_k)|^{p_2}\preceq \|T^\beta_\mu\|^{p_2}\|\mathbf{c}\|^{p_2}_{p_1}\;.
\]
On the other hand, using Lemmas \ref{BKbasic} and \ref{piu}, we obtain
\begin{equation*}\begin{aligned}
T^\beta_\mu f_{a_k}(a_k)
&=\delta(a_k)^\tau \int_D K_\beta(a_k,w) k_{\beta,a_k}(w)\;d\mu(w)\\
&\succeq\delta(a_k)^{\tau+\frac{n+1+\beta}{2}}\int_D |K_\beta(a_k,w)|^{2} \;d\mu(w)\\
&\ge \delta(a_k)^{n+1+\beta-\frac{n+1+\alpha_1}{p_1}}\int_{B_D(a_k,r)} |K_\beta(a_k,w)|^{2} \;d\mu(w)\\
&\succeq \delta(a_k)^{n+1+\beta-\frac{n+1+\alpha_1}{p_1}}\frac{\mu\bigl(B_D(a_k,r)\bigr)}{\delta(a_k)^{2(n+1+\beta)}}=\frac{\mu\bigl(B_D(a_k,r)\bigr)}{\delta(a_k)^{n+1+\beta + \frac{n+1+\alpha_1}{p_1}}}\;.
\end{aligned}\end{equation*}
Putting all together we get
\[
\sum_{k=0}^\infty |c_k|^{p_2}\left(\frac{\mu\bigl(B_D(a_k,r)\bigr)}{\delta(a_k)^{(n+1+\gamma)\lambda}}\right)^{p_2}\preceq \|T^\beta_\mu\|^{p_2}\|\mathbf{c}\|^{p_2}_{p_1}\;,
\]
since
\[
n+1+\beta + \frac{n+1+\alpha_1}{p_1} - \frac{n+1+\alpha_2}{p_2} = (n+1+\gamma)\lambda\;.
\]
Now, set $\mathbf{d}=\{d_k\}$, where
\[
d_k=\frac{\mu\bigl(B_D(a_k,r)\bigr)}{\delta(a_k)^{(n+1+\gamma)\lambda}}\;.
\]
Then by duality we get $\{d_k^{p_2}\}\in\ell^{p_1/(p_1-p_2)}$ with $\|\{d_k^{p_2}\}\|_{p_1/(p_1-p_2)}\preceq \|T^\beta_\mu\|^{p_2}$, because $p_1/(p_1-p_2)$ is the conjugate exponent of $p_1/p_2>1$. This means that $\mathbf{d}\in\ell^{p_1p_2/(p_1-p_2)}=\ell^{1/(1-\lambda)}$ with
\[
\|\mathbf{d}\|_{\frac{1}{1-\lambda}}\preceq \|T^\beta_\mu\|\;,
\]
and the assertion then follows from Theorem \ref{carthetaCarldue} (notice that the proof in \cite{HuLvZhu} that $\{\hat\mu_{r,\lambda\theta}(a_k)\}\in\ell^{\frac{1}{1-\lambda}}$ implies $\hat\mu_r \delta^{-\lambda\gamma}\in L^{\frac{1}{1-\lambda}}(D)$, where $\theta=1+\frac{\gamma}{n+1}$, holds also for $\lambda\le 0$).

\bigskip

\noindent \textbf{(ii)$\Rightarrow$(i)} We consider three cases: $\p2>1$, $\p2=1$ and $0<\p2<1$.

\textsl{Case 1.} If $\p2>1$, let $\p2'>1$ be the conjugate exponent of $\p2$, and choose $\a2'\in\R$ so that
\begin{equation}\label{eq:sei}
\b=\frac{\a2}{\p2}+\frac{\a2'}{\p2'}\;.
\end{equation}
An easy computation shows that $\alpha'_2=\alpha_2+(\beta-\alpha_2)p_2'$, and then $\alpha'_2>-1$ follows from \eqref{eq_brutta} for $j=2$.

Take $f\in A^{p_1}_{\alpha_1}(D)$ and $h\in A^{p'_2}_{\alpha'_2}(D)$. Then
\begin{equation}
\begin{aligned}
(T_\mu^\b f, h)_\beta&=\int_{D}\overline{h(z)}\int_{D}K_\beta(z,w)f(w)d\mu(w)d\nu_\beta(z)\\
&=\int_{D}\overline{\int_{D}K_\beta(w,z)h(z)d\nu_\beta(z)}f(w)d\mu(w)=\int_D \overline{h(w)}f(w)d\mu(w)\;.
\end{aligned}
\label{eq:rep}
\end{equation}
Therefore, as $\mu$ is $(\lambda,\gamma)$-skew Carleson, by Theorem \ref{thm:1.1 AbaRai}, we have
$$
|(T_\mu^\b f, h)_\beta)|\preceq\|\mu\|_{\lambda,\gamma}\|f\|_{\p1,\a1}\|h\|_{\p2',\a2'}
$$
because, by our hypotheses,
$$
\lambda=\frac{1}{\p1}+\frac{1}{\p2'}\qquad\textrm{and}\qquad \gamma=\frac{1}{\lambda}\left(\frac{\a1}{\p1}+\frac{\a2'}{\p2'}\right)\;.
$$
As this holds for every $h\in A^{p'_2}_{\alpha'_2}(D)$, that is, by Lemma \ref{lmm_dual}, for every continuous functional on $A^{p_2}_{\alpha_2}(D)$,  we conclude that
$$
\|T_\mu^\b f\|_{\p2,\a2}\preceq \|\mu\|_{\lambda,\gamma}\|f\|_{\p1,\a1}\;,
$$
that is $T_{\mu}^\b$ is bounded from $A^{p_1}_{\alpha_1}(D)$ to $A^{p_2}_{\alpha_2}(D)$ and $\|T_\mu^\b\|\preceq\|\mu\|_{\lambda,\gamma}$.

\smallskip

\textsl{Case 2.} If $p_2=1$, that is $\lambda=\frac{1}{p_1}$, condition \eqref{eq_brutta} for $j=2$ implies $\beta-\a2>0$. Take $f\in A^{p_1}_{\alpha_1}(D)$. Then
$$
\begin{aligned}
\|T_\mu^\b f\|_{1,\a2}&\leq \int_{D}\int_{D}|K_\beta(z,w)||f(w)|d\mu(w)d\nu_{\a2}(z)\\
&=\int_{D}|f(w)|\int_{D}|K_\beta(z,w)|\delta(z)^{\a2-\b}d\nu_\b(z)\;d\mu(w)
\preceq \int_{D}|f(w)|\delta(w)^{\a2-\b}d\mu(w)
\end{aligned}
$$
by Theorem~\ref{teo_stima}.

Now, as $\mu$ is $(\lambda,\gamma)$-skew Carleson, Lemma \ref{th:last} implies that $\delta^{\alpha_2-\beta}\mu$ is $\left(\dfrac{1}{p_1},\alpha_1\right)$-skew Carleson, with $\|\delta^{\alpha_2-\beta}\mu\|_{1/p_1,\alpha_1}\approx\|\mu\|_{\lambda,\gamma}$. Theorems~\ref{carthetaCarluno} and~\ref{carthetaCarldue} then implies that $\delta^{\alpha_2-\beta}\mu$ is $(p_1,1;\alpha_1)$-skew Carleson, and so we obtain
$$
\|T_\mu^\beta f\|_{1,\alpha_2}\preceq\|\mu\|_{\lambda,\gamma}\|f\|_{p_1,\alpha_1}\;,
$$
as desired.

\smallskip

\textsl{Case 3.} If $0<\p2<1$, thanks to Lemma \ref{uno} we can find a $r$-lattice $\{a_k\}$ and $m\in\N$ such that for every $z\in D$ there exist at most $m$ values of $k$ such that $z\in B_D(a_k,R)$, where $R=\frac{1}{2}(1+r)$. Put $B_k=B_D(a_k,r)$ and $\widetilde{B}_k=B_D(a_k,R)$.

By Lemmas \ref{sei}, \ref{sette} and \ref{due} 
for $w\in B_k$ we have
$$
|f(w)|^{\p1}\preceq \frac{1}{\nu_{\a1}(B_k)}\int_{\widetilde{B}_k}|f(\zeta)|^{\p1}d\nu_{\a1}(\zeta)
$$
and
$$
|K_\b(z,w)|^{\p2}\preceq \frac{1}{\nu_{\a2}(B_k)}\int_{\widetilde{B}_k}|K_\b(z,\zeta)|^{\p2}d\nu_{\a2}(\zeta)\;.
$$
Therefore, integrating on $B_k$ we get
$$
\begin{aligned}
\int_{B_k}|K_\b(z,w)|&|f(w)|d\mu(w)\\
&\preceq \frac{\mu({B}_k)}{\nu_{\alpha_1}(B_k)^{1/p_1}\nu_{\alpha_2}(B_k)^{1/p_2}}\left(\int_{\widetilde{B}_k}|f(\zeta)|^{p_1}d\nu_{\alpha_1}(\zeta)\right)^{1/p_1}\left(\int_{\widetilde{B}_k}|K_\b(z,\zeta)|^{p_2}d\nu_{\alpha_2}(\zeta)\right)^{1/p_2}\;.
\end{aligned}
$$
Since $\p2<1$, summing over $k$ we get
$$
|T_\mu^\b f(z)|^{\p2}\preceq\sum_{k=1}^\infty\frac{\mu({B}_k)^{\p2}}{\nu_{\alpha_1}(B_k)^{p_2/\p1}\nu_{\alpha_2}(B_k)}\left(\int_{\widetilde{B}_k}|f(\zeta)|^{\p1}d\nu_{\a1}(\zeta)\right)^{\frac{\p2}{\p1}}\int_{\widetilde{B}_k}|K_\b(z,\zeta)|^{\p2}d\nu_{\a2}(\zeta)\;.
$$
Integrating in $z$ over $D$ with respect to $\nu_{\a2}$ we obtain
\begin{equation}
\label{eq_appezzi}
\|T_\mu^\b f\|_{\p2,\a2}^{\p2}\preceq \sum_{k=1}^\infty\frac{\mu({B}_k)^{\p2}}{\delta(a_k)^{(n+1+\gamma)\lambda\p2}}\left(\int_{\widetilde{B}_k}|f(\zeta)|^{\p1}d\nu_{\a1}(\zeta)\right)^{\frac{\p2}{\p1}}\;,
\end{equation}
thanks to Lemma~\ref{sei}, Lemma \ref{sette} and Theorem \ref{teo_stima}, that we can apply because of \eqref{eq_brutta} for $j=2$.

Now, if $\lambda\geq 1$ we have that
$$
\mu\left({B}_k\right)\preceq \|\mu\|_{\lambda,\gamma}\delta(a_k)^{(n+1+\gamma)\lambda}\;,
$$
and so \eqref{eq_appezzi} yields
$$
\|T_\mu^\b f\|_{\p2,\a2}^{\p2}\preceq\|\mu\|_{\lambda,\gamma}^{p_2}\sum_{k=1}^\infty\left(\int_{\widetilde{B}_k}|f(\zeta)|^{p_1}d\nu_{\alpha_1}(\zeta)\right)^{\frac{p_2}{p_1}}\preceq\|\mu\|_{\lambda,\gamma}^{\p2}\|f\|_{\p1,\a1}^{\p2}\;.$$

On the other hand, if $\lambda<1$ (that is $p_1/p_2>1$), by H\"older inequality we have
$$
\begin{aligned}
\sum_{k=1}^\infty\frac{\mu({B}_k)^{\p2}}{\delta(a_k)^{(n+1+\gamma)\lambda\p2}}&\left(\int_{\widetilde{B}_k}|f(\zeta)|^{\p1}d\nu_{\a1}(\zeta)\right)^{\frac{\p2}{\p1}}\\
&\!\!\!\!\leq\left(\sum_{k=1}^\infty\left(\frac{\mu({B}_k)}{\delta(a_k)^{(n+1+\gamma)\lambda}}\right)^\frac{\p1\p2}{\p1-\p2}\right)^\frac{\p1-\p2}{\p1}\left(\sum_{k=1}^\infty\int_{\widetilde{B}_k}|f(\zeta)|^{\p1}d\nu_{\a1}(\zeta)\right)^{\frac{\p2}{\p1}}\;.
\end{aligned}
$$
Now, the proof of the implication (b)$\Rightarrow$(c) in \cite{HuLvZhu}*{Lemma 2.5} applied with $s=-\gamma\lambda$ and $p=p_1p_2/(p_1-p_2)$ yields
$$
\left\{\frac{\mu({B}_k)}{\delta(a_k)^{(n+1+\gamma)\lambda}}\right\}_{k\geq1}\in \ell^{\frac{p_1p_2}{p_1-p_2}}
$$
and
$$
\left\|\left\{\frac{\mu({B}_k)}{\delta(a_k)^{(n+1+\gamma)\lambda}}\right\}_{k\geq1}\right\|_{\ell^{\frac{p_1p_2}{p_1-p_2}}}\approx\|\mu\|_{\lambda,\gamma}\;.
$$
So
$$
\|T_\mu^\b f\|_{p_2,\alpha_2}^{p_2}\preceq \|\mu\|_{\lambda,\gamma}^{p_2}\|f\|_{p_1,\alpha_1}^{p_2}\;,
$$
and we are done in this case too.
\end{proof}


\section{Compact Toeplitz operators and vanishing skew Carleson measures}

In this section we shall prove a version of Theorem~\ref{teo_toeplitz} concerning compact Toeplitz operators and vanishing skew-Carleson measures. The only interesting case is $\lambda\ge 1$, because for $\lambda<1$ (that is $p_2<p_1$) all $(\lambda,\gamma)$-skew Carleson measures are vanishing (Theorem~\ref{carthetaCarldue}) and all continuous operators from $A^{p_1}_{\alpha_1}(D)$ to $A^{p_2}_{\alpha_2}(D)$ are compact (see, e.g., \cite{LiTz}*{Proposition 2.c.3}).

To deal with the case $\lambda\ge 1$ we shall need the following version of Theorem~\ref{thm:1.1 AbaRai}, whose proof is analogous to the proof of \cite{PZ}*{Theorem~4.1}:

\begin{teorema}\label{teo_vanish_prod}
Let $D\Subset\C^n$ be a bounded strongly
pseudoconvex domain, and let $\mu$ be a positive finite Borel measure on $D$. Fix an integer $k\geq 1$, and let $0<p_j$,~$q_j<+\infty$ and $-1<\alpha_j<+\infty$ be given for $j=1,\ldots, k$. Set
\[
\lambda = \sum_{j=1}^k\frac{q_j}{p_j}\qquad\mathit{and}\qquad\gamma=\frac{1}{\lambda}\sum_{j=1}^k \frac{\alpha_j q_j}{p_j}\;.
\]
Assume that $\lambda\ge 1$. Then the following statements are equivalent:
\begin{itemize}
\item[(i)] $\mu$ is a vanishing $(\lambda,\gamma)$-skew Carleson measure.
\item[(ii)]  For any sequence $\{f_{1,\ell}\}_{\ell}$ in the unit ball of $A^{p_1}_{\alpha_1}(D)$ converging to $0$ uniformly on compact sets in~$D$ we have
\[
\lim_{\ell\to\infty}F(\ell)=0\;,
\]
where
\[
F(\ell)=\sup\left\{\int_{D}|f_{1,\ell}(z)|^{q_1}\prod_{j=2}^k|f_j(z)|^{q_j}d\mu(z)\biggm| \|f_j\|_{p_j,\alpha_j}\leq 1,\; j=2,\ldots, k\right\}\;.
\]
\item[(iii)] For any $k$ sequences $\{f_{1,\ell}\},\ldots, \{f_{k,\ell}\}$ in the unit balls of $A^{p_1}_{\alpha_1}(D),\ldots, A^{p_k}_{\alpha_k}(D)$, respectively, which are all convergent to $0$ uniformly on compact sets in $D$, we have
\[
\lim_{\ell\to\infty}\int_{D}|f_{1,\ell}(z)|^{q_1}\cdots|f_{k,\ell}(z)|^{q_k}d\mu(z)=0\;.
\]
\end{itemize}
\end{teorema}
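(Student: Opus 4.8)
The plan is to prove the cycle $(\mathrm{i})\Rightarrow(\mathrm{ii})\Rightarrow(\mathrm{iii})\Rightarrow(\mathrm{i})$, reusing the non-vanishing product characterization of Theorem~\ref{thm:1.1 AbaRai} together with the geometric descriptions of (vanishing) skew Carleson measures in Theorems~\ref{carthetaCarluno} and~\ref{carthetavanCarl}. The implication $(\mathrm{ii})\Rightarrow(\mathrm{iii})$ is immediate: given $k$ sequences $\{f_{j,\ell}\}$ in the respective unit balls, all converging to $0$ uniformly on compacta, the integral $\int_D|f_{1,\ell}|^{q_1}\prod_{j=2}^k|f_{j,\ell}|^{q_j}\,d\mu$ is one of the competitors in the supremum defining $F(\ell)$ (since $\|f_{j,\ell}\|_{p_j,\alpha_j}\le 1$ for $j\ge 2$), hence it is bounded by $F(\ell)$, which tends to $0$ by~$(\mathrm{ii})$.

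For $(\mathrm{i})\Rightarrow(\mathrm{ii})$ I would argue by splitting $D$ into a boundary layer and a compact core. Fix $\epsilon>0$ and let $C$ be the constant of Lemma~\ref{sette}. By the vanishing hypothesis (Theorem~\ref{carthetavanCarl}, case $\lambda\ge 1$) choose $\eta>0$ so small that $\hat\mu_{r,\lambda}(z)\delta(z)^{-\gamma\lambda}<\epsilon$ whenever $\delta(z)<\frac{C}{1-r}\eta$, and set $\mu_0=\mu|_{\{\delta<\eta\}}$. If $\delta(z)\ge\frac{C}{1-r}\eta$ then Lemma~\ref{sette} gives $\delta(w)\ge\eta$ for every $w\in B_D(z,r)$, so $\mu_0\bigl(B_D(z,r)\bigr)=0$; if $\delta(z)<\frac{C}{1-r}\eta$ then $\mu_0\le\mu$ yields $\mu_0\bigl(B_D(z,r)\bigr)\nu\bigl(B_D(z,r)\bigr)^{-\lambda}\delta(z)^{-\gamma\lambda}\le\hat\mu_{r,\lambda}(z)\delta(z)^{-\gamma\lambda}<\epsilon$. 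Hence $\|\mu_0\|_{\lambda,\gamma}\le\epsilon$, and applying the quantitative inequality of Theorem~\ref{thm:1.1 AbaRai} to $\mu_0$ bounds the boundary-layer part of the integral by $C'\epsilon\prod_j\|f_j\|_{p_j,\alpha_j}^{q_j}\le C'\epsilon$, uniformly in $\ell$ and in the competitors. On the core $K=\{\delta\ge\eta\}$ the submean estimate (Lemma~\ref{due}) and Lemma~\ref{sette} give $\sup_K|f_j|\le C_K\|f_j\|_{p_j,\alpha_j}\le C_K$ for $j\ge 2$, so that
\[
\int_K|f_{1,\ell}|^{q_1}\prod_{j=2}^k|f_j|^{q_j}\,d\mu\le C_K'\,\mu(K)\,\Bigl(\sup_K|f_{1,\ell}|\Bigr)^{q_1},
\]
which tends to $0$ as $\ell\to\infty$ uniformly over the unit balls of $f_2,\dots,f_k$, since $f_{1,\ell}\to0$ uniformly on $K$. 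Combining the two contributions gives $\limsup_\ell F(\ell)\le C'\epsilon$, and letting $\epsilon\to0$ yields~$(\mathrm{ii})$.

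For $(\mathrm{iii})\Rightarrow(\mathrm{i})$ I would argue by contraposition with the test functions already used in the proof of Theorem~\ref{teo_toeplitz}. If $\mu$ is not vanishing $(\lambda,\gamma)$-skew Carleson, then by Theorem~\ref{carthetavanCarl} there are $c>0$ and points $z_\ell\to\partial D$ with $\hat\mu_{r,\lambda}(z_\ell)\delta(z_\ell)^{-\gamma\lambda}\ge c$. Fix an auxiliary weight $\beta>-1$ large enough that Lemma~\ref{th:fact4} applies for every $j$, set $\tau_j=\frac{n+1+\beta}{2}-\frac{n+1+\alpha_j}{p_j}$ and $f_{j,\ell}=\delta(z_\ell)^{\tau_j}k_{\beta,z_\ell}$. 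By Lemma~\ref{BKbasic} and Theorem~\ref{teo_stima} these satisfy $\|f_{j,\ell}\|_{p_j,\alpha_j}\approx1$, and since the exponent $(n+1+\beta)-\frac{n+1+\alpha_j}{p_j}$ is positive for $\beta$ large they converge to $0$ uniformly on compacta (on a fixed compact set $K_\beta(\cdot,z_\ell)$ stays bounded while $\sqrt{K_\beta(z_\ell,z_\ell)}\approx\delta(z_\ell)^{-(n+1+\beta)/2}\to\infty$). On the other hand Lemma~\ref{piu} gives $\prod_j|f_{j,\ell}|^{q_j}\approx\delta(z_\ell)^{-\lambda(n+1+\gamma)}$ on $B_D(z_\ell,r)$, so that, using Lemma~\ref{sei},
\[
\int_D\prod_j|f_{j,\ell}|^{q_j}\,d\mu\ge\int_{B_D(z_\ell,r)}\prod_j|f_{j,\ell}|^{q_j}\,d\mu\succeq\hat\mu_{r,\lambda}(z_\ell)\delta(z_\ell)^{-\gamma\lambda}\ge c>0.
\]
After dividing each $f_{j,\ell}$ by its norm to land exactly in the unit ball (which preserves uniform convergence to $0$ on compacta, the norms being $\approx 1$), this contradicts~$(\mathrm{iii})$, proving~$(\mathrm{i})$.

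The main obstacle is the implication $(\mathrm{i})\Rightarrow(\mathrm{ii})$, and within it the uniformity of the estimate over the supremum defining $F(\ell)$: one must make the boundary-layer contribution small \emph{simultaneously} for all competitors $f_2,\dots,f_k$ in the unit balls and for all $\ell$. This is exactly what the decomposition into $\mu_0$ (controlled by the quantitative product inequality of Theorem~\ref{thm:1.1 AbaRai} with the small norm $\|\mu_0\|_{\lambda,\gamma}\le\epsilon$) and the compact core (controlled by uniform interior estimates) achieves. The delicate point is verifying that the restricted measure $\mu_0$ genuinely has small skew Carleson \emph{norm}, and not merely a small Berezin transform near the boundary; this is where Lemma~\ref{sette} is essential, since it forces each Kobayashi ball to lie either entirely inside the boundary layer or entirely inside the core, ruling out a problematic intermediate regime.
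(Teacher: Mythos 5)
Your proof is correct, and it matches the paper on two of the three implications while taking a genuinely different route on the third. The implication (ii)$\Rightarrow$(iii) is the same triviality as in the paper, and your (i)$\Rightarrow$(ii) is essentially the paper's argument: the paper also restricts $\mu$ to a boundary layer $D_r=\{\delta<r\}$ and applies the quantitative form of Theorem~\ref{thm:1.1 AbaRai} to the restricted measure, whose $(\lambda,\gamma)$-norm tends to $0$ --- a fact the paper asserts from the vanishing hypothesis and you actually verify via Lemma~\ref{sette}, which as you note forces each Kobayashi ball to sit entirely in the layer or entirely in the core; the only real difference is on the core, where the paper bounds the remaining product by applying Theorem~\ref{thm:1.1 AbaRai} once more with $f_1$ replaced by the constant function $1$, while you use interior sup-norm estimates from Lemmas~\ref{due} and~\ref{sette} together with $\mu(D)<+\infty$; both give the required uniformity over the competitors $f_2,\dots,f_k$. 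The genuine divergence is in (iii)$\Rightarrow$(i). The paper argues directly: it feeds into (iii) the functions $f_{j,a}=\delta(a)^{r_j}k_a^{\sigma_j}$, with \emph{integer} powers $\sigma_j$ of the \emph{unweighted} normalized kernel (integrality keeps them holomorphic, and the $\sigma_j$ are chosen so that they lie in the unit balls), recognizes the resulting integral as a power of $\delta(a)$ times $B^s\mu(a)$ with $s=\sum_j\sigma_jq_j$, and concludes by the Berezin-transform characterization (vi) of Theorem~\ref{carthetavanCarl}. You instead argue by contraposition, testing with first powers of the \emph{weighted} kernel $k_{\beta,z_\ell}$ for a large auxiliary weight $\beta$, and use Lemma~\ref{piu} to get a pointwise lower bound on $B_D(z_\ell,r)$ that contradicts the geometric vanishing condition directly. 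Both are sound: the paper's route needs only the unweighted kernel plus the Berezin equivalence, whereas yours bypasses the Berezin transform altogether at the price of invoking the Engli\v{s}-type estimates (Lemma~\ref{piu} with $\beta\ne0$), which the paper develops anyway for its main theorem.

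Two small inaccuracies in your write-up, neither fatal. First, Lemma~\ref{th:fact4} (about $\ell^p$ sums over lattices) is not the relevant statement for placing a single $f_{j,\ell}$ in $A^{p_j}_{\alpha_j}(D)$; Theorem~\ref{teo_stima} (for $\beta$ large) already does that. Second, Lemma~\ref{BKbasic} and Theorem~\ref{teo_stima} only yield $\|f_{j,\ell}\|_{p_j,\alpha_j}\preceq 1$, so your claim that the norms are $\approx 1$ --- which you use to normalize while preserving uniform convergence to $0$ on compacta --- requires the lower bound obtained by restricting the norm integral to $B_D(z_\ell,r)$ and applying Lemmas~\ref{piu} and~\ref{sei}; alternatively, simply rescale every $f_{j,\ell}$ by the fixed constant of the upper bound rather than by its actual norm, which keeps the contradiction intact without any lower bound on the norms.
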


\begin{proof}
%
Assume (i) is satisfied, that is $\mu$ is a vanishing $(\lambda,\gamma)$-skew Carleson measure.
Let $\{f_{1,\ell}\}_{\ell\in\N}$ be a sequence in the unit ball of $A^{p_1}_{\alpha_1}(D)$ which converges to $0$ uniformly on compact subsets of $D$, and for $j=2,\ldots, k$ let $f_j$ be an arbitrary function in the unit ball of $A^{p_j}_{\alpha_j}(D)$. Given $r>0$, let us set $D_r=\{z\in\ D\mid \delta(z)<r\}$. Then $\mu_r=\mu\vert_{D_r}$ is a $(\lambda,\gamma)$-skew Carleson measure, and
\[
\lim_{r\to 0}\|\mu_r\|_{\lambda,\gamma}= 0
\]
because $\mu$ is vanishing. Fix $\varepsilon>0$. Then if $r>0$ is small enough Theorem~\ref{thm:1.1 AbaRai} yields
\begin{equation}
\int_{D_r}\!\!\!\!|f_{1,\ell}(z)|^{q_1}\!|f_2(z)|^{q_2}\!\cdots |f_k(z)|^{q_k}d\mu(z)
= \int_{D}\!\!\!|f_{1,\ell}(z)|^{q_1}\!|f_2(z)|^{q_2}\!\cdots |f_k(z)|^{q_k}d\mu_r(z)\preceq \varepsilon\;.
\end{equation}
On the other hand, thanks to the uniform convergence of $f_{1,\ell}$ to $0$ on compact subsets of $D$, we can find $M\in\N$ such that for any $\ell>M$ we have $|f_{1,\ell}(z)|<\varepsilon$ for all $z\in D\setminus {D_r}$. Therefore applying again Theorem \ref{thm:1.1 AbaRai} we have
\begin{equation}
\begin{aligned}
\int_{D\setminus {D_r}}\!\!\!\!\!\!\!\!\!\!\!\!|f_{1,\ell}(z)|^{q_1}\!|f_2(z)|^{q_2}\cdots |f_k(z)|^{q_k}d\mu(z)
&\!\leq \varepsilon \!\!\int_{D}|f_2(z)|^{q_2}\cdots |f_k(z)|^{q_k}d\mu(z)\\
&=\!\varepsilon \!\!\int_{D}|1|^{q_1}|f_2(z)|^{q_2}\cdots |f_k(z)|^{q_k}d\mu(z)
\preceq\varepsilon\;.
\end{aligned}
\end{equation}
These last two estimates together imply (ii).

It is evident that (ii) implies (iii).
To prove that (iii) implies (i) we follow the same construction as in the proof of Theorem \ref{thm:1.1 AbaRai}. Choose $\sigma_1, \dots, \sigma_k\in\N^*$ such that
\[
p_j\sigma_j>\max\left\{1,1+ \frac{\alpha_j}{n+1}\right\}
\]
for all $j=1,\ldots,k$, and
\[
\sum_{j=1}^k q_j\sigma_j > \lambda\gamma\;,
\]
and set
\[
r_j=\frac{(n+1)\sigma_j}{2}-\frac{n+1+\alpha_j}{p_j}\;.
\]
For any $a\in D$ and $j=1,\ldots,k$, consider
\[
f_{j,a}(z)=\delta(a)^{r_j} k_a(z)^{\sigma_j}\;.
\]
Then, since $\alpha_j<(n+1)(p_j\sigma_j-1)$ by the choice of~$\sigma_j$ we know (Theorem~\ref{teo_stima}) that $\|f_{j,a}\|_{p_j,\alpha_j}\preceq 1$ for all $j=1,\dots, k$; moreover it is easy to see that
\[
\lim_{a\to\partial D} |f_{j,a}(z)|= 0
\]
uniformly on any compact subset of $D$. Therefore (iii) yields
\begin{equation}
\lim_{a\to\partial D} \int_D\prod_{j=1}^k |f_{j,a}(z)|^{q_j}\;d\mu(z) = 0\;.
\label{eq:nove}
\end{equation}
Now, we have
\[
\int_D\prod_{j=1}^k |f_{j,a}(z)|^{q_j}\;d\mu(z)
=\int_D|k_a(z)|^{\sum_j q_j\sigma_j}\delta(a)^{\sum_j q_jr_j}\;d\mu(z),
\]
and
\[
\sum_{j=1}^k q_jr_j=(n+1)\sum_{j=1}^k\left[\frac{q_j\sigma_j}{2}-\theta_j\frac{q_j}{p_j}\right]
=\frac{n+1}{2}\sum_{j=1}^k q_j\sigma_j-(n+1)\lambda\gamma\;.
\]
Therefore, setting $s=\sum_j \sigma_jq_j>\lambda\gamma$, \eqref{eq:nove} becomes
\[
\lim_{a\to\partial D}\delta(a)^{(n+1)\left(\frac{s}{2}-\lambda\gamma\right)}\int_D|k_a(z)|^s\;d\mu(z)
=
\lim_{a\to\partial D} \delta(a)^{(n+1)\left(\frac{s}{2}-\lambda\gamma\right)}B^s\mu(a)=0,
\]
where $B^s\mu$ is the Berezin transform of level $s$ of $\mu$, and so $\mu$ is a vanishing $(\lambda,\gamma)$-skew Carleson measure thanks to Theorem~\ref{carthetavanCarl}.
\end{proof}

We can now prove the following result:

\begin{teorema}\label{teo_toeplitz_vanishing}
Let $D\Subset\C^n$ be a bounded strongly
pseudoconvex domain.  Let $0<p_1\le p_2<+\infty$ and $-1<\alpha_1$,~$\alpha_2<+\infty$. Suppose that $\beta\in\R$ satisfies
\begin{equation}\label{eq_brutta_vanishing}
n+1+\b>n\max\left\{1,\dfrac{1}{p_j}\right\}+\frac{1+\alpha_j}{p_j}
\end{equation}
for $j=1$,~$2$. Put
$$
\lambda=1+\frac{1}{p_1}-\frac{1}{p_2}
$$
and
$$
\gamma=\frac{1}{\lambda}\left(\beta+\frac{\alpha_1}{p_1}-\frac{\alpha_2}{p_2}\right)\;.
$$
Then for any positive Borel measure $\mu$ on $D$ the following statements are equivalent:
\begin{itemize}
\item[(i)] $T_\mu^\beta\colon A^{p_1}_{\alpha_1}(D)\to A^{p_2}_{\alpha_2}(D)$ compactly;
\item[(ii)] $\mu$ is a vanishing $(\lambda,\gamma)$-skew Carleson measure.
\end{itemize}
\end{teorema}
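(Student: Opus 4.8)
The plan is to prove Theorem~\ref{teo_toeplitz_vanishing} by imitating the proof of Theorem~\ref{teo_toeplitz}, systematically replacing ``continuous'' by ``compact'' and ``$(\lambda,\gamma)$-skew Carleson'' by ``vanishing $(\lambda,\gamma)$-skew Carleson''. Since here $p_1\le p_2$ we always have $\lambda\ge 1$, so only the situations treated in Case~1 of the two implications of Theorem~\ref{teo_toeplitz} are relevant. Throughout I would use the standard criterion that $T^\beta_\mu\colon A^{p_1}_{\alpha_1}(D)\to A^{p_2}_{\alpha_2}(D)$ is compact if and only if $\|T^\beta_\mu f_\ell\|_{p_2,\alpha_2}\to 0$ for every sequence $\{f_\ell\}$ bounded in $A^{p_1}_{\alpha_1}(D)$ and converging to $0$ uniformly on compact subsets of $D$; one direction follows from Montel's theorem (a bounded sequence in a Bergman space has a subsequence converging locally uniformly to an element of the space), and the other is immediate.

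For the implication (i)$\Rightarrow$(ii) I would rerun the test-function computation of Case~1 of (i)$\Rightarrow$(ii) in Theorem~\ref{teo_toeplitz}, but with \emph{normalized} test functions. For $a\in D$ set $f_a=K_\beta(\cdot,a)$ and $g_a=f_a/\|f_a\|_{p_1,\alpha_1}$; by \eqref{eq:tuno} the family $\{g_a\}$ lies in the unit sphere of $A^{p_1}_{\alpha_1}(D)$, and, because condition \eqref{eq_brutta_vanishing} for $j=1$ forces $(n+1+\alpha_1)/p_1-(n+1+\beta)<0$, one has $g_a\to 0$ uniformly on compact subsets of $D$ as $a\to\partial D$. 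Compactness of $T^\beta_\mu$ then forces $\|T^\beta_\mu g_a\|_{p_2,\alpha_2}\to 0$ as $a\to\partial D$. Combining the lower bound \eqref{eq:tdue} and the submean upper bound \eqref{eq:ttre}, now applied to $g_a$, exactly as in the derivation of \eqref{eq:tqua} yields
\[
\frac{\mu\bigl(B_D(a,r)\bigr)}{\delta(a)^{(n+1+\gamma)\lambda}}\preceq\|T^\beta_\mu g_a\|_{p_2,\alpha_2}\;,
\]
and the right-hand side tends to $0$ as $a\to\partial D$. By \eqref{eq:hat} this says $\hat\mu_{r,\lambda}(a)\delta(a)^{-\gamma\lambda}\to 0$ as $a\to\partial D$, which by Theorem~\ref{carthetavanCarl} is precisely the statement that $\mu$ is a vanishing $(\lambda,\gamma)$-skew Carleson measure.

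For the implication (ii)$\Rightarrow$(i) the cleanest uniform argument is a splitting. Given $r>0$ write $D_r=\{\delta<r\}$ and $C_r=\{\delta\ge r\}$, and decompose $T^\beta_\mu=T^\beta_{\mu|_{C_r}}+T^\beta_{\mu|_{D_r}}$. The interior piece $T^\beta_{\mu|_{C_r}}$ is compact: since $C_r$ is compact in $D$, any bounded locally-uniformly-null sequence $\{f_\ell\}$ satisfies $\sup_{C_r}|f_\ell|\to 0$, while Theorem~\ref{teo_stima} bounds $\int_{C_r}\|K_\beta(\cdot,w)\|_{p_2,\alpha_2}\,d\mu(w)$ by a finite constant, so $\|T^\beta_{\mu|_{C_r}}f_\ell\|_{p_2,\alpha_2}\to 0$. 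On the other hand $\mu|_{D_r}$ is $(\lambda,\gamma)$-skew Carleson with $\|\mu|_{D_r}\|_{\lambda,\gamma}\to 0$ as $r\to 0$: this is the content of the vanishing hypothesis, using Lemma~\ref{sette} to see that a Kobayashi ball $B_D(z,r')$ meets $D_r$ only when $\delta(z)\preceq r$. Hence, by Theorem~\ref{teo_toeplitz}, the tail operator satisfies $\|T^\beta_{\mu|_{D_r}}\|\approx\|\mu|_{D_r}\|_{\lambda,\gamma}\to 0$, so $T^\beta_\mu$ is an operator-norm limit of the compact operators $T^\beta_{\mu|_{C_r}}$ and is therefore compact. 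For $p_2>1$ one may instead argue directly through the duality identity \eqref{eq:rep}, writing $\|T^\beta_\mu f_\ell\|_{p_2,\alpha_2}=\sup_{\|h\|_{p_2',\alpha_2'}\le 1}\bigl|\int_D\overline{h}\,f_\ell\,d\mu\bigr|$ via Lemma~\ref{lmm_dual}, and then invoking Theorem~\ref{teo_vanish_prod} with $k=2$ and $q_1=q_2=1$ to conclude that this supremum tends to $0$.

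The main obstacle I anticipate is the implication (ii)$\Rightarrow$(i) when $p_2\le 1$, where duality is unavailable and the triangle inequality must be replaced by the $p_2$-subadditivity of $\|\cdot\|_{p_2,\alpha_2}^{p_2}$; one must check that the splitting argument (equivalently, the lattice estimate \eqref{eq_appezzi} with coefficients $\mu(B_k)/\delta(a_k)^{(n+1+\gamma)\lambda}$ tending to $0$ along the boundary) still produces a genuine norm limit of compact operators, and that closedness of the space of compact operators under operator-norm limits persists in the quasi-Banach range $p_2<1$. Verifying the finiteness and the boundary decay of $\int_{C_r}\|K_\beta(\cdot,w)\|_{p_2,\alpha_2}\,d\mu(w)$, together with the fact that local uniform convergence of $\{f_\ell\}$ to $0$ does force $\sup_{C_r}|f_\ell|\to 0$, are the routine but essential technical points underpinning the interior estimate.
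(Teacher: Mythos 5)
Your implication (i)$\Rightarrow$(ii) follows the paper's own computation (kernel test functions, the estimates \eqref{eq:tdue}--\eqref{eq:ttre}, then Theorem~\ref{carthetavanCarl}), differing only in that you normalize by the norm instead of scaling by an explicit power of $\delta(a)$. Your implication (ii)$\Rightarrow$(i), by contrast, takes a genuinely different route: the paper shows directly that $\|T^\beta_\mu g_k\|_{p_2,\alpha_2}\to 0$ for bounded locally-uniformly-null sequences, using the duality identity \eqref{eq:rep} together with Theorem~\ref{teo_vanish_prod} when $p_2>1$ (this is exactly your parenthetical alternative), and the lattice estimate \eqref{eq_appezzi} split at a finite index when $0<p_2\le 1$; you instead approximate $T^\beta_\mu$ in operator norm by the operators $T^\beta_{\mu|_{C_r}}$, controlling the tail via $\|T^\beta_{\mu|_{D_r}}\|\preceq\|\mu|_{D_r}\|_{\lambda,\gamma}$ (the norm equivalence in Theorem~\ref{teo_toeplitz}) and the fact that $\|\mu|_{D_r}\|_{\lambda,\gamma}\to 0$, which is precisely what the paper establishes at the start of the proof of Theorem~\ref{teo_vanish_prod}. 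Where it works, your scheme is cleaner, since it treats the paper's two cases at once; but it has two genuine gaps.

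First, in (i)$\Rightarrow$(ii) you assert that $g_a=K_\beta(\cdot,a)/\|K_\beta(\cdot,a)\|_{p_1,\alpha_1}\to 0$ uniformly on compact sets ``by \eqref{eq:tuno}'' and the sign of the exponent. But \eqref{eq:tuno} is only an \emph{upper} bound for the norm; an upper bound that blows up does not force the norm itself to blow up, which is what local uniform decay of $g_a$ requires. You need the matching lower bound $\|K_\beta(\cdot,a)\|_{p_1,\alpha_1}\succeq\delta(a)^{(n+1+\alpha_1)/p_1-(n+1+\beta)}$ --- true, by integrating the lower estimate of Lemma~\ref{piu} over $B_D(a,r)$ and using Lemma~\ref{sei}, but nowhere stated in \eqref{eq:tuno} --- and you need it a second time when you divide \eqref{eq:tdue} by the norm to reach $\mu\bigl(B_D(a,r)\bigr)\delta(a)^{-(n+1+\gamma)\lambda}\preceq\|T^\beta_\mu g_a\|_{p_2,\alpha_2}$. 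The paper avoids this entirely by using $f_k=\delta(a_k)^{(n+1+\beta)-(n+1+\alpha_1)/p_1}K_\beta(\cdot,a_k)$: the upper bound alone gives norm-boundedness, and the positive power of $\delta(a_k)$ gives the local uniform decay.

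Second, your interior-compactness argument rests on $\|T^\beta_{\mu|_{C_r}}f_\ell\|_{p_2,\alpha_2}\le\int_{C_r}\|K_\beta(\cdot,w)\|_{p_2,\alpha_2}|f_\ell(w)|\,d\mu(w)$, i.e.\ Minkowski's integral inequality, which fails for $p_2<1$; you flag this as the main obstacle but leave it open, so as written the proposal proves the theorem only for $p_2\ge 1$. The missing ingredient is elementary: since $K_\beta$ is smooth off the boundary diagonal and $K_\beta(z,w)=\overline{K_\beta(w,z)}$, the kernel is bounded on $D\times C_r$ (this is the same boundedness the paper invokes on $L\times D$ in its own proof of (i)$\Rightarrow$(ii)), whence $|T^\beta_{\mu|_{C_r}}f_\ell(z)|\le C'_r\,\mu(D)\sup_{C_r}|f_\ell|$ for every $z\in D$, and therefore $\|T^\beta_{\mu|_{C_r}}f_\ell\|_{p_2,\alpha_2}\preceq\sup_{C_r}|f_\ell|\to 0$ for every $p_2>0$, with no Minkowski needed. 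As for the other point you flag, operator-norm limits of compact operators are indeed compact also between quasi-Banach spaces (the total-boundedness argument survives the quasi-triangle inequality at the cost of a constant), so once the interior piece is repaired your route does yield the full statement.
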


\begin{proof}

Assume that (i) holds. Since $T_\mu^\b$ is compact, it maps every bounded sequence in $A^{p_1}_{\alpha_1}(D)$ converging uniformly to $0$ on compact subsets of $D$ to a sequence strongly converging to $0$ in $A^{p_2}_{\alpha_2}(D)$.

We consider a sequence $\{a_k\}\in D$ such that $\lim\limits_{k\to+\infty}\delta(a_k)=0$ and we set
\[
f_k(z)
=
\delta(a_k)^{(n+1+\b)-(n+1+\a1)/\p1}K_\b(z,a_k)\;.
\]
Thanks to Theorem \ref{teo_stima}, we have that
\[
\|f_k\|_{\p1,\a1}^{\p1}\preceq 1\;.
\]
Moreover, for any $L\Subset D$ there exists a constant $C_1>0$ such that $|K_\beta|$ is bounded from above by $C_1$ on $L\times D$. Therefore for every $z\in L$ we have that
\[
|f_k(z)|\leq C_1\delta(a_k)^{n+1+\b - (n+1+\a1)/\p1}
\]
and so, since since our hypotheses give us that $(n+1+\beta)-(n+1+\a1)/\p1>0$, we get
\[
\lim_{k\to+\infty} \sup_{z\in L}|f_k(z)|\le  \lim_{k\to+\infty} C_1\delta(a_k)^{n+1+\b - (n+1+\a1)/\p1} =0\;.
\]
Hence the compactness of $T_\mu^\b$ implies
\begin{equation}\label{eq:11}
\lim_{k\to+\infty} \|T_\mu^\b f_k\|_{\p2,\a2}= 0\;.
\end{equation}
Now, the same computations as in the proof of the implication (i)$\Longrightarrow$(ii) of Theorem \ref{teo_toeplitz} yield
\[
\frac{\mu(B_D(a_k,r))}{\delta(a_k)^{(n+1+\b)+(n+1+\a1)/\p1}}
\preceq
T_\mu^\b f_k(a_k)
\]
and
\[
T_\mu^\b f_k(a_k)
\preceq
\delta(a_k)^{-(n+1+\a2)/\p2}\|T_\mu^\b f_k\|_{\p2,\a2}\;.
\]
Therefore
\[
\frac{\mu(B_D(a_k,r))}{\delta(a_k)^{(n+1+\gamma)\lambda}}\preceq\|T_\mu^\b f_k\|_{\p2,\a2}\;,
\]
which, together with \eqref{eq:11} and Theorem~\ref{carthetavanCarl}, implies that $\mu$ is a vanishing $(\lambda,\gamma)$-skew Carleson measure.

Conversely,
assume that $\mu$ is a vanishing $(\lambda,\gamma)$-skew Carleson measure with $\lambda\geq 1$, and let $\{g_k\}_{k\in\N}$ be a bounded sequence in $A^{p_1}_{\alpha_1}(D)$ converging uniformly to $0$ on compact subsets of $D$. We want to prove that the bounded sequence $\{T_\mu^\beta g_k\}_{k\in\N}\subset A^{p_2}_{\alpha_2}(D)$ converges strongly to $0$ in $A^{p_2}_{\alpha_2}(D)$. We consider two cases: $\p2>1$ and $0<\p2\le 1$.

If $\p2>1$ then, as in the proof of Theorem~\ref{teo_toeplitz}, thanks to Lemma \ref{lmm_dual}, denoting by $\p2'$ the conjugate exponent of $\p2$ and $\a2'$ the number defined in \eqref{eq:sei}, using \eqref{eq:rep}we have
\[
\|T_\mu^\b g_k\|_{\p2,\a2}
\approx
\sup_{\|h\|_{\p2',\a2'}\leq 1}|\langle h, T_\mu^\b g_k\rangle_\b|
\leq
\sup_{\|h\|_{\p2',\a2'}\leq 1}\int_{D}|h(z)||g_k(z)|d\mu(z)\;,
\]
and Theorem \ref{teo_vanish_prod} yields that the last integral converges to $0$ as $k$ tends to $+\infty$.

If $0<\p2\leq 1$, for any $r$-lattice $\{a_j\}$ we consider the associated balls $\{B_j=B_D(a_j,r)\}$ and $\{\widetilde B_j=B_D(a_j,R)\}$, where $R=(1+r)/2$, as usual. Using \eqref{eq_appezzi} we obtain that
\begin{equation}\label{eq:13}
\|T_\mu^\b g_k\|_{\p2,\a2}^{\p2}
\preceq
\sum_{j=1}^\infty \frac{\mu\left({B}_j\right)^{\p2}}{\delta(a_j)^{(n+1+\gamma)\lambda\p2}} \left(\int_{\widetilde{B}_j}|g_k(\zeta)|^{\p1} d\nu_{\a1}(\zeta)\right)^{{\p2}/{\p1}}\;.
\end{equation}
Let $\varepsilon>0$. Since $\mu$ is a vanishing $(\lambda,\gamma)$-skew Carleson measure by  Theorem~\ref{carthetavanCarl} there exists $j_0>0$
such that
\[
\frac{\mu({B}_j)}{\delta(a_j)^{(n+1+\gamma)\lambda}}<\varepsilon
\]
for all $j>j_0$. Choose $\delta_0>0$ such that $\tilde B_j\subset L=\{z\in D\mid \delta(z)\geq\delta_0\}\Subset D$ for all $j\le j_0$. We can then split the sum in the right-hand-side of \eqref{eq:13} into two parts. For the first part we have
\[
\sum_{j=1}^{j_0}\frac{\mu\left({B}_j\right)^{\p2}}{\delta(a_j)^{(n+1+\gamma)\lambda\p2}} \left(\int_{\widetilde{B}_j}|g_k(\zeta)|^{\p1} d\nu_{\a1}(\zeta)\right)^{\frac{\p2}{\p1}}
\!\!\preceq
\left(\sup_{L}|g_k|^{\p1}\right)^{\frac{\p2}{p1}} \!\sum_{j=1}^{j_0}
\frac{\mu\left({B}_j\right)^{\p2}\nu_{\alpha_1}({B}_j)^{\frac{\p2}{\p1}}}{\delta(a_j)^{(n+1+\gamma)\lambda\p2}}
\]
and clearly the right-hand-side converges to $0$ as $k$ tends to $+\infty$.

On the other hand we have
\begin{equation*}
\begin{aligned}
\sum_{j=j_0+1}^\infty \frac{\mu\left({B}_j\right)^{\p2}}{\delta(a_j)^{(n+1+\gamma)\lambda\p2}} \left(\int_{\widetilde{B}_j}|g_k(\zeta)|^{\p1} d\nu_{\a1}(\zeta)\right)^{\frac{\p2}{\p1}}
&<
\varepsilon^{\p2}\sum_{j=j_0+1}^\infty \left(\int_{\widetilde{B}_j}|g_k(\zeta)|^{\p1} d\nu_{\a1}(\zeta)\right)^{\frac{\p2}{\p1}}\\
&\preceq
\varepsilon^{\p2}\|g_k\|_{\p1,\a1}^{\p2}\preceq \varepsilon^{\p2}
\end{aligned}
\end{equation*}
because the sequence~$\{g_k\}$ is norm-bounded. Therefore $\lim\limits_{k\to+\infty}\|T_\mu^\b g_k\|_{\p2,\a2}= 0$, and this concludes the proof.
\end{proof}

\begin{bibdiv}
\begin{biblist}

\bib{A}{book} {
    AUTHOR = {M. Abate},
     TITLE = {Iteration theory of holomorphic maps on taut manifolds},
    SERIES = {Research and Lecture Notes in Mathematics. Complex Analysis
              and Geometry},
 PUBLISHER = {Mediterranean Press, Rende},
      YEAR = {1989},
     PAGES = {xvii+417},
}

\bib{A1}{incollection}{
    AUTHOR = {M. Abate},
     TITLE = {Angular derivatives in several complex variables},
 BOOKTITLE = {Real methods in complex and {CR} geometry},
    SERIES = {Lecture Notes in Math.},
    VOLUME = {1848},
     PAGES = {1--47},
 PUBLISHER = {Springer, Berlin},
      YEAR = {2004},
       URL = {https://doi.org/10.1007/978-3-540-44487-9_1}
}

 \bib{AbaSar}{article}{
  author={M. Abate},
   author={A. Saracco},
   title={Carleson measures and uniformly discrete sequences in strongly
   pseudoconvex domains},
   journal={J. Lond. Math. Soc. (2)},
   volume={83},
   date={2011},
   number={3},
   pages={587--605},
   issn={0024-6107},
   doi={10.1112/jlms/jdq092},
   URL={\tt http://www.dm.unipi.it/\lower3pt\hbox{\tt \char"7E}abate/libri/libriric/libriric.html}
}

 \bib{AbaRaiSar}{article}{
 author={M. Abate},
   author={J. Raissy},
   author={A. Saracco},
   title={Toeplitz operators and Carleson measures in strongly pseudoconvex
   domains},
   journal={J. Funct. Anal.},
   volume={263},
   date={2012},
   number={11},
   pages={3449--3491},
   issn={0022-1236},
   doi={10.1016/j.jfa.2012.08.027}
}

\bib{AbaRai}{article}{
author={M. Abate},
author={J. Raissy},
title={Skew Carleson measures in strongly pseudoconvex domains},
JOURNAL = {Complex Anal. Oper. Theory},
    VOLUME = {13},
      YEAR = {2019},
    NUMBER = {2},
     PAGES = {405--429},
      ISSN = {1661-8254},
      DOI = {10.1007/s11785-018-0823-4},
       URL = {https://doi.org/10.1007/s11785-018-0823-4}
}


\bib{C}{article}{
author={L. Carleson},
title = {Interpolations by bounded analytic functions and the corona problem},
journal = {Ann. of Math.},
volume = {76},
year = {1962},
pages = {547--559}
}

\bib{CM}{article}{
author = {J.A. Cima},
author = {P.R. Mercer},
title = {Composition operators between Bergman spaces
on convex domains in $\C ^n$},
journal = {J. Operator Theory},
volume = {33},
year = {1995},
pages = {363--369}
}

\bib{CW}{article}{
author = {J.A. Cima},
author = {W.R. Wogen},
title = {A Carleson measure theorem for the Bergman space on the ball},
journal = {J. Operator Theory},
volume = {7},
year = {1982},
pages = {157--165}
}

\bib{CMc}{article}{
author = {\v Z.\v Cu\v ckovi\'c},
author = {J.D. McNeal},
title = {Special Toeplitz operators on strongly pseudoconvex domains},
journal = {Rev. Mat. Iberoam.},
volume = {22},
year = {2006},
pages = {851--866},
}

\bib{Duren} {article}{
author = {P.L. Duren},
     title = {Extension of a theorem of {C}arleson},
   journal = {Bull. Amer. Math. Soc.},
    volume = {75},
      year = {1969},
     pages = {143--146},
      issn = {0002-9904},
       doi = {10.1090/S0002-9904-1969-12181-6},
       url = {https://doi.org/10.1090/S0002-9904-1969-12181-6}
}
		
\bib{DW}{article}{
author = {P.L. Duren},
author = {R. Weir},
title = {The pseudohyperbolic metric and Bergman spaces in the ball},
journal = {Trans. Amer. Math. Soc.},
volume = {359},
year = {2007},
pages = {63--76}
}

%
%

\bib{Eng}{article}{
author={M. Engli{\v{s}}},
   title={Toeplitz operators and weighted Bergman kernels},
   journal={J. Funct. Anal.},
   volume={255},
   date={2008},
   number={6},
   pages={1419--1457},
   issn={0022-1236},
   doi={10.1016/j.jfa.2008.06.026},
}

\bib{H}{article}{
    AUTHOR = {W.W. Hastings},
     TITLE = {A {C}arleson measure theorem for {B}ergman spaces},
   JOURNAL = {Proc. Amer. Math. Soc.},
    VOLUME = {52},
      YEAR = {1975},
     PAGES = {237--241},
      ISSN = {0002-9939},
       URL = {https://doi.org/10.2307/2040137}
}

\bib{Ho1}{article}{
    AUTHOR = {L. H\"{o}rmander},
     TITLE = {{$L^{2}$} estimates and existence theorems for the {$\bar
              \partial $}\ operator},
   JOURNAL = {Acta Math.},
    VOLUME = {113},
      YEAR = {1965},
     PAGES = {89--152},
      ISSN = {0001-5962},
      DOI = {10.1007/BF02391775},
       URL = {https://doi.org/10.1007/BF02391775}
}
		
\bib{Ho}{book}{
AUTHOR = {L. H\"{o}rmander},
     TITLE = {An introduction to complex analysis in several variables},
    SERIES = {North-Holland Mathematical Library},
    VOLUME = {7},
   EDITION = {Third},
 PUBLISHER = {North-Holland Publishing Co., Amsterdam},
      YEAR = {1990},
     PAGES = {xii+254},
      ISBN = {0-444-88446-7}
}

\bib{HuLvZhu}{article}{
   author={Z. Hu},
   author={X. Lv},
   author={K. Zhu},
   title={Carleson measures and balayage for Bergman spaces of strongly
   pseudoconvex domains},
   journal={Math. Nachr.},
   volume={289},
   date={2016},
   number={10},
   pages={1237--1254},
   issn={0025-584X},
   review={\MR{3520714}},
}

\bib{JP}{book} {
    AUTHOR = {M. Jarnicki},
    author = {P. Pflug},
     TITLE = {Invariant distances and metrics in complex analysis},
    SERIES = {De Gruyter Expositions in Mathematics},
    VOLUME = {9},
   EDITION = {extended},
 PUBLISHER = {Walter de Gruyter GmbH \& Co. KG, Berlin},
      YEAR = {2013},
     PAGES = {xviii+861},
      ISBN = {978-3-11-025043-5; 978-3-11-025386-3},
         DOI = {10.1515/9783110253863},
       URL = {https://doi.org/10.1515/9783110253863}
}

\bib{Ka}{article}{
author = {H.T. Kaptano\u glu},
title = {Carleson measures for Besov spaces on the ball with applications},
journal = {J. Funct. Anal.},
volume = {250},
year = {2007},
pages = {483--520}
}

\bib{K}{book}{
    AUTHOR = {S. Kobayashi},
     TITLE = {Hyperbolic complex spaces},
    SERIES = {Grundlehren der Mathematischen Wissenschaften [Fundamental
              Principles of Mathematical Sciences]},
    VOLUME = {318},
 PUBLISHER = {Springer-Verlag, Berlin},
      YEAR = {1998},
     PAGES = {xiv+471},
      ISBN = {3-540-63534-3},
       DOI = {10.1007/978-3-662-03582-5},
       URL = {https://doi.org/10.1007/978-3-662-03582-5}
}

\bib{Kr}{book}{
AUTHOR = {S.G. Krantz},
     TITLE = {Function theory of several complex variables},
    SERIES = {The Wadsworth \& Brooks/Cole Mathematics Series},
   EDITION = {Second},
 PUBLISHER = {Wadsworth \& Brooks/Cole Advanced Books \& Software, Pacific
              Grove, CA},
      YEAR = {1992},
     PAGES = {xvi+557},
      ISBN = {0-534-17088-9}
}

\bib{Li}{article}{
AUTHOR = {H. Li},
 TITLE = {BMO, VMO and Hankel operators on the Bergman space of strictly pseudoconvex domains}, JOURNAL = {J. Funct. Anal.},
volume ={106},
Year={1992},
pages={375--408}
}

\bib{LiLu}{article}{
author = {H. Li},
author = {D.H. Luecking},
title = {Schatten class of Hankel and Toeplitz
operators on the Bergman space of strongly pseudoconvex domains},
book = {
title = {Multivariable operator theory (Seattle, WA, 1993).}
},
series = {Contemp. Math.},
volume = {185},
publisher = {Amer. Math. Soc., Providence, RI},
year = {1995},
pages={237--257}
}

\bib{LiTz}{book}{
AUTHOR = {J. Lindenstrauss},
AUTHOR = {L. Tzafriri},
TITLE = {Classical Banach spaces. I and II},
PUBLISHER = {Springer-Verlag, Berlin},
YEAR = {1996}
}

\bib{Luecking}{article}{
AUTHOR = {D. Luecking},
     TITLE = {A technique for characterizing {C}arleson measures on
              {B}ergman spaces},
   JOURNAL = {Proc. Amer. Math. Soc.},
    VOLUME = {87},
      YEAR = {1983},
    NUMBER = {4},
     PAGES = {656--660},
      ISSN = {0002-9939},
       DOI = {10.2307/2043353},
       URL = {https://doi.org/10.2307/2043353},
}

\bib{O}{article}{
author = {V.L. Oleinik},
title = {Embeddings theorems for weighted classes of harmonic and analytic functions},
journal = {J. Soviet Math.},
volume = {9},
year = {1978},
pages = {228--243}
}
\bib{OP}{article}{
author = {V.L. Oleinik},
author = {B.S. Pavlov},
title = {Embedding theorems for weighted classes of harmonic and analytic functions},
journal = {J. Soviet Math.},
volume = {2},
year = {1974},
pages = {135--142}
}

\bib{PZ}{article}{
author = {J. Pau},
author = {R. Zhao},
title = {Carleson measures and Toeplitz operators for weighted Bergman spaces on the unit ball},
journal = {Michigan Math. J.},
volume = {64},
year = {2015},
pages = {759--796},
}

\bib{R}{book}{
    AUTHOR = {R.M. Range},
     TITLE = {Holomorphic functions and integral representations in several
              complex variables},
    SERIES = {Graduate Texts in Mathematics},
    VOLUME = {108},
 PUBLISHER = {Springer-Verlag, New York},
      YEAR = {1986},
     PAGES = {xx+386},
      ISBN = {0-387-96259-X},
       DOI = {10.1007/978-1-4757-1918-5},
       URL = {https://doi.org/10.1007/978-1-4757-1918-5},
}

\bib{ScVa}{article}{
author = {A.P. Schuster},
author = {D. Varolin},
title = {Toeplitz operators and Carleson measures on generalized Bargmann-Fock spaces},
journal = {Integral Equations Operator Theory},
volume = {72},
year = {2012},
pages = {363--392}
}

\bib{Zh1}{article}{
author = {K. Zhu},
title = {Positive Toeplitz operators on weighted Bergman spaces of bounded symmetric domains},
journal = {J. Operator Theory},
volume = {20},
year = {1988},
pages = {329--357}
}

\bib{Zh}{book}{
author = {K. Zhu},
title = {Spaces of holomorphic functions in the unit ball},
series = {Graduate Texts in Mathematics},
volume = {226},
publisher = {Springer-Verlag, New York},
year = {2005},
pages = {x+271}
}

  \end{biblist}
\end{bibdiv}

\end{document}